\newtheorem{definition}{Definition}[section]
\newtheorem{theorem}[definition]{Theorem}
\newtheorem{lemma}[definition]{Lemma}
\newtheorem{proposition}[definition]{Proposition}
\newtheorem{remark}{Remark}
\def \QQ{ {\mathcal{S}}}
\newcommand{\qn}{\hat{q}_n}
\newcommand{\qj}{\hat{q}_j}
\newcommand{\qnj}{\hat{q}_{n-j}}
\def\CF { \mathcal{F}}
\def\CB {\mathcal{B}}
\def\CI { \mathcal{I}}
\def\CI {\mathcal{I}}
\def\CG {\mathcal{G}}
\def\W {\mathcal{W}}
\def\b {\beta}
\def\a {\alpha}
\def\d {\delta}
\def\s {\sigma}
\def\g {\gamma}
\def\eps {\epsilon}
\def\n {\nu}
\def\a {\alpha}
\def\g {\gamma}
\def\z {\zeta}
\def\lm {\lambda}
\def\J {\mathbb{I}}
\def\E {\mathbb{E}}
\def\RE {\mathbb{R}}
\def\setN {\mathbb{N}}
\renewcommand{\Re}{\mathbb{R}}
\def \fh {\widehat{h}}
\begin{document}

\title{Central limit theorem for a class of one-dimensional kinetic equations}

\author{Federico Bassetti}

\email{federico.bassetti@unipv.it}

\address{Università degli Studi di Pavia \\
via Ferrata 1, 27100, Pavia, Italy}

\author{Lucia Ladelli}

\email{lucia.ladelli@polimi.it}

\address{Politecnico di Milano\\
        piazza Leonardo da Vinci 32, 20133, Milano, Italy \\
        {\rm also affiliated to CNR-IMATI Milano, Italy}}

\author{Daniel Matthes}
\email{matthes@asc.tuwien.ac.at}
\address{Institut für Analysis und Scientific Computing \\
           Technische Universität Wien \\
        Wiedner Hauptstraße 8--10 \\
           1040 Wien, Austria}

\subjclass{Primary: 60F05; Secondary:82C40}
\keywords{Central limit theorem; Boltzmann equation; Domains of normal attraction; Kac model; 
Smoothing transformations; Stable law; Sums of weighted independent random variables}

\begin{abstract}
  We introduce a class of Boltzmann equations on the real line,
  which constitute extensions of the classical Kac caricature.
  The collisional gain operators are defined by smoothing transformations 
  with quite general properties.
  By establishing a connection to the central limit problem,
  we are able to prove long-time convergence of the equation's 
  solutions towards a limit distribution.
  If the initial condition for the Boltzmann equation
  belongs to the domain of normal attraction of a certain stable law $\nu_\alpha$,
  then the limit is a scale mixture of $\nu_\alpha$.
  Under some additional assumptions, 
  explicit exponential rates for the convergence to equilibrium in Wasserstein metrics are calculated,
  and strong convergence of the probability densities is shown.
\end{abstract}

\maketitle

\section{Introduction}

In a variety of recent publications, 
intimate relations between the central limit theorem of probability theory
and the celebrated Kac caricature of the Boltzmann equation from statistical physics
have been revealed.
The idea to represent the solutions of the Kac equation in a probabilistic way
dates back at least to the works of McKean in the 60's, see e.g. 
\cite{McKean1966},
but has been fully formalized and employed in the derivation of 
analytic results 
only in the last decade.
For instance, probabilistic methods have been used to get 
estimates on the quality of approximation of solutions by truncated 
Wild sums in \cite{CarlenCarvalhoGabetta2000}, to 
study necessary and sufficient 
conditions for the convergence to a steady state in
\cite{GabettaRegazziniCLT}, to study the blow-up behavior of solutions of infinite energy 
in \cite{Carlen2007,CarlenGabettaRegazzini2008}, 
to obtain rates of convergence to equilibrium 
of the solutions both in strong and weak metrics, 
\cite{GabettaRegazziniWM,dolera,dolera2}.
The power of the probabilistic approach is illustrated, for instance, 
by the fact that in \cite{dolera} very refined estimates 
for the classical central limit theorem
enabled the authors to deliver the first proof of a conjecture 
that has been formulated by McKean about fourty years ago. 

The applicability of probabilistic methods is not restricted 
to the classical Kac equation,
but extends to the inelastic Kac model,
proposed by \cite{PulvirentiToscani}.
In the inelastic model, the energy (second moment) of the 
solution is not conserved but dissipated,
and hence infinite energy is needed initially to obtain a non-trivial long-time limit.
In  \cite{bassettiladelliregazzini} probabilistic methods have been
used to study the speed of approach to equilibrium under the assumption
that the initial condition belongs to the domain of normal 
attraction of a suitable stable law. In this context, indeed, the steady states
are the corresponding stable laws. 

In the current paper, we continue in the spirit of the aforementioned results.
By means of  the central limit theorem for triangular arrays,
we are able to study the long time behavior  of solutions 
of a wide class of one-dimensional Boltzmann equations,
which contains (essentially) the classical and 
the inelastic Kac model as special cases.

To be more specific,
recall that the Kac equation describes the evolution of a time-dependent
probability measure $\mu(t)$ on the real axis,
and is most conveniently written as an evolution equation for the characteristic function $\phi(t)$ of $\mu(t)$.
The equation has the form
\begin{equation}
  \label{eq.boltzivp}
  \left\{ \begin{aligned}
      & \partial_t\phi(t;\xi)+\phi(t;\xi) = 
      \widehat{Q}^+[\phi(t;\cdot),\phi(t;\cdot)](\xi) 
       \qquad(t>0, \xi \in  \RE)\\
      & \phi(0;\xi)=\phi_0(\xi) \\
    \end{aligned}
  \right.
\end{equation}
where the collisional gain operator is given by
\begin{align}
  \label{eq.collop}
  \widehat{Q}^+[\phi(t;\cdot),\phi(t;\cdot)](\xi) 
  := \E[\phi(t;L\xi)\phi(t;R\xi)].
\end{align}
Above, $(L,R)$ is a random vector defined on a probability 
space $(\Omega,\CF,P)$ and $\E$ denotes the expectation 
with respect to $P$. 
The initial condition $\phi_0$ is the characteristic function of a prescribed 
real random variable $X_0$; by abuse of notation, 
we shall also refer to $X_0$, to its probability distribution function $F_0$ 
or to its law $\mu(0)$ as the initial condition.

For the classical Kac equation, one writes
$(L,R)=(\sin(Þ),\cos(Þ))$, with
$Þ$ uniformly distributed on $[0,2\pi)$, 
and hence $L^2+R^2=1$ a.s.
The inelastic Kac equation is obtained by
\[
(L,R)=(\sin(Þ)|\sin(Þ)|^p,\cos(Þ)|\cos(Þ)|^p),
\] 
with $p>0$, and hence
$|L|^\a+|R|^\a=1$ a.s., { if} $\a=2/(1+p)$.
%
%
It is worth recalling that the study of the respective initial value problems 
can be reduced to the study of the same problems under the additional assumption 
that the initial distribution is symmetric, 
i.e. the initial 
characteristic function is real, and $(L,R)=(|\sin(Þ)|^{1+p}, 
|\cos(Þ)|^{1+p})$. See Section \ref{sct.examples}.

In this paper, we consider the problem \eqref{eq.boltzivp},
where the random { variables} $L$ and $R$ 
in the definition of the collision operator in \eqref{eq.collop} 
are non-negative and satisfy the condition
\begin{align}
  \label{eq.alpha}
  \E[L^\alpha+R^\alpha] = 1 ,
\end{align}
for some $\a$ in $(0,2]$.
The therewith defined bilinear operators $\widehat{Q}^+$
are examples of smoothing transformation, 
which have been extensively studied in the context of branched random walks,
see e.g. \cite{Kahane1976,DurrettLiggett1983,Guivarch1990,Liu1998,Iksanov} and the 
references therein.

Our motivation, however,  originates from applications to statistical physics.
These applications are discussed in Section \ref{sct.examples}.
At this point, we just mention the two main examples.
\begin{enumerate}
\item 
  Passing from the Kac condition $L^2+R^2=1$ to \eqref{eq.alpha} with $\a=2$,
  the model retains its crucial physical property to conserve the 
  second moment of the solution.
  However, the variety of possible steady states grows considerably:
  depending on the law of $(L,R)$, the latter may exhibit heavy tails.
\item
  For certain distributions satisfying \eqref{eq.alpha} with $\a=1$,
  equation \eqref{eq.boltzivp} has been used to model the redistribution of wealth in simplified market economies,
  which conserve the society's total wealth (first moment).
  Whereas the condition $L+R=1$ would correspond to deterministic trading 
  and lead eventually to a fair but unrealistic distribution of wealth 
  in the long time limit,
  the relaxed condition \eqref{eq.alpha} allows trade mechanisms that involve randomness
  (corresponding to risky investments)
  and lead to a realistic, highly unequal distribution of wealth.
\end{enumerate}


Our main results from Theorems \ref{thm2} and \ref{thm3-bis} can be rephrased as follows:

\vskip 0.5cm
%
 {\it  Assume that \eqref{eq.alpha} holds with $\a\in(0,2]$, but $\a\neq1$,
  and in addition that $\E[L^\gamma+R^\gamma]<1$ for some $\gamma>\a$.
  Let $\mu(t)$, for $t\geq0$, be the probability measure on $\RE$ 
  such that its characteristic function $\phi(t)$ is the unique 
   solution to the associated Boltzmann equation \eqref{eq.boltzivp}.
  Assume further that the initial datum $\mu(0)$ lies in the normal domain of attraction of some $\alpha$-stable law $\nu_\alpha$,
  and that $\mu(0)$ is centered if $\a>1$.
  Then, as $t\to + \infty$, the probability  measures $\mu(t)$ converge weakly to a 
   limit distribution $\mu_\infty$,
  which is a non-trivial scale mixture of  $\nu_\alpha$.
}
%

\vskip 0.5cm

The results in the case $\a=1$ are more involved;
see Theorems \ref{thm1} and \ref{thm2-bis}.

Under the previous  general hypotheses,
no more than weak convergence can be expected.
However, slightly more restrictive assumptions on the initial condition $\phi_0$ suffice
to obtain exponentially fast convergence in some Wasserstein distance.
Finally, if the initial condition possesses a density with finite {Linnik-Fisher functional}  
and the condition $L^r+R^r\geq1$ holds a.s. for some $r>0$,
then the probability density of $\mu(t)$ exists for every $t>0$ and converges strongly 
in the Lebesgue spaces $L^1(\RE)$ and $L^2(\RE)$ as $t \to +\infty$.

The largest part of the paper deals with the proofs of weak convergence,
which are obtained in application of the central limit theorem for triangular arrays.
Consequently, the core element of the proof is 
to establish a suitable probabilistic interpretation 
of the solution to \eqref{eq.boltzivp}.
The link to probability theory is provided by a semi-explicit solution formula:
the Wild sum,
\begin{align}
  \label{eq.wild0}
  \phi(t) = e^{-t} \sum_{n=0}^\infty (1-e^{-t})^n \qn ,
\end{align}
represents the solution $\phi(t)$ as a convex combination of characteristic functions $\qn$,
which are obtained by iterated application of the gain operator $\widehat{Q}^+$ 
to the initial condition $\phi_0$ --- see formula \eqref{eq.wildrec}. 
Following \cite{GabettaRegazziniCLT} we consider a sequence of random variables $W_n$ 
such that $W_n$ has $\hat q_{n-1}$ as its characteristic function,
and possesses the representation
\begin{align}
  \label{eq.defw}
  W_n = \sum_{j=1}^n \b_{j,n}X_j ,
\end{align}
where the $X_j$ are independent and identically distributed random variables 
with common characteristic function $\phi_0$.
The weights $\b_{j,n}$ are random variables themselves
and are obtained in a recursive way, see \eqref{recursion}.

The behavior of $\phi(t)$ in \eqref{eq.wild0} as $t\to\infty$ is obviously determined
by the behavior of the law of $W_n$ as $n\to\infty$.
It is important to note that a direct application of the central limit theorem 
to the study of $W_n$ 
is inadmissible since the weights in (\ref{eq.defw}) are not independent.
However, one can apply the central limit theorem
to study  the conditional law of $W_n$, given the array 
of weights $\b_{j,n}$.
 

Representations in the form \eqref{eq.wild0} with \eqref{eq.defw} 
are known for the (classical and inelastic) Kac equation, 
see \cite{GabettaRegazziniCLT} and \cite{bassettiladelliregazzini}.
The situation here is more involved, 
since \eqref{eq.alpha} only implies that
\begin{align}
  \label{eq.rich}
  \E\big[\b_{1,n}^\a+\b_{2,2}^\a+\cdots+\b_{n,n}^\a\big] = 1,
\end{align}
whereas for the Kac equation,
\begin{align}
  \label{eq.poor}
  \b_{1,n}^\a+\b_{2,n}^\a+\cdots+\b_{n,n}^\a = 1 \quad a.s.
\end{align}
In order to be able to apply the central limit theorem,
one needs to prove that $\max_{1 \leq j \leq n}|\b_{j,n}|$ converges in probability to zero,
and that $\sum_{j} \b_{j,n}^\a$ converges (almost surely) to a random variable. 
Thanks to (\ref{eq.poor}), 
the latter condition is immediately satisfied for the Kac equation, 
while it is not always true for the general model considered here. 
We stress that the generality of \eqref{eq.rich} in comparison to \eqref{eq.poor} 
is the origin of the richness of possible steady states in \eqref{eq.boltzivp}.

The paper is organized as follows.
In Section \ref{sct.boltzmann}, 
we recall some basic facts about the Boltzmann equation under consideration,
present a couple of examples to which the theory applies,
and derive the stochastic representation of solutions.
Section \ref{sct.results} contains the statements of our main theorems.
The results are classified into those on convergence in distribution (Section \ref{sct.weak}),
convergence in Wasserstein metrics at quantitative rates (Section \ref{sct.wasserstein})
and strong convergence of the probability densities (Section \ref{sct.strong}).
All proofs are collected in Section \ref{sct.proofs}.

\section{Examples and preliminary results}
\label{sct.boltzmann}


One-dimensional kinetic equations of type \eqref{eq.boltzivp}-\eqref{eq.collop}, 
like the Kac equation and its variants, 
provide simplified models for a spatially homogeneous gas,
in which particle move only in one spatial direction.
The measure $\mu(t)$, whose characteristic function is the solution of (\ref{eq.boltzivp}), 
describes the probability distribution of the velocity of a molecule at time $t$. 
%
%
The basic assumption is that particles 
change their velocities only because of binary collisions.
When two particles collide, 
then their velocities change from $v$ and $w$, respectively, to
\begin{align}
  \label{eq.collision}
  v' = L_1 v + R_1 w \quad\mbox{and}\quad w' = R_2 v + L_2 w 
\end{align}
with $L_1=L_2=\sin(Þ)$ and $R_1=-R_2=\cos(Þ)$.

More generally, one can consider binary interaction obeying (\ref{eq.collision}),
where $(L_1,R_1)$ and $(L_2,R_2)$ are two identically distributed random vectors 
(not necessarily independent)
with the same law of $(L,R)$.
This leads, at least formally, to equation (\ref{eq.boltzivp}).

\subsection{Examples}
\label{sct.examples} 

The following applications are supposed to serve to motivate the study of
the Boltzmann equation \eqref{eq.boltzivp} with the condition \eqref{eq.alpha}.
The first two examples are taken from gas dynamics,
while the third originates from econophysics.

\subsubsection*{Kac like models} 
Instead of discussing the physical relevance of the Kac model, 
we simply remark that it constitutes the most sensible one-dimensional caricature 
of the Boltzmann equation for elastic Maxwell molecules in three dimensions.
A comprehensive review on the mathematical theory of the latter
is found e.g. in \cite{bobylev}.
The term ``elastically'' refers to the fact that 
the kinetic energy of two interacting molecules
-- which is proportional to the square of the particles' velocities --
is preserved in their collisions.
Indeed, since $L_1=L_2=\sin(Þ)$ and $R_1=-R_2=\cos(Þ)$,
one obtains $(v')^2+(w')^2=v^2+w^2$.

We shall not detail any of the numerous results 
available in the extensive literature on the Kac equation,
but simply summarize some basic properties that are connected with our investigations here.
First, we remark that
the microscopic conservation of the particles' kinetic energy 
implies the conservation of the average energy, 
which is the second moment of the solution $\mu(t)$ to \eqref{eq.boltzivp}.
Moreover, 
it is easily proven that the average velocity, i.e. the first moment of $\mu(t)$, 
converges to zero exponentially fast.
For $t\to +\infty$, the solution $\mu(t)$ converges weakly to a Gaussian measure
that is determined by the conserved  second moment.

As already mentioned in the introduction, the study of the original Kac model
can be reduced to the study of a particular case of the model we are considering. 
Indeed, it is well-known that the solution of the Kac equation can be written
as
\begin{align}
\phi(t,\xi)= e^{-t} Im \big(\phi_0(\xi)\big) + \phi^*(t,\xi)
\end{align}
where $\phi^*$ is the solution to problem (\ref{eq.boltzivp}) with $Re(\phi_0)$ in the place of $\phi_0$, $L=|\sin(Þ)|$ and  $R=|\cos(Þ)|$. 
Hence, 
we can invoke Theorem \ref{thm3-bis},
which provides another proof of the large-time convergence of solutions 
$\mu(t)$ to a Gaussian law.
In fact, also Theorem \ref{thm.strongnew} is applicable,
which shows that the densities of $\mu(t)$ converge in $L^1(\Re)$ and $L^2(\Re)$,
provided $\mu(0)$ possesses a density with finite Linnik functional.

These consequences are weak in comparison to the various extremely refined
convergence estimates for the solutions to the Kac equation available in the literature. 
See, e.g., the review \cite{RegazziniUMI}.
On the other hand, our proofs do not rely on any of the symmetry properties
that are specific for the Kac model.
Thus, our aforementioned results extend --- word by word --- 
to the wide class of problems \eqref{eq.boltzivp}-\eqref{eq.collop} with $L^2+R^2=1$ a.s.

The variant of the Kac equation, introduced in \cite{PulvirentiToscani},
is called inelastic because the total kinetic energy 
of two colliding particles is not preserved in the collision mechanism,
but decreases in general.
Consequently, if the second moment of the initial condition $\mu(0)$ is finite,
then the second moment of the solution $\mu(t)$ converges to zero exponentially fast in $t>0$.
Non-trivial long-time limits are thus necessarily obtained 
from initial conditions with infinite energy.
In \cite{bassettiladelliregazzini}, it is shown that
the solution $\mu(t)$ converges weakly to a $\a$-stable law $\nu_\alpha$
if $\mu(0)$ belongs to the normal domain of attraction of $\nu_\alpha$,

As for the Kac model, the study of the inelastic  Kac model too 
can be reduced to the framework of the present paper. 
Hence, Theorem \ref{thm.strongnew} and Proposition \ref{PropW-2}
yield new results concerning strong convergence of densities in $L^1(\RE)$ and
convergence with respect to the Wasserstein metrics.

\subsubsection*{Inelastic Maxwell molecules}
We shall now consider a variant of the Kac model in which
the energy is {\em not} conserved in the individual particle collisions,
but gains and losses balance in such a way that 
the average kinetic energy is conserved.
This is achieved by relaxing the condition $L^2+R^2=1$ to $\E[L^2+R^2]=1$, 
which is \eqref{eq.alpha} with $\a=2$.

Just as the Kac equation is a caricature of the Boltzmann equation for elastic Maxwell molecules,
the model at hand can be thought of as a caricature 
of a Boltzmann equation for {\em inelastic} Maxwell molecules in three dimensions.
For the definition of the corresponding model, its physical justification, 
and a collection of relevant references, see \cite{carrillo}.
We stress, however, that the Kac caricature of inelastic Maxwell molecules
is not the same as the inelastic Kac model from the preceeding paragraph. 

Conservation of the total energy can be proven  for centered solution $\mu(t)$;
like symmetry, also centering is propagated from $\mu(0)$ to any $\mu(t)$ by \eqref{eq.boltzivp}.
The argument leading to energy conservation is given in the remarks following Theorem \ref{thm3-bis}.

Relaxation from strict energy conservation to conservation in the mean
affects the possibilities for the large-time dynamics of $\mu(t)$.
It follows from Theorem \ref{thm3-bis} that if $\E[L^\gamma+R^\gamma]<1$ for some $\gamma>2$, 
then any solution $\mu(t)$,
which is centered and of finite second moment initially,
converges weakly to a non-trivial steady state $\mu_\infty$.
However, unless $L^2+R^2=1$ a.s., $\mu_\infty$ is {\em not} a Gaussian.
In fact, $(L,R)$ can be chosen in such a way that $\mu_\infty$
possesses only a finite number of moments.
In physics, 
such velocity distributions are referred to as ``high energy tailed'', 
and typically appear when the molecular gas is connected to a thermal bath.

An example leading to high energy tails is the following:
let $(L,R)$ such that $P\{L=1/2\}=P\{L=\sqrt{5}/2\}=1/2$ and $P\{R=1/2\}=1$.
One verifies that $\E[L^2+R^2]=1$ and $\E[L^4+R^4]=7/8<1$,
so Theorem \ref{thm3-bis} guarantees the existence of a non-degenerate steady state $\mu_\infty$.
Moreover, $\E[L^6+R^6]=1$, 
and one concludes further from Theorem \ref{thm3-bis}
that the sixth moment of $\mu_\infty$ diverges, whereas all lower moments are finite.

\subsubsection*{Wealth distribution}
Recently, 
an alternative interpretation of the equation \eqref{eq.boltzivp} 
has become popular.
The homogeneous gas of colliding molecules is replaced 
by a simple market with a large number of interacting agents.
The current ``state'' of each individual is characterized by a single number, 
his or her wealth $v$.
Correspondingly, the measure $\mu(t)$ represents the distribution of wealth among the agents.
The collision rule \eqref{eq.collision} describes how wealth is exchanged between agents
in binary trade interactions. See, e.g., \cite{slanina,PareschiToscani}.

Typically,
it is  assumed that $\mu(t)$ is supported on the positive semi-axis.
In fact, the first moment of $\mu(t)$ represents the total wealth of the society
and plays the same rôle as the energy in the previous discussion.
In particular, it is conserved by the evolution.

In the first approaches, see e.g. \cite{angle},
conservation of wealth in each trade was required,
i.e. $v'+w'=v+w$. Hence, assuming $L_1=L_2$ and $R_1=R_2$ 
in \eqref{eq.collision}, this yields $L+R=1$ a.s.
However, the obtained results were unsatisfactory:
in the long time limit, 
the wealth distribution $\mu(t)$ concentrates on the society's average wealth,
so that asymptotically, all agents possess the same amount of money.
This also follows from our Theorem \ref{thm1}.

More realistic results have been obtained by \cite{MatthesToscani}, 
where trade rules $(L,R)$ have been introduced 
that satisfy \eqref{eq.alpha} with $\a=1$, but in general $P\{L+R =1\}<1$.
Thus wealth can be increased or diminished in individual trades,
but the society's total wealth, i.e. the first moment of $\mu(t)$, remains constant in time.
The proof of conservation of the mean wealth can also be found 
in the remarks after Theorem \ref{thm1}.

A typical example for trade rules is the following.
Let $L_1=L_2$ and $R_1=R_2$ with $P\{L=1-p+r\}=P\{L=1-p-r\}=1/2$ and $P\{R=p\}=1$,
where $p$ in $(0,1)$ is a relative price of an investment and $r$ in $(0,p)$ is a risk factor.
The interpretation reads as follows:
each of the two interacting agents buys from the other one some risky asset 
at the price of the $p$th fraction of the respective buyer's current wealth;
these investments either pay off and produce some additional wealth, or lose value,
both proportional (with $r$) to their original price.
Over-simplified as this model might be, 
it is able to produce (for suitable choices of $p$ and $r$) steady distributions $\mu_\infty$ 
with only finitely many moments, 
as are typical wealth distributions for western countries; 
see \cite{MatthesToscani} for further discussion.

An example is provided by choosing $p=1/4$ and $r=1/2$.
One easily verifies that $\E[L+R]=1$, $\E[L^2+R^2]=7/8<1$ and $\E[L^3+R^3]=1$.
By Theorem \ref{thm1}, 
it follows that there exists a non-degenerate steady distribution $\mu_\infty$
that possesses all moments up to the third, 
whereas the third moment diverges.

\subsection{Probabilistic representation of the solution}

{As already mentioned, a convenient way to represent the solution $\phi$ to the problem \eqref{eq.boltzivp}
is
\begin{equation}
  \label{eq.wildsum}
  \phi(t;\xi)=\sum_{n=0}^\infty e^{-t}(1-e^{-t})^{n}\qn(\xi) \qquad(t\geq0, \xi \in \RE)
\end{equation}
where $\qn$ is recursively defined by
\begin{equation}
  \label{eq.wildrec}
  \left \{
    \begin{array}{ll}
      \hat q_0(\xi):=\phi_0(\xi)& \quad \\
      \hat q_{n}(\xi):= \frac{1}{n} \sum_{j=0}^{n-1} \E[\hat q_j(L\xi)\hat q_{n-1-j}(R\xi)]
      & \qquad(n=1,2,\dots).\\
    \end{array}
  \right .
\end{equation}
The series in \eqref{eq.wildsum} is referred to as {\em Wild sum}, 
since the representation (\ref{eq.wildsum}) has been derived in \cite{Wild1951} for the solution of the
 Kac equation. }
In this section, we shall rephrase  the Wild sum in a probabilistic way. 
The idea goes back to \cite{McKean1966,McKean1967},
where McKean relates the Wild series to a random walk on a
class of binary trees, the so--called McKean trees. 
%
It is not hard to verify that each of the expressions $\qn$ in the Wild series
is indeed a characteristic function.
Now, following \cite{GabettaRegazziniCLT}, we shall 
define a sequence of random variables $W_n$ 
such that $\hat q_{n-1}(\xi)=\E[e^{i\xi W_n}]$.

On a sufficiently large probability space $(\Omega,\CF,P)$,
let the following be given:
\begin{itemize}
\item a sequence $(X_n)_{n\in\setN}$ of independent and identically distributed random variables 
with common distribution function  $F_0$;
\item a sequence $\big((L_n,R_n)\big)_{n\in\setN}$ of independent and identically distributed random 
vectors, distributed as $(L,R)$;
\item a sequence $(I_n)_{n\in\setN}$ of independent integer random variables,
  where each $I_n$ is uniformly distributed on the indices $\{1,2,\ldots,n\}$;
\item a stochastic process $(\nu_t)_{t\geq0}$ with values in $\setN$ 
 and $P\{\nu_t=n\}=e^{-t}(1-e^{-t})^{n-1}$.
\end{itemize}
We assume further that
\[ (I_n)_{n\geq 1}, \quad (L_n,R_n)_{n\geq 1}, \quad (X_n)_{n\geq 1} \quad\mbox{and} \quad (\nu_t)_{ t>0 } \]
are stochastically independent.
The random array of weights $[\b_{j,n}: j=1,\dots,n]_{n \geq 1}$ is recursively defined as follows:
\[
\beta_{1,1}:=1, \qquad (\beta_{1,2},\beta_{2,2}):=(L_1,R_1)
\]
and, for any $n\geq 2$,
\begin{equation}
  \label{recursion}
  (\beta_{1,n+1},\ldots,\beta_{n+1,n+1}) := (\beta_{1,n},\ldots,\beta_{I_n-1,n}, L_n \beta_{I_n,n}, R_n \beta_{I_n,n},\beta_{I_n+1,n},\ldots, \beta_{n,n}). 
\end{equation}
Finally set
\begin{align}
  \label{eq.defv}
  W_n:=\sum_{j=1}^n \beta_{j,n} X_j \quad
  \text{and} \quad
  V_t:=W_{\nu_t}=\sum_{j=1}^{\n_t} \beta_{j,\n_t} X_j.
\end{align}

\begin{figure}[ht]
  \centering
  \includegraphics [scale=0.6]{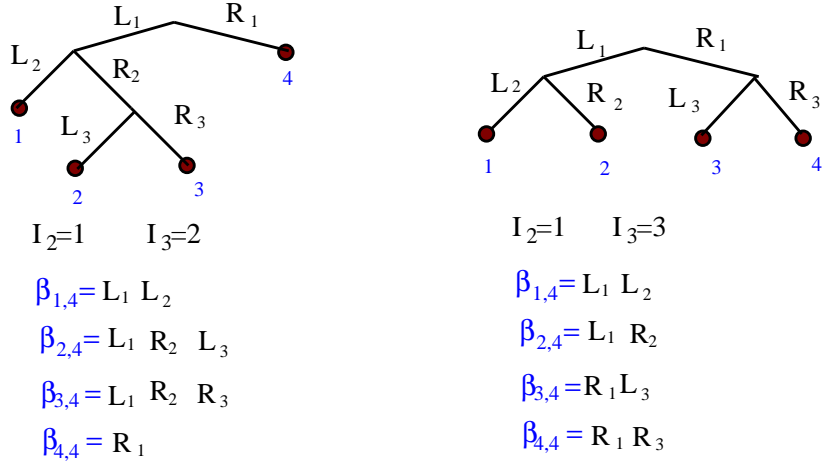}
  \caption{Two McKean trees, with associated weights $\b$.}
  \label{FigureA}
\end{figure}

There is a direct interpretation of this construction in terms 
of McKean trees.
For an introduction to McKean trees, see, e.g., \cite{CarlenCarvalhoGabetta2000}.
Each finite sequence $\CI_n=(I_1,I_2,\ldots,I_{n-1})$ corresponds to a McKean tree with $n$ leaves.
The tree associated to $\CI_{n+1}$ is obtained from the tree associated to $\CI_n$ upon
replacing the $I_n$-th leaf (counting from the left) by a binary branching with two new leaves.
The left of the new branches is labelled with $L_n$, and the right one with $R_n$.
Finally, the weights $\b_{j,n}$ are associated to the leaves of the $\CI_n$-tree;
namely, $\b_{j,n}$ is the product of the labels assigned to the branches 
along the ascending path connecting the $j$th leaf to the root.
The trees with $\CI_4=(1,1,2)$ and $\CI_4=(1,2,3)$, respectively, are 
displayed in Figure \ref{FigureA}.

In the Wild construction \eqref{eq.wildrec}, 
McKean trees with $n$ leaves are obtained by joining pairs of trees with $k$ and $n-k$ leaves,
respectively, at a new common root.
Wheras our construction produces the $n$ leaved trees from the $n-1$ leaved trees
replacing a leaf by a binary branching.
In a way, the second construction is much more natural --- or, at least, more biological!
The next proposition shows that both constructions indeed lead to 
the same result.

In the rest of the paper expectations with respect to $P$ 
will be denoted by $\E$.

\begin{proposition}[Probabilistic representation]\label{Prop:probint} 
Equation \eqref{eq.boltzivp} has a unique solution $\phi(t)$, which   
  coincides with the characteristic function of $V_t$, i.e.
  \begin{align*}
    \phi(t,\xi) = \E[e^{i \xi V_t} ] = \sum_{n=0}^\infty e^{-t}(1-e^{-t})^{n} \E[e^{i \xi W_{n+1}} ]
    \qquad (t >0, \, \xi \in \RE).
  \end{align*}
\end{proposition}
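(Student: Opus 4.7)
The plan has four stages: (i) establish existence and uniqueness of $\phi$ through the Duhamel form of \eqref{eq.boltzivp}; (ii) verify that the Wild sum \eqref{eq.wildsum}, with $\hat q_n$ defined by \eqref{eq.wildrec}, solves the IVP; (iii) prove by induction that $\hat q_n(\xi)=\E[e^{i\xi W_{n+1}}]$; and (iv) sum against the law of $\nu_t$ to obtain $\phi(t,\xi)=\E[e^{i\xi V_t}]$.

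For (i), I would rewrite \eqref{eq.boltzivp} as the integral equation
\begin{align*}
\phi(t,\xi)=e^{-t}\phi_0(\xi)+\int_0^t e^{-(t-s)}\widehat Q^+[\phi(s,\cdot),\phi(s,\cdot)](\xi)\,ds.
\end{align*}
The elementary bound $|ab-a'b'|\leq |a-a'|+|b-b'|$ on the closed unit disc shows that $\phi\mapsto\widehat Q^+[\phi,\phi]$ is globally $2$-Lipschitz in the sup norm on $\{|\phi|\leq 1\}$, so Gronwall's lemma yields uniqueness and a short-time contraction followed by iteration yields existence. For (ii), direct substitution of \eqref{eq.wildsum} into \eqref{eq.boltzivp}, with term-by-term differentiation (legitimate because the series is dominated by $e^{-t}\sum_n(1-e^{-t})^n=1$), reduces exactly to the recursion \eqref{eq.wildrec}.

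The core of the proof is (iii). The base case $n=0$ is immediate, since $W_1=X_1$ has characteristic function $\phi_0=\hat q_0$. For the inductive step I would invoke the following combinatorial fact about the construction \eqref{recursion}: the size $J$ of the left subtree at the root of the random $(n+1)$-leaf McKean tree is uniformly distributed on $\{1,\dots,n\}$, and conditionally on $J=j+1$ the two subtrees hanging from the root are independent, distributed as the $(j+1)$- and $(n-j)$-leaf constructions built from fresh, mutually independent ingredients $(X_k),(L_k,R_k),(I_k)$; the root carries a fresh, independent copy $(L,R)$ of $(L_1,R_1)$. This claim is established by its own short induction, using that at each insertion any present leaf is selected with equal probability $1/k$ -- a standard Polya-urn / Yule-process argument. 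Granted the claim,
\begin{align*}
W_{n+1}\stackrel{d}{=}L\,W'_{J}+R\,W''_{n+1-J}
\end{align*}
with all factors independent; taking expectations and applying the inductive hypothesis to both sides gives
\begin{align*}
\E[e^{i\xi W_{n+1}}]=\frac1n\sum_{j=0}^{n-1}\E[\hat q_j(L\xi)\hat q_{n-1-j}(R\xi)]=\hat q_n(\xi)
\end{align*}
by \eqref{eq.wildrec}. Finally for (iv), independence of $\nu_t$ from $(X_k,L_k,R_k,I_k)$ together with $P\{\nu_t=n\}=e^{-t}(1-e^{-t})^{n-1}$ gives $\E[e^{i\xi V_t}]=\sum_{n=0}^\infty e^{-t}(1-e^{-t})^n\hat q_n(\xi)$, which equals $\phi(t,\xi)$ by (i)--(ii). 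The hard part is the uniform subtree-size lemma powering step (iii); once it is in hand, the remaining manipulations are routine.
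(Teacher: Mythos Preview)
Your proposal is correct and follows essentially the same route as the paper. The paper also reduces everything to the inductive identity $\hat q_{\ell-1}(\xi)=\E[e^{i\xi W_\ell}]$, and proves it via exactly the decomposition you describe: defining $K_n$ as the size of the left subtree at the root, showing $K_n$ is uniform on $\{1,\dots,n-1\}$ (the paper cites Lemma~2.1 of \cite{CarlenCarvalhoGabetta2000} rather than invoking a P\'olya/Yule argument), and observing that, conditionally on $K_n$, the left and right sums $A_{K_n}=\sum_{j\le K_n}(\beta_{j,n}/L_1)X_j$ and $B_{K_n}=\sum_{j>K_n}(\beta_{j,n}/R_1)X_j$ are independent of each other and of $(L_1,R_1)$, with the laws of $W_{K_n}$ and $W_{n-K_n}$ respectively. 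One small wording point: the root does not carry a \emph{fresh} copy of $(L,R)$ but literally $(L_1,R_1)$; what you need (and what the paper uses) is that $(L_1,R_1)$ is conditionally independent of the two subtree sums given $K_n$, which is immediate since $(L_1,R_1)$ is independent of all the $I_k$. Your parts (i)--(ii) are more self-contained than the paper, which simply cites \cite{Wild1951,McKean1966,Sznitman} for existence and uniqueness of the Wild sum solution; otherwise the arguments coincide.
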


\begin{proof} 
  The respective proof for the Kac case is essentially already 
  contained in \cite{McKean1966}.
  See   \cite{GabettaRegazziniCLT} for a more complete proof. 
  Here, we extend the argument to the problem \eqref{eq.boltzivp}.
  First of all it is easy to prove, following \cite{Wild1951} and \cite{McKean1966},
   that formulas \eqref{eq.wildsum} and (\ref{eq.wildrec})
  produce the unique solution to problem \eqref{eq.boltzivp}. See also  \cite{Sznitman}.
  Hence, comparing the Wild sum representation \eqref{eq.wildsum}
  and the definition of $V_t$ in \eqref{eq.defv}, 
  it obviously suffices to prove that
  \begin{equation}
    \label{ind}
    \hat q_{\ell-1}(\xi)= \E[e^{i \xi W_\ell} ],
  \end{equation}
  which we will show by induction on $\ell\geq1$.
  First, note that $ \E [\exp(i \xi W_1) ]=\E [\exp(i\xi X_1)]=\phi_0(\xi)=\hat q_0(\xi)$ and
  \begin{align*}
    \E [e^{i \xi W_2} ]  =   
    \E [e^{i\xi(L_1 X_1+R_1 X_2)}] =   \E [\E [e^{i\xi(L_1 X_1+R_1X_2)}|L_1,R_1]]=\hat q_{1}(\xi),
  \end{align*}
  which shows \eqref{ind} for $\ell=1$ and $\ell=2$.
  Let $n\geq3$, and assume that \eqref{ind} holds for all $1\leq\ell<n$;
  we prove \eqref{ind} for $\ell=n$.

  Recall that the weights $\b_{j,n}$ are products of random variables $L_i$ and $R_i$.
  By the recursive definition in \eqref{recursion},
  one can define a random index $K_n<n$ such that
  all products $\b_{j,n}$ with $j \leq K_n$ contain $L_1$ as a factor, 
  while the remaining products $\b_{j,n}$ with $K_n+1\leq j \leq n$ 
   contain $R_1$.
  (In terms of McKean trees, 
  $K_n$ is the number of leaves in the left sub-tree, 
  and $n-K_n$ the number of leaves in the right one.)
  By induction it is easy to see that
  \[
  P\{K_n=i\}=\frac{1}{n-1} \qquad i=1,\dots,n-1;
  \]
  c.f. Lemma 2.1 in \cite{CarlenCarvalhoGabetta2000}.
  Now, 
  \[
  A_{K_n} := \sum_{j=1}^{K_n}\frac{\b_{j,n}}{L_1} X_j, 
  \quad  
  B_{K_n} := \sum_{j=K_n+1}^{n}\frac{\b_{j,n}}{R_1} X_j
  \quad \text{and} \quad (L_1,R_1)
  \] 
  are conditionally independent given $K_n$. 
  By the recursive definition of the weights $\b_{j,n}$ in \eqref{recursion},
  the following is easily deduced:
  the conditional distribution of $A_{K_n}$, given $\{K_n=k\}$,
  is the same as the (unconditional) distribution of $\sum_{j=1}^{k}\b_{j,k} X_j$,
  which clearly is the same distribution as that of $W_k$.
  Analogously, the conditional distribution of $B_{K_n}$, given  $\{K_n=k\}$, 
  equals the distribution of $\sum_{j=1}^{n-k}\b_{j,n-k}X_j$,
  which further equals the distribution of $W_{n-k}$.
  Hence, 
  \begin{align*}
    \E[e^{i\xi W_n}] &= \frac1{n-1} \sum_{k=1}^{n-1} \E\big[e^{i\xi(L_1A_k+R_1B_k)}\big|\{K_n=k\}\big] \\
    &= \frac1{n-1} \sum_{k=1}^{n-1} \E\big[\E[e^{i\xi L_1W_k}|L_1,R_1]\E[e^{i\xi R_1W_{n-k}}|L_1,R_1]\big] \\
    &= \frac1{n-1} \sum_{k=1}^{n-1} \E[\hat q_{k-1}(L_1\xi)\hat q_{n-k-1}(R_1\xi)] 
    = \frac1{n-1} \sum_{j=0}^{n-2} \E[\hat q_{n-2-j}(L_1\xi)\hat q_{j}(R_1\xi)]
   \end{align*}
  which is $\hat q_{n-1}$ by the recursive definition in \eqref{eq.wildrec}.
\end{proof}

\section{Convergence results}
\label{sct.results}

In order to state our results
we need to review some elementary facts about the central limit 
theorem for stable distributions.
Let us recall that a probability distribution is said to be 
{\em a centered stable law} of exponent $\a$ (with $0 < \a \leq 2$) if 
its characteristic function is of the form
\begin{equation}
  \label{chaSta}
  \hat g_\a(\xi)= 
  \left \{
    \begin{array}{ll}
      \exp\{ -k |\xi|^\a 
      (1-i \eta \tan(\pi\alpha/2)\operatorname{sign}\xi)   \} &  \text{if $\a \in (0,1) \cup (1,2)$}  \\
      \exp\{ -k |\xi| 
      (1+2i\eta/\pi \log|\xi| \operatorname{sign}\xi)   \} &  \text{if $\a=1$}  \\
      \exp\{ - \s^2 |\xi|^2/2 \} & \text{if $\a=2$.}
    \end{array}
  \right .
\end{equation}
where $k>0$ and $|\eta| \leq 1$.

By definition, a distribution function $F$  
belongs to the {\em domain of normal attraction} of 
a stable law of exponent $\a$ 
if for any sequence of independent and identically  distributed real-valued 
random variables $(X_n)_{n \geq 1}$ 
with common distribution function $F$, 
there exists a sequence of real numbers $(c_n)_{n \geq 1}$
such that the law of
\[
\frac{1}{n^{1/\a}} \sum_{i=1}^n X_i -c_n
\]
converges weakly to a stable law of exponent $\a \in (0,2]$.

It is well-known  that, provided $\a\not=2$,
$F$ belong to the domain of normal attraction of an $\a$-stable law  
if and only if $F$ satisfies 
\begin{equation}
  \label{stabledomain}
  \begin{split}
    \lim_{x \to +\infty} x^\a (1-F(x)) &=c^+<+\infty, \\
    \lim_{x \to -\infty} |x|^\a F(x) &=c^-<+\infty. \\
  \end{split}
\end{equation}
Typically, one also requires that $c^++c^->0$ in order to exclude 
convergence to the probability measure concentrated in $x=0$,
but here we shall include the situation $c^+=c^-=0$ as a special case.
The parameters $k$ and $\eta$ of the associated stable law 
in \eqref{chaSta}
are identified from $c^+$ and $c^-$ by
\begin{equation}
  \label{constant}
  k = (c^{+}+c^{-}) \frac{\pi}{2\Gamma(\a)\sin(\pi\a/2)}, 
  \qquad \eta = \frac{c^{+}-c^{-}  }{c^{+}+c^{-}}, 
\end{equation}
with the convention that $\eta=0$ if $c^++c^-=0$.
In contrast, if $\a=2$, 
$F$ belongs to the domain of normal attraction of a Gaussian law 
if and only if it has finite variance $\s^2$.

For more information on stable laws and central limit theorem see, 
for example, Chapter 2 of \cite{IbragimovLinnik1971} and 
Chapter 17 of \cite{fristedgray}.

\subsection{Convergence in distribution}
\label{sct.weak}

We return to our investigation of solutions to the initial value problem 
\eqref{eq.boltzivp}-\eqref{eq.collop}. 
For definiteness, let the two non-negative random variables $L$ and $R$,
which define the dynamics in \eqref{eq.collop}, be fixed from now on.
We assume that they satisfy
\begin{align}
  \label{eq.defalpha}
  \E[L^\a+R^\a] = 1 
\end{align}
for some number $\a\in(0,2]$.
We introduce the convex function $\QQ:[0,\infty)\to[-1,\infty]$ by
\begin{align*}
  \QQ (s)=\E[L^s+R^s]-1,
\end{align*}
where we adopt the convention $0^0=0$.
From \eqref{eq.defalpha} it follows that $\QQ(\a)=0$.
Recall that $F_0$ is the probability distribution function 
of the initial condition $X_0$ for \eqref{eq.boltzivp}, 
and its characteristic function is $\phi_0$.

The main results presented below 
show that if $F_0$ belongs to the domain of normal attraction of an $\alpha$-stable law,
then the solution $\phi(t;\cdot)$ to the problem \eqref{eq.boltzivp}-\eqref{eq.collop}
converges, as $t \to +\infty$,  
to the characteristic function of a mixture of stable distributions of exponent $\a$. 
The mixing distribution is given by the law of the limit for $n\to\infty$ of the random variables
\begin{align*}
  M_n^{(\a)}=\sum_{j=1}^n \b_{j,n}^\a ,
\end{align*}
which are defined in terms of the random weights defined in \eqref{recursion}.
The content of the following lemma is that $M_n^{(\a)}$ converges almost surely to a random 
variable  $M_\infty^{(\a)}$.
\begin{lemma}
  \label{Lemma2bis}
  Under condition \eqref{eq.defalpha},
  \begin{align}
    \label{eq.Misone}
    \E[M^{(\a)}_n] = \E[M^{(\a)}_{\n_t}]=1 \quad \mbox{for all $n\geq1$ and $t > 0$,}
  \end{align}
  and $M_n^{(\a)}$ converges almost surely to a non-negative random variable $M_\infty^{(\a)}$.

  In particular,
  \begin{itemize} 
  \item if $L^\a+R^\a=1$ a.s., 
    then $\QQ (s) \geq 0$ for every $s<\a$ and  $\QQ (s) \leq 0$ for every $s>\a$. 
    Moreover, $M_n^{(\a)}=M_\infty^{(\a)}=1$ almost surely;
  \item if $P\{L^\a+R^\a=1\}<1$ and if $\QQ (\gamma)<0$ for some $0<\gamma<\a$, 
    then $ M_\infty^{(\a)}=0$ almost surely; 
  \item if $P\{L^\a+R^\a=1\}<1$ and if $\QQ (\gamma)<0$ for some $\gamma>\a$, 
    then $M_\infty^{(\a)}$ is a non-degenerate random variable 
    with $\E[M_\infty^{(\a)}]=1$ and $\E[( M_\infty^{(\a)})^{\frac{\gamma}{\alpha}}]<+\infty$.
    Moreover, the characteristic function $\psi$ of  $M_\infty^{(\a)}$ is the unique solution
of
\begin{equation}
    \label{ligget3bis}
    \psi(\xi)=\E[\psi(\xi L^\a)\psi(\xi R^\a)]  \qquad (\xi \in \RE)
  \end{equation}
with $-i\psi'(0)=1$.
Finally, for any $p>\a$, 
    the moment $\E[( M_\infty^{(\a)})^{\frac{p}{\a}}]$ is finite if and only if $\QQ(p)<0$.
  \end{itemize}
\end{lemma}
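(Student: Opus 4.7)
The plan is to recognize $M_n^{(\alpha)}$ as a non-negative mean-one martingale induced by the recursion \eqref{recursion}, and then distinguish the three qualitative regimes via direct comparison and the theory of smoothing transformation fixed points. Setting $\mathcal{F}_n=\sigma\bigl(I_1,\ldots,I_{n-1},(L_1,R_1),\ldots,(L_{n-1},R_{n-1})\bigr)$, the recursion gives the pointwise identity
\begin{equation*}
M_{n+1}^{(\alpha)}-M_n^{(\alpha)}=(L_n^\alpha+R_n^\alpha-1)\,\beta_{I_n,n}^\alpha,
\end{equation*}
and independence of $(L_n,R_n)$ from $\mathcal{F}_n$ and from $I_n$, combined with \eqref{eq.defalpha}, yields $\E[M_{n+1}^{(\alpha)}\mid\mathcal{F}_n]=M_n^{(\alpha)}$. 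Thus $(M_n^{(\alpha)})$ is a non-negative mean-one martingale, converging almost surely to some $M_\infty^{(\alpha)}\geq 0$ by Doob's theorem, and $\E[M_{\nu_t}^{(\alpha)}]=1$ follows by conditioning on $\nu_t$. Under the first bullet's hypothesis $L^\alpha+R^\alpha=1$ a.s., the same recursion forces $M_{n+1}^{(\alpha)}=M_n^{(\alpha)}$ identically, so $M_n^{(\alpha)}\equiv 1$; moreover $L,R\in[0,1]$ a.s., and the monotonicity of $s\mapsto x^s$ on $[0,1]$ gives the claimed signs of $\QQ(s)$.

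For the second bullet I would compare $M_n^{(\alpha)}$ with the auxiliary $Z_n^{(\gamma)}=\sum_{j=1}^n\beta_{j,n}^\gamma$ for $\gamma<\alpha$. The same computation gives $\E[Z_{n+1}^{(\gamma)}\mid\mathcal{F}_n]=Z_n^{(\gamma)}(1+\QQ(\gamma)/n)$, hence $\E[Z_n^{(\gamma)}]=\prod_{k=1}^{n-1}(1+\QQ(\gamma)/k)\to 0$ since $\QQ(\gamma)<0$. Standard tree arguments using $P\{L^\alpha+R^\alpha=1\}<1$ give $\max_{1\leq j\leq n}\beta_{j,n}\to 0$ in probability, so eventually all $\beta_{j,n}\leq 1$; then $\beta_{j,n}^\alpha\leq\beta_{j,n}^\gamma$ yields $M_n^{(\alpha)}\leq Z_n^{(\gamma)}$, and $Z_n^{(\gamma)}\to 0$ in probability (by Markov) forces $M_\infty^{(\alpha)}=0$ a.s. This matches the Kahane--Peyri\`ere heuristic, since convexity of $\QQ$ together with $\QQ(\gamma)<0=\QQ(\alpha)$ yields $\QQ'(\alpha)>0$.

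For the third bullet the crux is the uniform $L^p$-bound $\sup_n\E[(M_n^{(\alpha)})^p]<\infty$ with $p=\gamma/\alpha>1$, which gives uniform integrability and hence the non-degeneracy $\E[M_\infty^{(\alpha)}]=1$. I would establish it through the branching decomposition at the root: conditioning on the size $K_n\in\{1,\ldots,n-1\}$ of the left subtree (uniform, as in the proof of Proposition \ref{Prop:probint}) yields the distributional identity
\begin{equation*}
M_n^{(\alpha)}\stackrel{d}{=}L_1^\alpha\,M'_{K_n}+R_1^\alpha\,M''_{n-K_n},
\end{equation*}
where $M'_\cdot$ and $M''_\cdot$ are independent copies of the array $(M_k^{(\alpha)})_k$, also independent of $(L_1,R_1,K_n)$. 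A von~Bahr--Esseen / Topchii--Vatutin inequality for $\E[(X+Y)^p]$, combined with $\E[L^\gamma+R^\gamma]<1$, closes an induction on $a_n=\E[(M_n^{(\alpha)})^p]$. Passing to the limit in the branching identity then produces $M_\infty^{(\alpha)}\stackrel{d}{=}L^\alpha M'_\infty+R^\alpha M''_\infty$ with $M'_\infty,M''_\infty$ iid copies of $M_\infty^{(\alpha)}$ independent of $(L,R)$, which at the level of characteristic functions is exactly \eqref{ligget3bis}. Uniqueness of $\psi$ under $-i\psi'(0)=1$, together with the moment dichotomy $\E[(M_\infty^{(\alpha)})^{p/\alpha}]<\infty\iff\QQ(p)<0$, are then consequences of the general theory of smoothing transformation fixed points \cite{Liu1998,Iksanov}.

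The main technical obstacle is the $L^p$-bound itself: a naive convexity estimate on $(L_1^\alpha M'+R_1^\alpha M'')^p$ produces a $2^{p-1}$ prefactor that destroys any contraction, so a sharper Bahr--Esseen-type inequality adapted to the uniform law of $K_n$ is needed in order to close the induction against the contractive factor $\E[L^\gamma+R^\gamma]<1$. A secondary difficulty is the converse half of the moment dichotomy, which cannot be read off from $L^p$-boundedness and instead requires a Jensen-type lower bound extracted from the fixed-point equation \eqref{ligget3bis}.
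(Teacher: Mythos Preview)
Your martingale setup and the treatment of the first bullet coincide with the paper's. For the second bullet, the detour through $\beta_{(n)}\to 0$ is unnecessary, and the phrase ``so eventually all $\beta_{j,n}\leq 1$'' is not justified by convergence in probability alone. Since $0<\gamma/\alpha<1$, subadditivity of $x\mapsto x^{\gamma/\alpha}$ on $[0,\infty)$ gives directly
\[
\bigl(M_n^{(\alpha)}\bigr)^{\gamma/\alpha}=\Bigl(\sum_{j=1}^n\beta_{j,n}^\alpha\Bigr)^{\gamma/\alpha}\leq\sum_{j=1}^n\beta_{j,n}^\gamma,
\]
and the paper simply takes expectations to conclude $\E[(M_n^{(\alpha)})^{\gamma/\alpha}]\leq\E[M_n^{(\gamma)}]\leq Cn^{\QQ(\gamma)}\to 0$, regardless of the size of the individual weights.

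For the third bullet the paper takes a genuinely different route that sidesteps the obstacle you flag. Rather than proving $\sup_n\E[(M_n^{(\alpha)})^{\gamma/\alpha}]<\infty$ by induction on the two-term root split and only afterwards identifying the limit, the paper \emph{starts} by invoking the fixed-point theory \cite{DurrettLiggett1983,Liu1998,Liu2000} to obtain the unique mean-one law $\nu_\infty$ solving \eqref{ligget3bis}, together with its moment properties. Taking i.i.d.\ copies $M_j\sim\nu_\infty$ independent of the weights, one checks by induction from \eqref{ligget3bis} that $\sum_{j=1}^n M_j\beta_{j,n}^\alpha$ has law $\nu_\infty$ for \emph{every} $n$. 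Bahr--Esseen is then applied, conditionally on the weight array, to the full $n$-term centered sum $\sum_j(1-M_j)\beta_{j,n}^\alpha$, yielding
\[
\W_{\gamma/\alpha}^{\gamma/\alpha}\bigl(M_n^{(\alpha)},\nu_\infty\bigr)\;\leq\;2\,\E\Bigl[\sum_{j=1}^n\beta_{j,n}^\gamma\Bigr]\,\E\bigl|1-M_1\bigr|^{\gamma/\alpha}\;\leq\;C\,n^{\QQ(\gamma)}\to 0.
\]
Here the constant $2$ is harmless, because the contraction is driven by $\E[\sum_j\beta_{j,n}^\gamma]\to 0$ rather than by the single-step factor $\E[L^\gamma+R^\gamma]$. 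Your two-term recursion, by contrast, must beat the Bahr--Esseen constant with that single-step factor, which need not lie below $1/2$; this is exactly the difficulty you identify, and a Topchii--Vatutin refinement does not obviously close the induction under the stated hypotheses. The paper's coupling also delivers $\E[M_\infty^{(\alpha)}]=1$, $\E[(M_\infty^{(\alpha)})^{\gamma/\alpha}]<\infty$, non-degeneracy, and the moment dichotomy immediately, by transferring the already-known properties of $\nu_\infty$ to $M_\infty^{(\alpha)}$.
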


We are eventually in the position to formulate our main results.
The first statement concerns the case where $\a\neq1$ and $\a\neq2$.
\begin{theorem}
  \label{thm2} { Assume that \eqref{eq.defalpha} holds 
   with $\a \in (0,1) \cup (1,2)$} and that $\QQ(\gamma)<0$ for 
 some $\gamma>0$.
  Moreover, let condition \eqref{stabledomain} be satisfied for $F=F_0$
  and let $X_0$ be centered if $\a>1$.
  Then $V_t$ converges in distribution, as $t\to +\infty$, to a random variable $V_\infty$ 
  with the following characteristic function
  \begin{equation}
    \label{characteristic}
    \phi_\infty(\xi) = \E[\exp(i \xi  V_\infty) ] 
    =\E[ \exp\{ -|\xi|^\a k M_{\infty}^{(\a)} (1-i \eta \tan(\pi\a/2)
    \operatorname{sign}\xi)   \} ]  
    \qquad (\xi \in \RE) ,
  \end{equation}
  where the parameters $k$ and $\eta$ are defined in \eqref{constant}.
  In particular, $V_\infty$ is a non-degenerate random variable if $c^++c^->0$ and $\gamma>\a$,
  whereas $V_\infty=0$ a.s. if $c^+=c^-=0$, or if $\gamma<\a$.
  Moreover, if $L^\a+R^\a=1$ a.s., then the distribution of $V_\infty$ is an $\a$-stable law.
  Finally, if $V_\infty$ is non-degenerate, 
  then $\E [|V_\infty|^{p}] <+\infty$ if and only if $p<\a$.
\end{theorem}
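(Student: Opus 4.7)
The plan is to use Proposition~\ref{Prop:probint} to reduce the asymptotic analysis of $\phi(t)$ to that of $W_n$ as $n\to\infty$, and then apply the classical central limit theorem for sums of independent random variables in the normal domain of attraction of a stable law, \emph{conditionally} on the random array of weights $(\beta_{j,n})$.

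Since $\nu_t\to+\infty$ in probability as $t\to+\infty$, it suffices to show that $W_n$ converges in distribution to $V_\infty$. Using the independence of $(X_j)$ from $(\beta_{j,n})$,
$$\E\bigl[e^{i\xi W_n}\bigr]=\E\Bigl[\prod_{j=1}^n\phi_0(\xi\,\beta_{j,n})\Bigr],$$
and the inner product is bounded in modulus by $1$. Thus the first task is to prove that the inner product converges \emph{almost surely} (in the weights) to the random characteristic function appearing inside the expectation in \eqref{characteristic}; the desired convergence then follows by dominated convergence.

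For this almost-sure step I would freeze a realization of $(\beta_{j,n})$ and view $W_n$ as a sum of the independent random variables $\beta_{j,n}X_j$ forming a triangular array. Two conditions must be verified: the uniform negligibility $\max_{j\le n}\beta_{j,n}\to 0$ in probability, and the convergence $\sum_j\beta_{j,n}^\alpha\to M_\infty^{(\alpha)}$. The second is exactly the content of Lemma~\ref{Lemma2bis}, and the first follows from it since, along the recursive construction \eqref{recursion}, $\max_j\beta_{j,n}^\alpha$ is controlled by the individual jumps of the almost-surely convergent sequence $M_n^{(\alpha)}$. Under hypothesis \eqref{stabledomain}, a Taylor expansion of $\log\phi_0(\xi\,\beta_{j,n})$ combined with the identification \eqref{constant} of $k,\eta$ from $c^\pm$ then produces the limit $-|\xi|^\alpha k M_\infty^{(\alpha)}(1-i\eta\tan(\pi\alpha/2)\operatorname{sign}\xi)$. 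The main obstacle is the linear term in this expansion when $\alpha\in(1,2)$: here the hypothesis $\E[X_0]=0$ is essential, so that the drift $i\xi\,\E[X_0]\sum_j\beta_{j,n}$ vanishes identically; for $\alpha\in(0,1)$ no centering is required and the linear term is absorbed in the usual stable-tail estimate.

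The remaining claims are short consequences of the formula \eqref{characteristic}. The limit law is degenerate iff its exponent vanishes a.s., which occurs precisely when $k=0$ (equivalently $c^+=c^-=0$) or $M_\infty^{(\alpha)}=0$ a.s.\ (equivalently, by Lemma~\ref{Lemma2bis}, $\gamma<\alpha$). When $L^\alpha+R^\alpha=1$ a.s., Lemma~\ref{Lemma2bis} gives $M_\infty^{(\alpha)}=1$ and \eqref{characteristic} collapses to the characteristic function \eqref{chaSta}, so $V_\infty$ is $\alpha$-stable. For the moment statement, conditionally on $M_\infty^{(\alpha)}$ the law of $V_\infty$ is $\alpha$-stable with scale proportional to $(M_\infty^{(\alpha)})^{1/\alpha}$; hence $\E[|V_\infty|^p\mid M_\infty^{(\alpha)}]=+\infty$ on $\{M_\infty^{(\alpha)}>0\}$ for every $p\ge\alpha$, while for $p<\alpha$ Jensen's inequality together with $\E[M_\infty^{(\alpha)}]=1$ yields $\E[(M_\infty^{(\alpha)})^{p/\alpha}]\le 1$, and thus $\E[|V_\infty|^p]<+\infty$.
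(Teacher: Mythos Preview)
Your overall strategy---condition on the weights, apply a stable central limit theorem for triangular arrays, then take expectations by dominated convergence---is exactly the paper's route. However, there is a genuine gap in your verification of the uniform negligibility condition $\max_{j\le n}\beta_{j,n}\to 0$. You claim this follows because ``$\max_j\beta_{j,n}^\alpha$ is controlled by the individual jumps of the almost-surely convergent sequence $M_n^{(\alpha)}$,'' but this is false. Consider the case $L^\alpha+R^\alpha=1$ a.s.: then $M_n^{(\alpha)}\equiv 1$ for every $n$, so all jumps vanish identically, yet one still needs $\beta_{(n)}\to 0$, and nothing in your argument delivers it. The paper's Lemma~\ref{Lemma1} instead uses the hypothesis $\QQ(\gamma)<0$ directly: since $\beta_{(n)}^\gamma\le M_n^{(\gamma)}$, Markov's inequality together with $\E[M_n^{(\gamma)}]\le Cn^{\QQ(\gamma)}\to 0$ (Proposition~\ref{PropConst}) gives $\beta_{(n)}\to 0$ in probability. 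This is the only place the assumption $\QQ(\gamma)<0$ enters the convergence argument, and your proof does not invoke it.

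A second, related issue is that you announce an almost-sure analysis (``freeze a realization'') but the negligibility you need is only available in probability. The paper resolves this by a subsequence argument: given any divergent $(n')$, extract $(n'')$ along which both $\beta_{(n'')}\to 0$ and $M_{n''}^{(\alpha)}\to M_\infty^{(\alpha)}$ hold a.s.\ on a set $\Omega_0$ of full probability (Lemma~\ref{lemma3}); prove the conditional limit \eqref{eq.char02} on $\Omega_0$; then observe that the limit does not depend on the original sequence, so the whole sequence converges. Finally, rather than a Taylor expansion of $\log\phi_0$, the paper verifies the classical L\'evy--Khinchin conditions \eqref{condition1-uno}--\eqref{condition3} for the general central limit theorem (Lemma~\ref{lemma4_2}); your characteristic-function approach can be made rigorous along the same lines, but it requires the uniform negligibility to control the $o(|\beta_{j,n}\xi|^\alpha)$ remainders summed over $j$, so it does not bypass the gap above.
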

If $c^-=c^+$ then 
the limit distribution is a mixture of symmetric stable distributions. 
For instance this is true if $F_0$ is the distribution function of a symmetric random variable.

If $\a<1$ and $X_0\geq0$,
then clearly $c^-=0$ and the limit distribution is a mixture of positive stable distributions. 
Recall that a positive stable distribution
is characterized by its Laplace transform $s \mapsto \exp( - k s^\a)$;
hence, in this case,
\begin{align*}
  \E[\exp(- s  V_\infty) ]
  =\E[\exp\{ -s^\a  \bar k M_{\infty}^{(\a)}\}] 
  \quad\mbox{for all $s >0$, with $\bar k= c^+ \int_{0}^{+\infty}\frac{(1-e^{-y})}{y^{\a+1}}dy$.} 
\end{align*}

A  consequence of Theorem \ref{thm2} is that 
if $\E[ |X_0|^\a]<\infty$, 
then the limit $V_\infty$ is zero almost surely, since $c^+=c^-=0$.
The situation is different in the cases $\a=1$ and $\a=2$,
where $V_\infty$ is non-trivial provided that the first respectively  second moment of $X_0$ is finite.

\begin{theorem}
  \label{thm1} { Assume that \eqref{eq.defalpha} holds 
  with $\a=1$} and that $\QQ (\gamma)<0$
  for some $\gamma>0$. 
  If the initial condition  possesses a finite first moment $m_0=\E[X_0]$, 
  then $V_t$ converges in distribution, as $t\to +\infty$, to $V_\infty:=m_0 M_{\infty}^{(1)}$.
  In particular, $V_\infty$ is non-degenerate if $\gamma>1$ and $m_0\not=0$, whereas $V_\infty=0$ if $\gamma<1$.
  Moreover, if $L+R=1$ a.s., then $V_\infty=m_0$ a.s.
  Finally, if $V_\infty$ is non-degenerate and $p>1$, 
  then  $\E[|V_\infty|^{p}] <+\infty$ if and only if $\QQ(p)<0$.
\end{theorem}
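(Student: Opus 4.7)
The plan is to exploit the probabilistic representation from Proposition \ref{Prop:probint}, which gives $V_t = W_{\nu_t}$ with $W_n = \sum_{j=1}^n \beta_{j,n} X_j$, and to determine the limit distribution of $W_n$ by a central-limit-style argument on its conditional characteristic function given the weights. Since the $X_j$ are i.i.d. with mean $m_0$, the Taylor-type expansion $\phi_0(u) = 1 + i m_0 u + o(u)$ as $u\to 0$ suggests
\begin{equation*}
    \E[e^{i\xi W_n}] = \E\Big[\prod_{j=1}^n \phi_0(\xi\beta_{j,n})\Big] \approx \E\big[\exp(i\xi m_0 M_n^{(1)})\big],
\end{equation*}
and Lemma \ref{Lemma2bis} then yields that the right-hand side converges to $\E[\exp(i\xi m_0 M_\infty^{(1)})]$, the characteristic function of $V_\infty = m_0 M_\infty^{(1)}$.

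The crucial technical ingredient is the negligibility condition $\max_{1\leq j\leq n}\beta_{j,n}\to 0$ in probability. I would prove this directly from the recursion \eqref{recursion} and the hypothesis $\QQ(\gamma) < 0$: setting $a_n := \E\big[\sum_{j=1}^n \beta_{j,n}^\gamma\big]$ and using that $I_n$ is uniform on $\{1,\ldots,n\}$ and independent of $(L_n,R_n)$ and of the previously generated weights, a short computation gives
\begin{equation*}
    a_{n+1} = a_n\Big(1 + \tfrac{\QQ(\gamma)}{n}\Big), \qquad a_1 = 1,
\end{equation*}
so that $a_n$ behaves asymptotically like $c\, n^{\QQ(\gamma)}$ and tends to $0$. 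Since $(\max_j \beta_{j,n})^\gamma \leq \sum_j \beta_{j,n}^\gamma$, this yields the desired negligibility. With it in hand, the heuristic above is made rigorous in the standard Lindeberg--Feller spirit: writing $\phi_0(u)-1 = i m_0 u + u\,\eta(u)$ with $\eta$ bounded on compacts and $\eta(u)\to 0$ as $u\to 0$, and using $|\phi_0(u)-1| \leq \E[|X_0|]\,|u|$ together with $|\log z - (z-1)| \leq C|z-1|^2$ for $|z-1|\leq 1/2$, the error in approximating $\sum_j \log\phi_0(\xi\beta_{j,n})$ by $i\xi m_0 M_n^{(1)}$ is dominated by a quantity of the form $|\xi|\,M_n^{(1)}\max_j|\eta(\xi\beta_{j,n})| + C\xi^2\, M_n^{(1)}\max_j \beta_{j,n}$, which tends to $0$ in probability because $M_n^{(1)}$ converges almost surely and $\max_j \beta_{j,n}\to 0$ in probability. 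The passage from $W_n$ to $V_t = W_{\nu_t}$ is then immediate from $\nu_t\to\infty$ almost surely.

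The remaining assertions all follow from Lemma \ref{Lemma2bis}: non-degeneracy of $V_\infty$ when $\gamma>1$ and $m_0\neq 0$; the identity $V_\infty = 0$ when $\gamma<1$, since $M_\infty^{(1)}=0$ almost surely in that regime; the equality $V_\infty = m_0$ when $L+R=1$ almost surely, since $M_n^{(1)}\equiv 1$; and the sharp moment characterization $\E[|V_\infty|^p]<\infty \iff \QQ(p)<0$ for $p>1$, via $|V_\infty|^p = |m_0|^p (M_\infty^{(1)})^p$. I expect the main obstacle to be the bookkeeping in the Lindeberg-style step: although every single ingredient (the uniform bound $|\phi_0(u)-1|\leq \E[|X_0|]\,|u|$, the vanishing of $\eta$ at $0$, the $L^1$-boundedness of $M_n^{(1)}$, and the negligibility of $\max_j\beta_{j,n}$) is routine, combining them so as to interchange limit and expectation, first conditionally on the weights and then unconditionally by dominated convergence, requires some care because the $o(u)$ in the expansion of $\phi_0$ is not uniform in $u$.
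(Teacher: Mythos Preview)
Your approach is correct and genuinely different from the paper's. The paper does not argue directly: it first proves the harder Theorem~\ref{thm2-bis} (the $\alpha=1$ case with $F_0$ merely in the domain of normal attraction of a $1$-stable law, using the $\sin$-based centering $q_{j,n}=\int\sin(\beta_{j,n}x)\,dF_0(x)$), and then deduces Theorem~\ref{thm1} as a corollary via the decomposition
\[
V_t = m_0 M_{\nu_t}^{(1)} + V_t^* - R_{\nu_t},\qquad R_n=\sum_{j=1}^n(q_{j,n}-\beta_{j,n}m_0),
\]
showing that $V_t^*\to 0$ (since $c^+=c^-=0$ when $m_0$ is finite) and that $R_{\nu_t}\to 0$ via the elementary bound $|\sin x/x-1|\le\tfrac16(x^2\wedge 1)$. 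Your route bypasses Theorem~\ref{thm2-bis} entirely: you work directly with the conditional characteristic function $\prod_j\phi_0(\xi\beta_{j,n})$, use the first-order expansion $\phi_0(u)=1+im_0u+u\eta(u)$ available from $\E|X_0|<\infty$, and control the error by $M_n^{(1)}\max_j\beta_{j,n}$ and $M_n^{(1)}\max_j|\eta(\xi\beta_{j,n})|$. Your negligibility argument for $\max_j\beta_{j,n}$ is exactly the paper's Lemma~\ref{Lemma1} (via Proposition~\ref{PropConst}), so nothing new is needed there.

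What each approach buys: yours is more elementary and self-contained for the finite-mean case---it avoids the full triangular-array CLT with L\'evy measures and the somewhat delicate $\sin$-centering of Theorem~\ref{thm2-bis}. The paper's approach is more economical in context, since Theorem~\ref{thm2-bis} must be proved anyway for the infinite-mean case, and then Theorem~\ref{thm1} drops out in a few lines. Your concern about the non-uniform $o(u)$ is not a real obstacle: since $\eta$ is continuous at $0$ with $\eta(0)=0$ and all arguments $\xi\beta_{j,n}$ lie in $[-|\xi|\beta_{(n)},|\xi|\beta_{(n)}]$, the bound $\max_j|\eta(\xi\beta_{j,n})|\to 0$ in probability is immediate. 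One minor imprecision: ``$\nu_t\to\infty$ almost surely'' is not guaranteed by the paper's setup (only the marginals of $\nu_t$ are specified), but $\nu_t\to\infty$ in probability suffices, or simply use the Wild-sum identity $\E[e^{i\xi V_t}]=\sum_n e^{-t}(1-e^{-t})^{n-1}\E[e^{i\xi W_n}]$ and the fact that these weights concentrate on large $n$.
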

We remark that under the hypotheses of the previous theorem, 
the first moment of the solution is preserved in time.
Indeed one has,
\begin{align*}
  \E[V_t] 
  = \E\big[\E\big[{\textstyle \sum_{j=1}^{\n_t} \b_{j,{\n_t}}X_j }\big| 
\nu_t,\,\b_{1,{\n_t}},\dots,\b_{\n_t ,{\n_t}} \big]\big] 
  = m_0  \E[M_{\n_t}^{(1)}] = m_0 ,
\end{align*}
where the last equality follows from \eqref{eq.Misone}.

Theorem \ref{thm1} above is the most natural generalization 
of the results in \cite{MatthesToscani},
where the additional condition $\E[ |X_0|^{1+\epsilon} ]<\infty$ for some $\epsilon>0$ has been assumed.
The respective statement for $\a=2$ reads as follows.

\begin{theorem}
  \label{thm3-bis} { Assume that \eqref{eq.defalpha} holds 
  with $\a = 2$ } and that $\QQ (\gamma)<0$ for some $\gamma>0$. 
  If  $\E[X_0]=0$ and $\s^2=\E[X_0^2]<+\infty$,
  then $V_t$ converges in distribution, as $t\to +\infty$, to a random variable $V_\infty$ 
  with characteristic function 
  \begin{align}
    \label{characteristic-gauss}
    \phi_\infty (\xi) = \E[\exp( i \xi V_\infty) ]
    =\E\left[\exp( -  \xi^2 \frac{\s^2}{2}  M_\infty^{(2)} )\right]
    \qquad (\xi \in \RE).
  \end{align}
  In particular, $V_\infty$ is a non-degenerate random variable if $\gamma>2$ and $\s^2>0$,
  whereas $V_\infty=0$ a.s. if $\gamma<2$.
  Moreover, if $L^2+R^2=1$ a.s., then  $V_\infty$ is a Gaussian random variable.
  Finally, if $V_\infty$ is non-degenerate and $p>2$, 
  then $\E[ |V_\infty|^{p} ] <+\infty$ if and only if $\QQ(p)<0$.
\end{theorem}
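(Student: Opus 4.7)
The plan is to combine the probabilistic representation from Proposition \ref{Prop:probint} with a conditional central limit theorem for triangular arrays, using the random weights $\beta_{j,\nu_t}$ as a conditioning sigma-algebra. Let $\mathcal{A}_t$ denote the sigma-algebra generated by $\nu_t$ and $(\beta_{j,\nu_t})_{j=1}^{\nu_t}$. Since the $X_j$'s are independent of $\mathcal{A}_t$,
\[
\E[e^{i\xi V_t}\mid \mathcal{A}_t] = \prod_{j=1}^{\nu_t} \phi_0(\beta_{j,\nu_t}\xi).
\]
Taking expectations, it is enough to identify the pointwise limit of this conditional characteristic function and pass to expectation via dominated convergence.

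The first key ingredient is that $\max_{1\le j\le\nu_t}\beta_{j,\nu_t}\to 0$ in probability as $t\to\infty$. Since $\nu_t\to\infty$ in probability, it is enough to show $\max_{j\le n}\beta_{j,n}\to 0$ in probability as $n\to\infty$. I would split into cases using the hypothesis $\QQ(\gamma)<0$: if $\gamma\le 2$, Lemma \ref{Lemma2bis} yields $M_\infty^{(2)}=0$ almost surely, so $\max_j\beta_{j,n}^2 \le M_n^{(2)}\to 0$ a.s.; if $\gamma>2$, a direct recursion on the tree structure (splitting a weight $\beta_{I_n,n}$ into $L_n\beta_{I_n,n},R_n\beta_{I_n,n}$ and using that $I_n$ is uniform on $\{1,\dots,n\}$) gives $\E[M_{n+1}^{(\gamma)}] = (1+\QQ(\gamma)/n)\,\E[M_n^{(\gamma)}]$, which tends to zero, hence $\max_j\beta_{j,n}\le (M_n^{(\gamma)})^{1/\gamma}\to 0$ in probability. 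The second key ingredient, provided by Lemma \ref{Lemma2bis} together with $\nu_t\to\infty$, is that $M_{\nu_t}^{(2)}\to M_\infty^{(2)}$ in probability.

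The central estimate is then a Taylor expansion of $\phi_0$. Since $\E[X_0]=0$ and $\E[X_0^2]=\sigma^2<\infty$, one has $\phi_0(u)=1-\tfrac{\sigma^2}{2}u^2+u^2\eta(u)$ with $\eta(u)\to 0$ as $u\to 0$; equivalently, $\log\phi_0(u)=-\tfrac{\sigma^2}{2}u^2+u^2\tilde\eta(u)$ in a neighbourhood of the origin. Summing and using $\sum_j\beta_{j,\nu_t}^2=M_{\nu_t}^{(2)}$,
\[
\sum_{j=1}^{\nu_t}\log\phi_0(\beta_{j,\nu_t}\xi) = -\frac{\sigma^2\xi^2}{2}M_{\nu_t}^{(2)} + \xi^2 M_{\nu_t}^{(2)}\cdot\epsilon_t(\xi),
\]
where $|\epsilon_t(\xi)|\le\sup_{|u|\le \xi\max_j\beta_{j,\nu_t}}|\tilde\eta(u)|\to 0$ in probability by the first key ingredient. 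Since $M_{\nu_t}^{(2)}$ is tight (its expectation equals one), the conditional characteristic function converges in probability to $\exp(-\tfrac{\sigma^2\xi^2}{2}M_\infty^{(2)})$. Bounded convergence then yields $\E[e^{i\xi V_t}]\to \E[\exp(-\tfrac{\sigma^2\xi^2}{2}M_\infty^{(2)})]=\phi_\infty(\xi)$, and L\'evy's continuity theorem gives the claimed weak convergence.

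The remaining assertions follow directly from Lemma \ref{Lemma2bis}. If $\gamma<2$ then $M_\infty^{(2)}=0$ a.s., so $\phi_\infty\equiv 1$ and $V_\infty=0$; if $L^2+R^2=1$ a.s.\ then $M_\infty^{(2)}=1$ a.s., making $\phi_\infty$ a centered Gaussian c.f.; non-degeneracy of $V_\infty$ under $\gamma>2$ and $\sigma^2>0$ is immediate since $\phi_\infty(\xi)<1$ whenever $\xi\ne 0$ and $M_\infty^{(2)}$ is non-degenerate. Finally, the conditional distribution of $V_\infty$ given $M_\infty^{(2)}$ is $N(0,\sigma^2 M_\infty^{(2)})$, so $\E[|V_\infty|^p]=c_p\sigma^p\E[(M_\infty^{(2)})^{p/2}]$, and the last moment condition $\QQ(p)<0$ follows from the corresponding statement for $M_\infty^{(2)}$ in Lemma \ref{Lemma2bis}. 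The main obstacle I anticipate is the first step: proving $\max_j\beta_{j,\nu_t}\to 0$ in probability via the case analysis on $\gamma$, and then carefully controlling the uniformity of the Taylor remainder in the conditional log-characteristic function.
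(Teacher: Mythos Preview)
Your proof is correct and reaches the same conclusion as the paper, but the route is genuinely different. The paper proves an analogue of Lemma~\ref{lemma4_2} for $\alpha=2$ (Lemma~\ref{lemma4_5}): it passes to an almost-sure subsequence $(n'')$ via Lemma~\ref{lemma3}, and then applies the general central limit theorem for triangular arrays (Fristedt--Gray), verifying the three structural conditions $\zeta_{n''}(x)\to 0$, $\sigma_{n''}^2(\varepsilon)\to\sigma^2 M_\infty^{(2)}$, and $\eta_{n''}\to 0$. This machinery is the same one used for $\alpha\neq 2$ in Lemma~\ref{lemma4_2}, so the paper gets a unified treatment across all exponents at the price of invoking a heavier theorem.

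Your argument instead exploits the finiteness of the second moment directly via the Taylor expansion $\log\phi_0(u)=-\tfrac{\sigma^2}{2}u^2+u^2\tilde\eta(u)$, which is the classical characteristic-function proof of the CLT adapted to random weights. You also work with $\nu_t$ and convergence in probability throughout, avoiding the subsequence extraction. This is more elementary and self-contained for $\alpha=2$, but would not extend to the stable cases $\alpha<2$ treated in Theorem~\ref{thm2}, where no such expansion is available. Your derivation of $\max_j\beta_{j,n}\to 0$ by the case split on $\gamma$ is fine, though note that the paper packages both cases into the single Lemma~\ref{Lemma1} via $\E[M_n^{(\gamma)}]\le Cn^{\QQ(\gamma)}$. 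The moment computation at the end and the reading-off of (non-)degeneracy from Lemma~\ref{Lemma2bis} are the same as in the paper.
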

Some additional properties of the solution $V_t$ should be mentioned:
centering is obviously propagated from the initial condition $X_0$
to the solution $V_t$ at all later times $t\geq0$.
Moreover, under the hypotheses of the theorem, 
the second moment of the solution is preserved in time.
Indeed, taking into account that the $X_i$ are independent and centered,
\begin{align*}
  \E[V_t^2] 
  = \E\big[\E\big[{\textstyle \sum_{j,k=1}^{\n_t} \b_{j,{\n_t}}\b_{k,{\n_t}}X_jX_k }\big| \nu_t,
\,\b_{1,{\n_t}},\dots,\b_{\n_t ,{\n_t}}   \big]\big] 
  = \s^2 \E[M_{\n_t}^{(2)}] = \s^2,
\end{align*}
where we have used \eqref{eq.Misone} in the last step.

The technically most difficult result concerns the situation $\a=1$
for an initial condition of infinite first moment.
Weak convergence to a limit can still be proven if $\E[ |X_0| ]=\infty$,
but the law of $X_0$ belongs to the domain of normal 
attraction of a 1-stable distribution.
However, a suitable time-dependent centering 
needs to be applied to the random variables $V_t$.

\begin{theorem}
  \label{thm2-bis} 
  { Assume that \eqref{eq.defalpha} holds with $\a=1$} and that $\QQ (\gamma)<0$ for some $\gamma>0$.
  Moreover, let  the condition \eqref{stabledomain} be satisfied for $F=F_0$.
  Then the random variable
\begin{equation}\label{centr-case1}
      V_t^* := V_t - \sum_{j=1}^{\nu_t} q_{j,\nu_t} , 
    \quad\mbox{where}\quad q_{j,n}:=\int_{\RE} \sin(\b_{j,n}x)dF_0(x),
  \end{equation}
  converges in distribution to a limit $V^*_\infty$ 
  with characteristic function
  \begin{equation}
    \label{characteristic-bis}
    \phi_\infty(\xi) = \E[\exp(i \xi  V^*_\infty) ]
    =\E[ \exp\{ -|\xi| k M_{\infty}^{(1)} (1+2i\eta/\pi \log|\xi| \operatorname{sign}\xi )  \} ]  
    \qquad (\xi \in \RE)
  \end{equation}
  where the parameters $k$ and $\eta$ are defined in \eqref{constant}.
  In particular, $V_\infty$ is a non-degenerate random variable if $c^++c^->0$ and $\gamma>1$,
  whereas $V_\infty=0$ a.s. if $c^+=c^-=0$, or if $\gamma<1$.
  Moreover, if $L+R=1$ a.s., then the distribution of $V_\infty$ is a $1$-stable law.
  Finally, if $V_\infty$ is non-degenerate, 
  then $\E [|V_\infty|^{p}] <+\infty$ if and only if $p<1$.
\end{theorem}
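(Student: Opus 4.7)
The plan is to apply the central limit theorem for triangular arrays, conditionally on the array of random weights. Using Proposition \ref{Prop:probint}, $V_t^* = W_{\nu_t}-\sum_{j=1}^{\nu_t}q_{j,\nu_t}$ with $W_n=\sum_j \beta_{j,n}X_j$. Since $\nu_t$ is independent of the weights and of the $X_j$'s, and $\nu_t\to\infty$ a.s., it suffices to prove that, for each $\xi\in\RE$, the conditional characteristic function
$$\psi_n(\xi):=\prod_{j=1}^n \phi_0(\beta_{j,n}\xi)\,e^{-i\xi q_{j,n}}$$
converges almost surely to $\exp\{-|\xi|k M_\infty^{(1)}(1+(2i\eta/\pi)\log|\xi|\operatorname{sign}\xi)\}$ as $n\to\infty$, and then to take expectations and integrate against the law of $\nu_t$ via dominated convergence.

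\textbf{Conditioning and CLT conditions.} Given the $\sigma$-algebra $\CG_n=\sigma(\beta_{1,n},\dots,\beta_{n,n})$, the summands $\beta_{j,n}X_j-q_{j,n}$ are conditionally independent, so $\psi_n(\xi)$ is genuinely the conditional characteristic function of $W_n^*=W_n-\sum_j q_{j,n}$. I verify the classical conditions of the CLT for triangular arrays in the Gnedenko--Kolmogorov form. The uniform asymptotic negligibility $\max_j\beta_{j,n}\to 0$ almost surely is obtained from the recursive definition \eqref{recursion} together with Lemma \ref{Lemma2bis} (the a.s. finiteness of $M_\infty^{(1)}$ forces the weights to become small). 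The tails of the limiting L\'evy measure match those of a $1$-stable law of random scale $M_\infty^{(1)}$: using $1-F_0(x)\sim c^+/x$ from \eqref{stabledomain},
$$\sum_{j=1}^n \PP\{\beta_{j,n}X_j>t\mid \CG_n\}=\sum_{j=1}^n\bigl(1-F_0(t/\beta_{j,n})\bigr)\sim \frac{c^+}{t}\,M_n^{(1)}\longrightarrow \frac{c^+}{t}\,M_\infty^{(1)},$$
and symmetrically for the left tail with $c^-$.

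\textbf{Centering and the main obstacle.} The technical heart of the proof is the analysis of the centering: for $\alpha=1$ the expansion of $\phi_0$ near zero in the domain of normal attraction carries a logarithmic correction (see Ch.~2 of \cite{IbragimovLinnik1971}),
$$\phi_0(u)=1+i\,\operatorname{Im}\phi_0(u)-k|u|\bigl(1+(2i\eta/\pi)\log|u|\operatorname{sign} u\bigr)+o(|u|)\qquad(u\to 0).$$
The choice $q_{j,n}=\operatorname{Im}\phi_0(\beta_{j,n})=\int\sin(\beta_{j,n}x)dF_0(x)$ is precisely the one that absorbs the divergent $\beta_{j,n}\log\beta_{j,n}$ contributions in the product, leaving
$$\log\psi_n(\xi)=-k|\xi|\,M_n^{(1)}\bigl(1+(2i\eta/\pi)\log|\xi|\operatorname{sign}\xi\bigr)+o(1)\quad\text{a.s.,}$$
uniformly on compact sets of $\xi$. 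Here the cancellation requires care: one splits the sum over $j$ according to the size of $\beta_{j,n}|\xi|$, uses the above expansion for the small indices and standard bounds $|\phi_0(u)|\le 1$ for the large ones, and exploits the uniform negligibility established above. Passing to the limit via $M_n^{(1)}\to M_\infty^{(1)}$ and taking expectations over $M_\infty^{(1)}$ yields \eqref{characteristic-bis}. The remaining assertions of the theorem, namely non-degeneracy of $V_\infty^*$ when $c^++c^->0$ and $\gamma>1$, degeneracy when $c^+=c^-=0$ or $\gamma<1$, reduction to a genuine $1$-stable law in the case $L+R=1$ a.s., and the moment dichotomy $\E|V_\infty^*|^p<+\infty$ iff $p<1$, all reduce, through the mixture representation \eqref{characteristic-bis}, to the corresponding properties of $M_\infty^{(1)}$ given in Lemma \ref{Lemma2bis} and to the classical tail behaviour of $1$-stable laws.
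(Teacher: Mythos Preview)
Your overall strategy---condition on the weights, apply a triangular-array CLT, then take expectations and use the Wild representation---is the same as the paper's.  However, two steps in your outline are genuine gaps.

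First, your justification of the uniform asymptotic negligibility is incorrect.  You write that ``the a.s.\ finiteness of $M_\infty^{(1)}$ forces the weights to become small'', but almost-sure boundedness of $\sum_j \beta_{j,n}$ does \emph{not} by itself force $\max_j\beta_{j,n}\to 0$ almost surely (a single weight could remain bounded away from zero on a set of positive probability while the sum still converges).  In the paper, $\beta_{(n)}\to 0$ is obtained only \emph{in probability}, via Lemma~\ref{Lemma1}, using the hypothesis $\QQ(\gamma)<0$.  Consequently the conditional characteristic function $\psi_n(\xi)$ need not converge almost surely along the full sequence; one must pass to subsequences as in Lemma~\ref{lemma3alfa=1} (extract $(n'')$ and a set $\Omega_0$ of probability one on which both $\beta_{(n'')}\to 0$ and $M_{n''}^{(1)}\to M_\infty^{(1)}$), prove the limit along $(n'')$, and then recover convergence along the full sequence by the standard ``every subsequence has a further subsequence'' argument, exactly as in the proof of Theorem~\ref{thm2}.

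Second, your characteristic-function expansion is not stated correctly and the ``cancellation'' step is where all the difficulty lies.  As written, the identity
\[
\phi_0(u)=1+i\,\operatorname{Im}\phi_0(u)-k|u|\bigl(1+(2i\eta/\pi)\log|u|\operatorname{sign} u\bigr)+o(|u|)
\]
equates a real quantity (the left-hand side minus $i\operatorname{Im}\phi_0(u)$ is $\operatorname{Re}\phi_0(u)$) with one having a non-vanishing imaginary part, so it cannot hold.  Also note that $q_{j,n}=\operatorname{Im}\phi_0(\beta_{j,n})$ is evaluated at $\beta_{j,n}$, not at $\beta_{j,n}\xi$, so the phase $e^{-i\xi q_{j,n}}$ does not directly cancel $i\operatorname{Im}\phi_0(\beta_{j,n}\xi)$; making the cancellation precise for $\alpha=1$ requires real work.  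The paper avoids this by working instead with the shifted distribution functions $Q_{j,n}(x)=F_0((x+q_{j,n})/\beta_{j,n})$ and verifying the L\'evy--Khintchine--type conditions of the general CLT (Proposition~11 of \cite[\S17.3]{fristedgray}): tail convergence \eqref{condition1-uno}--\eqref{condition1-due}, vanishing truncated variance \eqref{clt3condBis}, and the centering condition \eqref{condition3_bis} involving $\int(\chi(t)-\sin t)\,dQ_{j,n}(t)$.  The latter is precisely where the logarithmic behaviour of the $1$-stable law enters, and its verification (splitting, integrating by parts, and controlling $\sum_j\int\sin(x)\,dQ_{j,n}(x)\to 0$ via the bound $|q_{j,n}|\le C_\delta\beta_{j,n}^{1-\delta}$ of Lemma~\ref{lemma3alfa=1}) is the substantive content of Lemma~\ref{lemma4_4}.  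Your sketch identifies the right phenomenon but does not supply this analysis.
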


\subsection{Rates of convergence in Wasserstein metrics}
\label{sct.wasserstein}

Recall that the Wasserstein distance of order $\gamma>0$ 
between two random variables $X$ and $Y$ is defined by
\begin{align}
  \label{eq.wasserstein}
  \W_\gamma(X,Y):=\inf_{(X',Y')}(\E|X'-Y'|^\gamma)^{1/\max(\gamma,1)} . 
\end{align}
The infimum is taken over all pairs $(X',Y')$ of real random variables
whose marginal distribution functions are the same as those of $X$ and $Y$, respectively.
In general, the infimum in \eqref{eq.wasserstein} may be infinite;
a sufficient (but not necessary) condition for finite distance is
that both $\E[|X|^\gamma]<\infty$ and $\E[|Y|^\gamma]<\infty$. For more
information on Wasserstein distances see, for example, \cite{Rachev1991}. 

Recall that the $V_t$ are random variables whose characteristic functions $\phi(t)$ solve
the initial value problem \eqref{eq.boltzivp} for the Boltzmann equation for $t\geq0$,
and $V_\infty$ is the limit in distribution  of $V_t$ as $t\to\infty$.
\begin{proposition}
  \label{PropW-2}
  Assume \eqref{eq.defalpha} and $\QQ (\gamma)<0$, for some $\gamma$ with $1\leq \a<\gamma\leq2$
  or $\a<\gamma \leq 1$.
  Assume further that \eqref{stabledomain} holds if $\a\not=1$,
  or that $\E[|X_0|^{\gamma}]<+\infty$ if $\a=1$, respectively.
  Then 
  \begin{equation}
    \label{main_bound}
    \W_{\gamma}(V_t,V_\infty) \leq A \W_{\gamma}(X_0,V_\infty) e^{-Bt|\QQ (\gamma)|} ,
  \end{equation}
  with $A=B=1$ if $\gamma\leq1$, or $A=2^{1/\gamma}$ and $B=1/\gamma$ otherwise.
\end{proposition}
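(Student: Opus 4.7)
The plan is to couple $V_t$ with a copy of $V_\infty$ built on the same random McKean-tree structure $(\nu_t,(\beta_{j,\nu_t})_j)$, so that control of $\W_\gamma(V_t,V_\infty)$ reduces to two decoupled ingredients: a per-leaf cost bounded by $\W_\gamma(X_0,V_\infty)$ and an explicit computation of $\E[M_{\nu_t}^{(\gamma)}]$.

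First I would verify that $V_\infty$ is a fixed point of the smoothing transformation:
$$ V_\infty \stackrel{d}{=} L\,V_\infty^{(1)} + R\,V_\infty^{(2)} , $$
with iid copies $V_\infty^{(1)},V_\infty^{(2)}$ independent of $(L,R)$. This is read off the closed form of $\phi_\infty$ in Theorems \ref{thm2}, \ref{thm1}, \ref{thm3-bis} together with the fixed-point equation \eqref{ligget3bis} for $M_\infty^{(\a)}$ in Lemma \ref{Lemma2bis}. Iterating along the recursion \eqref{recursion} then gives, for every $n\geq 1$, $V_\infty \stackrel{d}{=} \sum_{j=1}^n \beta_{j,n}\,V_\infty^{(j)}$, with iid copies $V_\infty^{(j)}$ independent of the array $(\beta_{j,n})$.

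Enlarge the probability space by an iid sequence of pairs $(X_j,Y_j)$, independent of everything else, that realize an optimal coupling of $F_0$ and the law of $V_\infty$, so that $\E|X_j-Y_j|^\gamma = \W_\gamma(X_0,V_\infty)^{\max(\gamma,1)}$. Set $\tilde V := \sum_{j=1}^{\nu_t}\beta_{j,\nu_t}\,Y_j$; by the preceding step $\tilde V\stackrel{d}{=} V_\infty$, while $V_t = \sum_{j=1}^{\nu_t}\beta_{j,\nu_t}\,X_j$ is unchanged. For $\a<\gamma\leq 1$ the subadditivity $(\sum a_j z_j)^\gamma\leq \sum a_j^\gamma |z_j|^\gamma$ (with $a_j\geq 0$), followed by conditioning on $(\nu_t,\beta_{\cdot,\nu_t})$ and using the independence of the couplings from the weights, gives
$$ \E|V_t-\tilde V|^\gamma \leq \E[M_{\nu_t}^{(\gamma)}]\,\W_\gamma(X_0,V_\infty) . $$
For $1\leq\a<\gamma\leq 2$ the differences $X_j-Y_j$ are centered, since both $X_0$ and $V_\infty$ share the same first moment -- namely $m_0$ when $\a=1$ (Theorem \ref{thm1}) and $0$ when $\a>1$ (by the form of $\phi_\infty$ in Theorem \ref{thm2} for $\a\in(1,2)$, and by hypothesis for $\a=2$). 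In the subcase $1<\a<2$ the $\gamma$-th moment of $V_\infty$ is infinite by Theorem \ref{thm2}, so $\W_\gamma(X_0,V_\infty)=+\infty$ and the claim is trivial; in the remaining subcases the Von Bahr--Esseen inequality, applied conditionally on $(\nu_t,\beta_{\cdot,\nu_t})$ to the independent centered summands $\beta_{j,\nu_t}(X_j-Y_j)$, yields
$$ \E\bigl[|V_t-\tilde V|^\gamma \bigm| \nu_t,\beta_{\cdot,\nu_t}\bigr] \leq 2\,M_{\nu_t}^{(\gamma)}\,\W_\gamma(X_0,V_\infty)^\gamma , $$
whence $\W_\gamma(V_t,V_\infty) \leq 2^{1/\gamma}\,(\E[M_{\nu_t}^{(\gamma)}])^{1/\gamma}\,\W_\gamma(X_0,V_\infty)$.

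It remains to identify $\E[M_{\nu_t}^{(\gamma)}]$ in closed form. Using that $I_n$ is uniform on $\{1,\ldots,n\}$, the recursion \eqref{recursion} produces the one-step identity
$$ \E[M_{n+1}^{(\gamma)}] = \E[M_n^{(\gamma)}]\,\Bigl(1+\frac{\QQ(\gamma)}{n}\Bigr), \qquad\text{so}\qquad \E[M_n^{(\gamma)}] = \prod_{k=1}^{n-1}\Bigl(1+\frac{\QQ(\gamma)}{k}\Bigr) . $$
Weighting with $P\{\nu_t=n\}=e^{-t}(1-e^{-t})^{n-1}$ and applying the negative-binomial identity $\sum_{m\geq 0} s^m\binom{m+c}{m} = (1-s)^{-(c+1)}$ collapses the Poisson mixture to $\E[M_{\nu_t}^{(\gamma)}] = e^{t\QQ(\gamma)} = e^{-t|\QQ(\gamma)|}$. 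Substituting this into the two inequalities above yields \eqref{main_bound} with $(A,B)=(1,1)$ when $\gamma\leq 1$ and $(A,B)=(2^{1/\gamma},1/\gamma)$ when $\gamma>1$. The main obstacle I anticipate is the second case: one has to justify Von Bahr--Esseen conditionally on the weights, verify centering of $X_j-Y_j$ (which in the $\a>1$ regime is not a direct moment computation but rests on the explicit form of $\phi_\infty$), and observe that the otherwise awkward subrange $1<\a<2$ disappears because $\W_\gamma(X_0,V_\infty)$ is infinite there. The remaining pieces---the smoothing fixed-point identity for $V_\infty$ and the closed-form evaluation of $\E[M_{\nu_t}^{(\gamma)}]$---are mechanical once set up correctly.
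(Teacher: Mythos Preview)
Your approach is essentially the paper's: couple $V_t$ with $\sum_j \beta_{j,n} Y_j$ via an optimal pair $(X_j,Y_j)$, use subadditivity for $\gamma\le 1$ and Bahr--Esseen for $\gamma>1$, and invoke $\E[M_{\nu_t}^{(\gamma)}]=e^{t\QQ(\gamma)}$ (which is Proposition~\ref{PropConst}). The only cosmetic difference is that the paper bounds $\W_\gamma(W_n,V_\infty)$ first and then uses convexity of $\W_\gamma^{\max(\gamma,1)}$ to pass to $V_t$, whereas you work directly with $\nu_t$.

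There is, however, one genuine error. Your claim that ``in the subcase $1<\a<2$ the $\gamma$-th moment of $V_\infty$ is infinite, so $\W_\gamma(X_0,V_\infty)=+\infty$ and the claim is trivial'' is false: infinite marginal $\gamma$-moments do \emph{not} force $\W_\gamma(X_0,V_\infty)=+\infty$. Indeed, Lemma~\ref{lemma-tail2} is in the paper precisely to give conditions under which $\W_\gamma(X_0,V_\infty)<\infty$ for $\a\in(0,2)$, and the bound \eqref{main_bound} is meant to be non-trivial there. So this subrange does not ``disappear'' and must be handled.

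The fix is easy and you have already done the work: for all of $1\le\a<\gamma\le 2$, assume $\W_\gamma(X_0,V_\infty)<\infty$ (else trivial), take the optimal coupling so that $\E|X_j-Y_j|^\gamma<\infty$, and note $\E[X_j-Y_j]=\E[X_0]-\E[V_\infty]=0$ (your own centering argument: $X_0$ is centered by hypothesis when $\a>1$, and $V_\infty$ is a scale mixture of centered $\a$-stable laws with $\a>1$, hence has mean zero; for $\a=1$ both share mean $m_0$). Then Bahr--Esseen applies conditionally on the weights exactly as you wrote, with no case excluded.
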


Clearly, the content of Proposition \ref{PropW-2} is void unless 
\begin{align}
  \label{eq.wfinite}
  \W_{\gamma}(X_0,V_\infty) < \infty .
\end{align}
In the case $\a=1$, the hypothesis $\E|X_0|^{\gamma}<+\infty$ guarantees \eqref{eq.wfinite}. 
In all other cases, \eqref{eq.wfinite} is a non-trivial requirement since, 
by Theorem \ref{thm2}, either $V_\infty=0$ or $\E[|V_\infty|^\alpha]=+ \infty$.
The following Lemma provides a sufficient criterion for \eqref{eq.wfinite},
tailored to the situation at hand.
\begin{lemma}
  \label{lemma-tail2}
  Assume, in addition to the hypotheses of Proposition \ref{PropW-2}, 
  that $\gamma<2\a$ 
  and that $F_0$ 
  satisfies hypothesis \eqref{stabledomain} in the more restrictive sense that
  there exists a constant $K>0$ and some $0<\epsilon<1$ with
  \begin{align}
    \label{eq.Ftail}
    |1-c^+x^{-\alpha} - F_0(x)| & < K x^{-(\alpha+\epsilon)} \quad\mbox{for $x>0$}, \\
    \label{eq.Ftail2}
    |F_0(x) - c^-(-x)^{-\alpha}| & < K (-x)^{-(\alpha+\epsilon)} \quad\mbox{for $x<0$}.
  \end{align}
  Provided that $\gamma<\a/(1-\eps)$ 
  it follows $\W_{\gamma}(X_0,V_\infty)<\infty$, 
  and then estimate \eqref{main_bound} is non-trivial.
\end{lemma}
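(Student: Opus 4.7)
The plan is to exploit the one-dimensional representation of the Wasserstein distance through quantile functions. With $F_0^{-1}$ and $F_\infty^{-1}$ the left-continuous quantiles of $X_0$ and $V_\infty$, the monotone coupling gives
\[
\W_\gamma(X_0,V_\infty)^{\max(\gamma,1)} \leq \int_0^1 \bigl|F_0^{-1}(u) - F_\infty^{-1}(u)\bigr|^\gamma\,du,
\]
so it suffices to control the integrand as $u\to0^+$ and $u\to1^-$. From \eqref{eq.Ftail}, inverting $1-u = 1-F_0(x)$ asymptotically yields
\[
F_0^{-1}(u) = \Bigl(\tfrac{c^+}{1-u}\Bigr)^{1/\alpha} + O\!\bigl((1-u)^{-(1-\epsilon)/\alpha}\bigr)\qquad(u\to 1^-),
\]
and \eqref{eq.Ftail2} gives the symmetric expansion near $u=0^+$.

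For $F_\infty^{-1}$, I would use the scale-mixture representation implicit in \eqref{characteristic}: $V_\infty \stackrel{d}{=} (M_\infty^{(\alpha)})^{1/\alpha} Z$, with $Z$ an $\alpha$-stable random variable independent of $M:=M_\infty^{(\alpha)}$. Writing $P(V_\infty>x)=\E[P(Z>xM^{-1/\alpha})]$ and splitting the expectation at $\{M\leq T\}$ versus $\{M>T\}$ with $T=x^\alpha$, I apply the classical stable tail $P(Z>y)=c^+ y^{-\alpha}+O(y^{-2\alpha})$ on the good set and a Markov inequality on its complement. Three error terms appear (the truncation tail $P(M>T)$, the first-order correction $c^+x^{-\alpha}\E[M\mathbf{1}_{\{M>T\}}]$, and the second-order stable remainder $x^{-2\alpha}\E[M^2\mathbf{1}_{\{M\leq T\}}]$), and the choice $T=x^\alpha$ balances them. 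The moment bound $\E[(M_\infty^{(\alpha)})^{\gamma/\alpha}]<\infty$ is supplied by Lemma~\ref{Lemma2bis} using $\QQ(\gamma)<0$, while $\gamma<2\alpha$ ensures $\gamma/\alpha<2$ so that the second-order error is absorbable. The outcome is $P(V_\infty>x)=c^+ x^{-\alpha}+O(x^{-\gamma})$ as $x\to+\infty$, plus the analogous lower-tail estimate, whence
\[
F_\infty^{-1}(u) = \Bigl(\tfrac{c^+}{1-u}\Bigr)^{1/\alpha} + O\!\bigl((1-u)^{-(1-(\gamma-\alpha))/\alpha}\bigr)\qquad(u\to 1^-).
\]

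Subtracting the two expansions, the leading $(1-u)^{-1/\alpha}$ singularities cancel and
\[
|F_0^{-1}(u)-F_\infty^{-1}(u)|\leq C\,(1-u)^{-(1-\tilde\epsilon)/\alpha},\qquad \tilde\epsilon:=\min(\epsilon,\gamma-\alpha)>0,
\]
in a neighborhood of $u=1$, with a symmetric bound near $u=0$. Integrability of the $\gamma$-th power then reduces to $\gamma(1-\tilde\epsilon)/\alpha<1$: in the regime $\gamma-\alpha\geq\epsilon$ this is exactly the hypothesis $\gamma<\alpha/(1-\epsilon)$, and in the complementary regime the additional range constraints on $\gamma$ from Proposition~\ref{PropW-2}, combined with $\gamma<2\alpha$, still deliver the required inequality $(\gamma-1)(\gamma-\alpha)>0$. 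The most delicate step is the tail estimate for $V_\infty$: making the three competing error sources simultaneously of order $x^{-\alpha-\epsilon'}$ with $\epsilon'>0$ is precisely what the pair of hypotheses $\gamma<2\alpha$ and $\QQ(\gamma)<0$ is designed to accommodate.
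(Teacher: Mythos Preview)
Your approach is essentially the paper's: both use the quantile representation of $\W_\gamma$ on the line, both establish that $F_\infty$ obeys the same tail expansion as $F_0$ via the scale-mixture form of $V_\infty$, and both then check integrability of $|F_0^{-1}-F_\infty^{-1}|^\gamma$ near the endpoints. The paper organizes this a bit differently, packaging the quantile step as a standalone result (Lemma~\ref{lemma-tail1}) that applies to any pair of distributions sharing the bounds \eqref{eq.Ftail}--\eqref{eq.Ftail2}, and it obtains the $F_\infty$ tail more directly than you do: since $|1-c^+y^{-\alpha}-G_\alpha(y)|\le Ky^{-\gamma}$ holds uniformly for \emph{all} $y>0$ whenever $\alpha<\gamma<2\alpha$, one may simply substitute $y=(M_\infty^{(\alpha)})^{-1/\alpha}x$ and take expectations, arriving at $|1-c^+x^{-\alpha}-F_\infty(x)|\le K\,\E\big[(M_\infty^{(\alpha)})^{\gamma/\alpha}\big]\,x^{-\gamma}$ with no truncation and no Markov-inequality splitting.

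One concrete issue: your assertion that in the complementary regime $\gamma-\alpha<\epsilon$ the range constraints of Proposition~\ref{PropW-2} together with $\gamma<2\alpha$ force $(\gamma-1)(\gamma-\alpha)>0$ is false in the branch $\alpha<\gamma\le 1$ of that proposition, where $\gamma-1\le 0$ while $\gamma-\alpha>0$. In that subcase the integrability condition $\gamma(1-\tilde\epsilon)/\alpha<1$ with $\tilde\epsilon=\gamma-\alpha$ genuinely fails, and the argument as written does not close. The paper's own proof is equally terse at this point---it says ``possibly after diminishing $\epsilon$'' and invokes Lemma~\ref{lemma-tail1}---and appears to leave the same edge case unaddressed; so you have reproduced both the paper's strategy and its soft spot.
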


\subsection{Strong convergence of densities}
\label{sct.strong}
As already mentioned in the introduction, 
under suitable hypotheses, 
the probability densities of $\mu(t)$ exist and converge strongly 
in the Lebesgue spaces $L^1(\RE)$ and $L^2(\RE)$.

\begin{theorem}
  \label{thm.strongnew}
  For given $\a\in(0,1)\cup(1,2]$,
  let the hypotheses of Theorem \ref{thm2} or Theorem \ref{thm3-bis}  
 hold with $\gamma >\a$.
  Assume further that \eqref{stabledomain} holds with $c^-+c^+>0$ if $\a<2$, 
  so that the $V_t$ converges in distribution, as $t \to +\infty$,
   to a non-degenerate 
  limit $V_\infty$.
  Moreover assume also  that
  \begin{enumerate}
  \item[(H1)] $L^r+R^r\geq1$ a.s. for some $r>0$,
  \item[(H2)] $X_0$ possesses a density $f_0$ with finite Linnik-Fisher functional, 
    i.e. $h:=\sqrt{f_0}\in H^1(\RE)$, or equivalently, its Fourier transform $\fh$ satisfies
    \[ \int_\RE |\xi|^2 \big|\fh(\xi)\big|^2 \,d\xi < +\infty. \]
  \end{enumerate}
  Then, the random variable $V_t$ possesses a density $f(t)$ for all $t\geq0$,
  $V_\infty$ has a density $f_\infty$, and
  the $f(t)$ converges, as $t \to +\infty$, to $f_\infty$ in any $L^p(\RE)$ 
   with $1\leq p\leq2$, that is
   \[
   \lim_{t \to +\infty}\int_\RE|f(t;v)-f_\infty(v)|^pdv=0.
   \]
\end{theorem}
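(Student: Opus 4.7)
The plan is to upgrade the weak convergence from Theorems \ref{thm2} and \ref{thm3-bis} to strong convergence of densities by controlling the characteristic functions $\phi(t;\cdot) = \hat f(t;\cdot)$ uniformly in $t$. I would first establish that $V_t$ admits a density $f(t)$ for each $t \geq 0$. From Proposition \ref{Prop:probint} and the recursion \eqref{recursion}, induction on $n$ shows that (H1) forces $\sum_{j=1}^n \beta_{j,n}^r \geq 1$ almost surely, because replacing a leaf of weight $\beta$ by branches of weights $L\beta$ and $R\beta$ changes the $\ell^r$-mass by $\beta^r(L^r+R^r-1) \geq 0$. In particular $\max_j \beta_{j,n} > 0$ a.s., so conditionally on $(\nu_t,\beta)$ the random variable $V_t$ is a convolution of independent rescaled copies of $X_0$ at least one of which has density $f_0$; integrating out $(\nu_t,\beta)$ then yields the density $f(t)$.

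The core step is to show that the family $\{\phi(t)\}_{t \geq 0}$ is bounded and uniformly integrable in both $L^1(\RE)$ and $L^2(\RE)$. Hypothesis (H2) combined with Cauchy--Schwarz gives $\hat h_0 \in L^1(\RE) \cap L^2(\RE)$; since $f_0 = h_0^2$ has Fourier transform proportional to $\hat h_0 * \hat h_0$, Young's inequality gives $\hat f_0 \in L^1(\RE) \cap L^2(\RE)$. Starting from the product representation
\[
\phi(t;\xi) = \E\Bigl[\prod_{j=1}^{\nu_t}\hat f_0(\beta_{j,\nu_t}\xi)\Bigr],
\]
and using the scaling identity $I(\beta X) = \beta^{-2} I(X)$ together with the Blachman--Stam convolution inequality $I(X+Y) \leq I(X)$ for the Fisher information, one would derive uniform-in-$t$ bounds on $\|\phi(t)\|_{L^1}$ and $\|\phi(t)\|_{L^2}$ that exploit (H1) collectively across all weights, not merely through the single largest one.

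Granted these bounds, Fatou's lemma applied to the pointwise limit $\phi(t;\xi) \to \phi_\infty(\xi)$ shows $\phi_\infty \in L^1 \cap L^2$, whence $V_\infty$ admits a bounded continuous density $f_\infty$; Vitali's convergence theorem then yields $\phi(t) \to \phi_\infty$ in $L^1$ and in $L^2$. Fourier inversion gives $\|f(t) - f_\infty\|_{L^\infty} \to 0$ from the $L^1$ convergence of $\phi$, and Plancherel gives $\|f(t) - f_\infty\|_{L^2} \to 0$. Almost-everywhere convergence of densities, combined with Scheff\'e's lemma and the fact that $f(t)$ and $f_\infty$ are probability densities, produces $L^1$ convergence, and the cases $1 < p < 2$ follow from the interpolation inequality $\|g\|_{L^p} \leq \|g\|_{L^1}^{2/p-1}\|g\|_{L^2}^{2-2/p}$.

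The main obstacle is the uniform-in-$t$ control of $\|\phi(t)\|_{L^1}$ and $\|\phi(t)\|_{L^2}$. A naive single-particle estimate $\int\prod_j|\hat f_0(\beta_{j,n}\xi)|\, d\xi \leq (\max_j\beta_{j,n})^{-1}\|\hat f_0\|_{L^1}$ yields only the bound $n^{1/r}\|\hat f_0\|_{L^1}$, and since $\E[\nu_t^{1/r}]\sim e^{t/r}$ this gives exponential growth in $t$. One must exploit the full multiplicative structure of the product of $\nu_t$ factors: (H1) ensures that the $\ell^r$-mass of the weights does not dissipate, so that many small weights collectively produce the decay of $\phi(t;\xi)$ for large $|\xi|$. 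The delicate interplay between this collective smoothing and the tail information provided by the finite Linnik--Fisher functional of $f_0$ is what ultimately delivers the uniform integrability required for Vitali.
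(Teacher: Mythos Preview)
Your outline has the right architecture (uniform control on characteristic functions, then Plancherel and interpolation), but the core step---the uniform-in-$t$ integrability of $\phi(t;\cdot)$---is not actually established, and your own final paragraph concedes this. The Blachman--Stam route you sketch does not close the gap: the inequality $I(X+Y)^{-1}\ge I(X)^{-1}+I(Y)^{-1}$ together with the scaling $I(\beta X)=\beta^{-2}I(X)$ yields, conditionally on the weights,
\[
I(W_n\mid \beta)\ \le\ \frac{I(X_0)}{\sum_{j}\beta_{j,n}^{2}}\ =\ \frac{I(X_0)}{M_n^{(2)}},
\]
so the bound depends on $M_n^{(2)}$, not on $M_n^{(r)}$. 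Hypothesis (H1) gives $M_n^{(r)}\ge1$ but says nothing about $M_n^{(2)}$; when $\alpha<2$ one typically has $M_n^{(2)}\to0$, and the Fisher information of $W_n$ can blow up. Thus neither uniform $L^1$ nor uniform $L^2$ bounds on $\phi(t;\cdot)$ follow from this argument. Your naive single-particle estimate, as you note, also fails, and no replacement is offered.

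The paper proceeds quite differently: it proves a \emph{pointwise} envelope, uniform in $n$, of the form
\[
|\hat q_n(\xi)|\ \le\ (1+\lambda|\xi|^r)^{-1/r}\qquad(|\xi|\ge\rho),
\]
by induction on $n$ through the Wild recursion. The mechanism that makes (H1) effective here is multiplicative and elementary: if $|\hat q_j(L\xi)|\le(1+\lambda L^r|\xi|^r)^{-1/r}$ and likewise for $R$, then
\[
(1+\lambda L^r|\xi|^r)(1+\lambda R^r|\xi|^r)\ \ge\ 1+\lambda(L^r+R^r)|\xi|^r\ \ge\ 1+\lambda|\xi|^r,
\]
which propagates the bound through $\widehat Q^+$. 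A separate inductive lemma handles small $|\xi|$ by comparison with the fixed point $\Phi(\xi)=\E[\exp(-k|\xi|^\alpha M_\infty^{(\alpha)})]$. The role of (H2) in the paper is also pointwise: it supplies $|\phi_0(\xi)|\le C|\xi|^{-1}$, which starts the tail induction. Once the envelope is in hand, only $L^2$-convergence of $\phi(t)$ is used (dominated convergence on $\{|\xi|\ge\Xi\}$, uniform convergence on compacta elsewhere); Plancherel gives $L^2$-convergence of densities, and $L^1$-convergence is obtained by splitting at a large $M$ and combining tightness with H\"older on $[-M,M]$. No $L^1$-control on $\phi(t)$ (hence no $L^\infty$-convergence of densities) is needed, and Scheff\'e is not invoked.

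In short, your proposal identifies the right target but leaves the decisive estimate unproven; the paper's pointwise-envelope induction is precisely the device that converts (H1) into the needed uniform decay, and it is not recoverable from the Fisher-information inequalities you cite.
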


\begin{remark}\label{rem3}
  Some comments on the hypotheses (H1) and (H2) are in order.
  \begin{itemize}
  \item In view of $\QQ(\a)=1$, condition (H1) can be satisfied only if $r<\a$.
    Notice that (H1) becomes the weaker the smaller $r>0$ is;
    in fact, the sets $\{(x,y)| x^r+y^r\geq1\}\subset\RE^2$ exhaust the first quadrant as $r\searrow0$.
  \item The smoothness condition (H2) is not quite as restrictive as it may seem. 
    For instance, recall that the convolution of any probability density $f_0$ 
    with an arbitrary ``mollifier'' of finite Linnik-Fisher functional (e.g. a Gaussian)
    produces again a probability density of finite Linnik-Fisher functional.
  \end{itemize}
\end{remark}

\section{Proofs }\label{sct.proofs}

We continue to assume that the law of the random vector $(L,R)$ is given
and satisfies \eqref{eq.defalpha} with $\a\in(0,2]$, implying $\QQ(\a)=0$.

\subsection{Properties of the weights $\b_{j,n}$ (Lemma \ref{Lemma2bis})}
In this subsection we shall prove a generalization of a useful result
obtained in  \cite{GabettaRegazzini2006a}. 
Set
\begin{equation}\label{G_n}
  G_n=(I_1,\dots,I_{n-1},
  L_1,R_1,\dots,L_{n-1},R_{n-1}).
\end{equation}
and denote by $\CG_n$  the $\s$-algebra generated by $G_n$.

\begin{proposition}\label{PropConst} 
  If $\E[L^s+R^s] <+\infty$ for some $s>0$, then
  \[
  \E[M_n^{(s)}]=\E[\sum_{j=1}^n \beta_{j,n}^s]=\frac{\Gamma(n+\QQ (s))}{\Gamma(n)\Gamma(\QQ (s)+1)}
  \]
  and
  \[
  \E[M_{\n_t}^{(s)}]=\sum_{n\geq1}e^{-t}(1-e^{-t})^{n-1}\E[M_n^{(s)}]=e^{t\QQ (s)}.
  \]
  If in addition $\QQ (s)=0$, for some $s>0$, then 
  $M_n^{(s)}$ is a martingale with respect to $(\CG_n)_{n\geq 1}$. 
\end{proposition}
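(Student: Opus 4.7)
The plan is to exploit the one-step recursion for the weights $\b_{j,n}$ in \eqref{recursion} to derive a telescoping identity for $M_n^{(s)}$, then combine it with a routine generating-function computation to handle the $\n_t$ case.

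First I would observe that the recursion in \eqref{recursion} gives a clean increment formula: at step $n\to n+1$, the $I_n$-th weight $\b_{I_n,n}$ is replaced by the two weights $L_n\b_{I_n,n}$ and $R_n\b_{I_n,n}$, while all other weights are unchanged. Taking $s$-th powers and summing yields
\[
M_{n+1}^{(s)} = M_n^{(s)} + \bigl(L_n^s + R_n^s - 1\bigr)\b_{I_n,n}^s.
\]
Next I would condition on $\CG_n$ defined in \eqref{G_n}. Since $(L_n,R_n,I_n)$ is independent of $\CG_n$ and $I_n$ is uniform on $\{1,\dots,n\}$, one has $\E[\b_{I_n,n}^s\mid \CG_n] = n^{-1}\sum_{j=1}^n \b_{j,n}^s = M_n^{(s)}/n$, while $\E[L_n^s+R_n^s-1]=\QQ(s)$. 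Hence
\[
\E\bigl[M_{n+1}^{(s)}\mid \CG_n\bigr] \;=\; M_n^{(s)}\,\frac{n+\QQ(s)}{n}.
\]

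This identity immediately yields the martingale property: if $\QQ(s)=0$, then $\E[M_{n+1}^{(s)}\mid\CG_n] = M_n^{(s)}$, and since $M_n^{(s)}$ is clearly $\CG_n$-measurable (the $\b_{j,n}$ are deterministic functions of $G_n$) and non-negative (in particular integrable), $M_n^{(s)}$ is a $(\CG_n)$-martingale. Taking unconditional expectations and iterating from $M_1^{(s)}=1$ gives
\[
\E[M_n^{(s)}] = \prod_{k=1}^{n-1}\frac{k+\QQ(s)}{k} = \frac{\Gamma(n+\QQ(s))}{\Gamma(n)\,\Gamma(1+\QQ(s))},
\]
which is the first displayed formula.

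Finally, for the $\nu_t$-average I would plug this closed form into the series defining $\E[M_{\n_t}^{(s)}]$ and recognize the generalized binomial series. With $x:=1-e^{-t}\in[0,1)$ and $a:=1+\QQ(s)$, and the standard identity $(1-x)^{-a}=\sum_{m\geq0}\frac{\Gamma(m+a)}{\Gamma(a)\,m!}\,x^m$ (shifting the index $n=m+1$), one obtains
\[
\E[M_{\n_t}^{(s)}] \;=\; e^{-t}\sum_{m\geq0}\frac{\Gamma(m+1+\QQ(s))}{\Gamma(1+\QQ(s))\,m!}\,(1-e^{-t})^m \;=\; e^{-t}\,\bigl(e^{-t}\bigr)^{-(1+\QQ(s))} \;=\; e^{t\QQ(s)}.
\]
There is no real obstacle here; the only thing to watch is the validity of the binomial identity when $\QQ(s)$ is negative (which happens for $s\neq\a$), but this is fine as long as $1+\QQ(s)>0$, i.e.\ $\E[L^s+R^s]>0$, which is automatic under the standing integrability assumption. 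The martingale part is essentially free once the one-step conditional identity above is in hand.
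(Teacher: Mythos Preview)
Your proof is correct and follows essentially the same route as the paper: both derive the one-step conditional identity $\E[M_{n+1}^{(s)}\mid\CG_n]=M_n^{(s)}(1+\QQ(s)/n)$ from the recursion \eqref{recursion}, deduce the martingale property and the Gamma-product formula, and then sum the resulting series. The only cosmetic difference is that you evaluate the final series explicitly via the generalized binomial expansion $(1-x)^{-a}=\sum_{m\ge0}\frac{\Gamma(m+a)}{\Gamma(a)\,m!}x^m$, whereas the paper cites a table of series; your version is a bit more self-contained.
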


\begin{proof} 
  Recall that $(\b_{1,1},\b_{1,2},\b_{2,2},\dots,\b_{n,n})$ is $\CG_n$--measurable, see 
 (\ref{recursion}).
  We first prove that $\E[M_{n+1}^{(s)}|\CG_n]=M_n^{(s)}(1+\QQ (s)/n)$, 
  which implies
  that $M_n^{(s)}$ is a $(\CG_n)_n$--martingale whenever $\QQ (s)=0$,
  since $M_n^{(s)} \geq 0$ and, as we will see, $\E[M_n^{(s)}]<+\infty$ for every $n\geq 1$.
  To prove the claim write
  \begin{align*}
    \E[M_{n+1}^{(s)}|\CG_n]
    & =\E\Big[\sum_{i=1}^{n}\J\{I_n=i\}\sum_{j=1}^{n+1}\beta_{j,n+1}^s\Big|\CG_n\Big] \\
    &=\E\Big[\sum_{i=1}^{n}\J \{I_n=i\}  \Big(\sum_{j=1,\dots,n+1,j\not=i,i+1}
    \beta_{j,n+1}^s+\beta_{i,n+1}^s +\beta_{i+1,n+1}^s\Big)\Big|\CG_n\Big]\\
    &=\E\Big[\sum_{i=1}^{n}\J \{I_n=i\}  \Big(\sum_{j=1}^n \beta_{j,n}^s
    +\beta_{i,n}^s(L_n^s+R_n^s-1)\Big)\Big|\CG_n\Big]\\
    &=M_n^{(s)}+\QQ (s)\E\Big[\sum_{i=1}^{n}\J \{I_n=i\}\beta_{i,n}^s|\CG_n\Big] \\
    &= M_n^{(s)} + \QQ(s) \sum_{i=1}^{n}\beta_{j,n}^s \E[\J \{I_n=i\}]
    =M_n^{(s)}(1+\QQ (s)/n).
  \end{align*}
  Taking the expectation of both sides one gets
  \[
  \E[M_{n+1}^{(s)}]=  \E[M_{n}^{(s)}](1+\frac{1}{n}\QQ (s)).
  \]
  Since $\E[M_2^{(s)}]=\QQ (s)+1$ it follows easily that
  \[
  \E[M_{n}^{(s)}]=\prod_{i=1}^{n-1}(1+\QQ (s)/i)
  =\frac{\Gamma(n+\QQ (s))}{\Gamma(n)\Gamma(\QQ (s)+1)}.
  \]
  To conclude the proof  use formula 5.2.13.30 in \cite{prudnikov}. 
\end{proof}


\begin{lemma}\label{Lemma1} 
  If $\QQ (\gamma)<0$ for some $\gamma>0$,
  then
  \[
  \b_{(n)}:=\max_{1 \leq j \leq n} \b_{j,n}
  \]
  converges to zero in probability as $n \to +\infty$. 
\end{lemma}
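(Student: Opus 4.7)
The plan is to exploit the elementary bound $\b_{(n)}^\gamma \leq \sum_{j=1}^n \b_{j,n}^\gamma = M_n^{(\gamma)}$, which holds because all weights are non-negative, and then to use the exact moment identity from Proposition \ref{PropConst} together with Markov's inequality.

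More concretely, I would first observe that, for every $\eps > 0$,
\begin{equation*}
P\{\b_{(n)} > \eps\} = P\{\b_{(n)}^\gamma > \eps^\gamma\} \leq \eps^{-\gamma}\,\E[\b_{(n)}^\gamma] \leq \eps^{-\gamma}\,\E[M_n^{(\gamma)}].
\end{equation*}
By Proposition \ref{PropConst} (applied with $s = \gamma$, which is admissible because $\QQ(\gamma) < 0$ implies in particular $\E[L^\gamma + R^\gamma] < \infty$),
\begin{equation*}
\E[M_n^{(\gamma)}] = \frac{\Gamma(n + \QQ(\gamma))}{\Gamma(n)\,\Gamma(\QQ(\gamma) + 1)}.
\end{equation*}
By the standard asymptotic $\Gamma(n+a)/\Gamma(n) \sim n^a$ as $n \to \infty$, and since $\QQ(\gamma) < 0$, the right-hand side tends to $0$. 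Combining this with the Markov estimate yields $P\{\b_{(n)} > \eps\} \to 0$, which is exactly convergence in probability.

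There is essentially no obstacle here: the whole argument reduces to the non-negativity of the weights (so that a max is bounded by a sum) plus the exact formula for $\E[M_n^{(\gamma)}]$ already established. The only subtlety worth flagging is that one must apply the identity at exponent $\gamma$ (where $\QQ(\gamma) < 0$), not at $\a$ (where $\QQ(\a) = 0$ and the bound $\E[M_n^{(\a)}] = 1$ from Lemma \ref{Lemma2bis} would be useless for this purpose).
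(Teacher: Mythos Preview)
Your proof is correct and is essentially identical to the paper's: both bound $\b_{(n)}^\gamma$ by $M_n^{(\gamma)}$, apply Markov's inequality, and invoke Proposition~\ref{PropConst} together with the asymptotic $\Gamma(n+\QQ(\gamma))/\Gamma(n)\sim n^{\QQ(\gamma)}\to 0$. The only cosmetic difference is that the paper first passes to the event $\{M_n^{(\gamma)}\geq\eps^\gamma\}$ and then applies Markov, whereas you apply Markov to $\b_{(n)}^\gamma$ and then bound its expectation by $\E[M_n^{(\gamma)}]$.
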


\begin{proof} 
  Observe that, for every $\eps>0$,
  \[
  P\{ \b_{(n)} >\eps \} \leq P \Big \{ \sum_{j=1}^n \b_{j,n}^\gamma \geq \eps^\gamma\Big  \}
  \]
  and hence, by Markov's inequality and Proposition \ref{PropConst},
  \[
  P\{ \b_{(n)} >\eps \} \leq  \frac{1}{\eps^\gamma} \E [M_n^{(\gamma)} ]
  =\frac{1}{\eps^\gamma}\frac{\Gamma(n+\QQ (\gamma))}{\Gamma(n)\Gamma(\QQ (\gamma)+1)} \leq C \frac{1}{\eps^\gamma} n^{\QQ (\gamma)}.
  \]
  The last expression tends to zero as $n\to\infty$ because $\QQ(\gamma)<0$.
\end{proof}

\begin{proof}[Proof of Lemma \ref{Lemma2bis}]
  Since $\QQ (\a)=0$, 
  the random variables $M_n^{(\a)}$ form a positive martingale with respect to $(\CG_n)_n$ by
  Proposition \ref{PropConst}.
  By the martingale convergence theorem, see e.g. Theorem 19 in 
  Chapter 24 of \cite{fristedgray},
  it converges a.s. to a positive random variable $M_\infty^{(\a)}$ with $\E[M_\infty^{(\a)}] \leq \E[M_1^{(\a)}]=1$.
  The goal of the following is to determine the law of $M_\infty^{(\alpha)}$
  in the different cases we consider.

  First, suppose that $L^\a+R^\a=1$ a.s. 
  It follows that $L^\a\leq 1$ and $R^\a \leq 1$ a.s., 
  and hence $\QQ (s) \leq \QQ(\a) = 0$ for all $s>\a$.
  Moreover, it is plain to check that $M_n^{(\a)}=1$ a.s. for every $n$, 
  and hence $M_\infty^{(\a)}=1$ a.s. 

  Next, assume that $\QQ(\gamma)< 0 $ for $\gamma<\a$. 
  Minkowski's inequality and Proposition \ref{PropConst} give
  \[
  \E \big[ (M_n^{(\a)})^{\gamma/\a}\big] \leq \E [\sum_{j=1}^n \b_{j,n}^\gamma ] = 
  \frac{\Gamma(n+\QQ (\gamma))}{\Gamma(n)\Gamma(\QQ (\gamma)+1)} \leq C n^{\QQ (\gamma)}.
  \]
  Hence, $M_n^{(\a)}$ converges a.s. to $0$. 
  
  It remains to treat the case with $\QQ(\gamma)<0$ and $\gamma>\a$. 
  Since $\QQ (\cdot)$ is a convex function satisfying $\QQ (\a)=0$ and $\QQ (\gamma)<0$ with $\gamma>\a$,
  it is clear that $\QQ'(\a)<0$; 
  also, we can assume without loss of generality that $\gamma<2\a$.
  Further, by hypothesis,
  \[
  \E[(L^\a+R^\a)^{1+(\gamma/\a-1)}]\leq 2^{\gamma/\a-1}\E[L^\gamma+R^\gamma]<+\infty .
  \]
  Hence, one can resort to  Theorem 2(a) of \cite{DurrettLiggett1983}
  --- see also  Corollaries 1.1, 1.4  and 1.5 in \cite{Liu1998} ---
  which provides existence and uniqueness of a probability distribution 
  $\nu_\infty\not=\delta_0$ on $\RE^+$, 
  whose characteristic function $\psi$ 
  is a  solution of  equation (\ref{ligget3bis}),
  with $\int_{\RE^+} x \nu_\infty(dx)=1$. 
  Moreover, Theorem 2.1 in \cite{Liu2000} ensures that 
  $\int_{\RE^+}x^{\gamma/\a} \nu_\infty(dx)<+\infty$ 
  and, more generally, that $\int_{\RE^+}x^{p/\a} \nu_\infty(dx)<+\infty$ for some $p>\a$ 
  if and only if $\QQ(p)<0$. 
 
  Consequently, our goal is to prove that the law of $M_\infty^{(\a)}$ is $\nu_\infty$.
  In what follows, enlarge the space $(\Omega,\CF,P)$ in order to contain
  all the random elements needed. 
  In particular, let $(M_j)_{j \geq 1}$ be a sequence of independent random variables 
  with common characteristic function $\psi$,
  such that $(M_j)_{j \geq 1}$ and $(G_n)_{n \geq 1}$, defined in (\ref{G_n}), are independent. 
  Recalling that $\psi$ is a solution of  (\ref{ligget3bis}) 
  it follows that, for every $n\geq 2$,
  \begin{align*}
    & \E\Big[\exp\Big\{i\xi \sum_{j=1}^n \b_{j,n}^\a M_j \Big\}\Big]  \\
    & = \sum_{k=1}^{n-1} \frac{1}{n-1}
    \E \Big [ \exp  \Big \{i\xi \Big ( \sum_{j=1}^{k-1} \b_{j,n-1}^\a M_j
    +\b_{k,n-1}^\a \underbrace{(L_{n-1}^\a M_k + R_{n-1}^\a M_{k+1}  )}_{=^d\,M_k}
    + \sum_{j=k+1}^{n} \b_{j,n-1}^\a M_j \Big)  \Big\} \Big]
    \\
    &= \E\Big[\exp\Big\{i\xi \sum_{j=1}^{n-1} \b_{j,n-1}^\a M_j \Big\}\Big]. 
  \end{align*}
  By induction on $n\geq2$, this shows that $\sum_{j=1}^n M_j \b_{j,n}^\a$ has the same law as $M_1$, 
  which is $\nu_\infty$.
  Hence
  \begin{align*}
    \W_{\gamma/\a}^{\gamma/\a}  (M_n^{(\a)},M_1) 
    \leq \E \Big[ \Big |\sum_{j=1}^{n} \b_{j,n}^\a - 
  \sum_{j=1}^{n} M_j \b_{j,n}^\a \Big|^{\gamma/\a} \Big] 
    =\E \Big[
    \E \bigg[ \Big |\sum_{j=1}^{n} (1 - M_j) \b_{j,n}^\a \Big|^{\gamma/\a}
    \bigg|\CG_n \bigg] \Big]. 
  \end{align*}
  We shall now employ the following result from \cite{BahrEsseen1965}.
  {\em Let $1<\eta \leq 2$, and assume that $Z_1, \dots , Z_n$ are 
    independent, centered random variables  and $\E|Z_j|^\eta<+\infty$.
    Then}
  \begin{equation}
    \label{bahr-essen2}
    \E\Big |\sum_{j=1}^n Z_j \Big |^\eta \leq  2  \sum_{j=1}^n \E| Z_j |^\eta .
  \end{equation}
  We apply this result with $\eta=\gamma/\a$ and $Z_j=\b_{j,n}^\a(1-M_j)$, 
  showing that
  \begin{align*}
    \E\Big [\Big |\sum_{j=1}^{n} (1 -M_j) \b_{j,n}^\a \Big |^{\gamma/\a} \Big |\CG_n \Big ]
    &\leq 2 \sum_{j=1}^{n} \b_{j,n}^{\gamma} \E\Big [\Big | 1 - M_j \Big |^{\gamma/\a}  
    \Big |\CG_n \Big ] = 2  \sum_{j=1}^{n} \b_{j,n}^{\gamma} \E|1-M_1|^{\gamma/\a}
  \end{align*}
  almost surely.
  In consequence,
  \begin{equation}
    \label{eqstima}
    \W_{\gamma/\a}^{\gamma/\a}  (M_n^{(\a)},M_1) \leq  
    2  \E \Big[\sum_{j=1}^{n} \b_{j,n}^{\gamma}\Big ] \E\Big [|1-M_1|^{\gamma/\a}\Big ].
  \end{equation}
  By means of Proposition \ref{PropConst}, one obtains
  \[
  \W_{\gamma/\a}^{\gamma/\a}  (M_n^{(\a)},M_1) \leq  C' n^{\QQ (\gamma)}.
  \]
  This proves that the law of $M_n^{(\a)}$ converges with respect to the $\W_{\gamma/\a}$ metric 
  -- and then also weakly --
  to the law of $M_1$.
  Hence, $M_\infty^{(\a)}$ has law $\nu_\infty$. 
  The fact that $M_\infty^{(\a)}$ is non-degenerate, provided $L^\a+R^\a=1$ does {\em not} hold a.s.,
  follows immediately.
\end{proof}

\subsection{Proof of convergence for $\alpha\neq1$ (Theorems \ref{thm2} and \ref{thm3-bis})}

Denote by $\CB$ the $\s$-algebra generated by $\{\beta_{j,n}: n \geq 1, j=1,\dots , n \}$.
The proof of Theorems \ref{thm2} and \ref{thm3-bis} is 
essentially an application of  the central limit theorem 
to the conditional law of 
\begin{align*}
  W_n := \sum_{j=1}^{n}\beta_{j,n}X_j
\end{align*}
given $\CB$. 
Set
\begin{align*}
  Q_{j,n}(x):=F_0\big(\b_{j,n}^{-1}x\big),
\end{align*}
where, by convention, $F_0(\cdot/0):=\J_{[0,+\infty)}(\cdot)$. 
In this subsection we will use the functions:
  \begin{align*}
    \z_n(x) &:= 
    \J\{x<0\} \sum_{j=1}^{n} Q_{j,n}(x) + \J\{x>0\} \sum_{j=1}^{n} (1-Q_{j,n}(x)) \qquad (x \in \RE) \\
    \s_{n}^2(\eps) &:=  
    \sum_{j=1}^{n}\Big\{ \int_{(-\eps,+\eps]} x^2\,dQ_{j,n}(x)-\Big( \int_{(-\eps,+\eps]} x\,dQ_{j,n}(x)\Big)^2 \Big\} \qquad (\eps>0) \\
    \eta_{n} &: =   \sum_{j=1}^{n}\Big\{ 1- Q_{j,n}(1) -Q_{j,n}(-1) + \int_{(-1,1]} x\, dQ_{j,n}(x)\Big \}. \\
  \end{align*}
In terms of $Q_{j,n}$,
the conditional distribution function $F_n$ of $W_n$ given $\CB$
is the convolution,
\[
F_n = Q_{1,n} *\cdots *Q_{n,n}.
\]
To start with, we show that the $Q_{j,n}$s satisfy 
the uniform asymptotic negligibility (UAN) assumption \eqref{UAN} below.
\begin{lemma}
  \label{lemma3} 
  Let the assumptions of Theorem \ref{thm3-bis} or Theorem \ref{thm2-bis} be in force. 
  Then, for every divergent sequence $(n')$ of integer numbers, 
  there exists a divergent subsequence $(n'') \subset (n')$ 
  and a set $\Omega_0$ of probability one such that 
  \begin{equation}\label{conMn}
    \begin{split}
      &\lim_{n'' \to +\infty }M_{n''}^{(\a)}(\omega)=M_\infty^{(\a)}(\omega) < \infty, \\
      &\lim_{n'' \to +\infty }\b_{(n'')}(\omega)=0,\\
    \end{split}
  \end{equation}
  holds for every $\omega\in\Omega_0$.
  Moreover, for every $\omega\in\Omega_0$ and for every $\eps>0$
  \begin{equation}\label{UAN}
    \lim_{n'' \to +\infty} 
    \max_{1\leq j \leq n''}\Big\{ 1-Q_{j,n''}(\eps) + Q_{j,n''}(-\eps)  \Big\}=0.
  \end{equation}
\end{lemma}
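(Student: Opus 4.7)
The plan is to combine the almost-sure martingale convergence of $M_n^{(\a)}$ from Lemma~\ref{Lemma2bis} with the in-probability vanishing of $\b_{(n)}$ from Lemma~\ref{Lemma1}, pass to a common almost-sure subsequence, and then read off the uniform asymptotic negligibility condition directly from the monotonicity of $F_0$.

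First, Lemma~\ref{Lemma2bis} fixes a set $\Omega_1$ of probability one on which $M_n^{(\a)}\to M_\infty^{(\a)}$; the finiteness $M_\infty^{(\a)}<\infty$ almost surely is automatic because $\E[M_\infty^{(\a)}]\leq 1$. This convergence is inherited by every subsequence, so in particular along any $(n')$ we are handed. The hypothesis $\QQ(\gamma)<0$ for some $\gamma>0$ present in both Theorem~\ref{thm3-bis} and Theorem~\ref{thm2-bis} places us in the scope of Lemma~\ref{Lemma1}, which only yields $\b_{(n)}\to 0$ in probability. From the given divergent sequence $(n')$ I would therefore extract, by the standard subsequence trick, a further divergent subsequence $(n'')$ along which $\b_{(n'')}\to 0$ almost surely on some $\Omega_2$ of probability one. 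Setting $\Omega_0:=\Omega_1\cap\Omega_2$ delivers both conclusions in \eqref{conMn} on a set of full probability.

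For \eqref{UAN}, fix $\omega\in\Omega_0$ and $\eps>0$. Indices $j$ with $\b_{j,n''}(\omega)=0$ contribute zero to the maximum by the convention $F_0(\cdot/0)=\J_{[0,+\infty)}(\cdot)$. For indices with $\b_{j,n''}(\omega)>0$, the monotonicity of $F_0$ together with the trivial bound $\b_{j,n''}\leq \b_{(n'')}$ yields
\[
1-Q_{j,n''}(\eps)+Q_{j,n''}(-\eps)=1-F_0\bigl(\eps/\b_{j,n''}\bigr)+F_0\bigl(-\eps/\b_{j,n''}\bigr)\leq 1-F_0\bigl(\eps/\b_{(n'')}\bigr)+F_0\bigl(-\eps/\b_{(n'')}\bigr).
\]
The right-hand side is independent of $j$, and tends to zero as $\b_{(n'')}(\omega)\to 0$ because $F_0$ is a probability distribution function on $\RE$. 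Taking the maximum over $j$ and then letting $n''\to+\infty$ gives \eqref{UAN}.

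I do not foresee a real obstacle here; the only mildly delicate points are the extraction of an almost-sure subsequence from the in-probability convergence of $\b_{(n)}$, and the separate bookkeeping for indices with $\b_{j,n}=0$ under the convention on $Q_{j,n}$. Everything else is just the monotonicity of $F_0$ applied uniformly in $j$ via $\b_{j,n}\leq\b_{(n)}$.
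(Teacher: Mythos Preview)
Your proposal is correct and follows essentially the same route as the paper: invoke Lemma~\ref{Lemma2bis} for the almost-sure convergence of $M_n^{(\a)}$, use Lemma~\ref{Lemma1} together with the standard subsequence extraction to upgrade $\b_{(n)}\to0$ in probability to almost-sure convergence along $(n'')$, and then bound the maximum in \eqref{UAN} by $1-F_0(\eps/\b_{(n'')})+F_0(-\eps/\b_{(n'')})$ via monotonicity of $F_0$. Your explicit treatment of indices with $\b_{j,n}=0$ under the convention $F_0(\cdot/0)=\J_{[0,+\infty)}(\cdot)$ is a harmless refinement that the paper leaves implicit.
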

\begin{proof}
  The existence of a sub-sequence $(n'')$ and a set $\Omega_0$
  satisfying (\ref{conMn}) is a direct consequence 
  of Lemmata \ref{Lemma1} and  \ref{Lemma2bis}.
  To prove (\ref{UAN}) note that, for $0< \a \leq 2$ and $\a \not = 1$,
  \[
  \max_{1\leq j \leq n''} \Big(1-Q_{j,n''}(\eps)+Q_{j,n''}(-\eps)\Big )
  \leq 1-F_0\Big (\frac{\eps}{\beta_{(n'')}}\Big)
  +F_0\Big (\frac{-\eps}{\beta_{(n'')}}\Big).
  \]
  The claim hence follows from (\ref{conMn}).
\end{proof}
\begin{lemma}
  \label{lemma4_2} 
  Let the assumptions of Theorem \ref{thm2} be in force.
  Then for every divergent sequence $(n')$ of integer numbers 
  there exists a divergent subsequence $(n'') \subset (n')$
  and a measurable set $\Omega_0$ of probability one 
  such that 
  \begin{align}
    \label{eq.char02}
    \lim_{n'' \to +\infty} \E[ e^{ i \xi W_{n''}}|\CB](\omega)    
    =\exp\{ -|\xi|^\a k M_{\infty}^{(\a)}(\omega) (1-i \eta \tan(\pi\a/2)\operatorname{sign}\xi)   \}   
    \quad (\xi \in \RE )
  \end{align}
  for every $\omega$ in $\Omega_0$.
\end{lemma}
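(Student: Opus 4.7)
The plan is to apply the classical central limit theorem for triangular arrays of independent summands (Gnedenko's theorem, see Chapter 17 of \cite{fristedgray}), performed conditionally on $\CB$, along the subsequence $(n'')$ and on the probability-one set $\Omega_0$ furnished by Lemma \ref{lemma3}. Fix $\omega\in\Omega_0$ throughout; the quantities $\b_{j,n''}$, $M_{n''}^{(\a)}$ and $M_\infty^{(\a)}$ then become deterministic. Conditionally on $\CB$, the summands $\b_{j,n''}X_j$ are independent with distribution functions $Q_{j,n''}$, and \eqref{UAN} is precisely the uniform-asymptotic-negligibility hypothesis required by Gnedenko's theorem. What remains is to verify: (i) pointwise convergence of $\zeta_{n''}(x)$ at every $x\ne 0$; (ii) $\lim_{\eps\downarrow0}\limsup_{n''}\s_{n''}^2(\eps)=0$, ruling out a Gaussian component in the limit; and (iii) convergence of $\eta_{n''}$ to the appropriate drift.

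For (i), the identity $Q_{j,n''}(x)=F_0(x/\b_{j,n''})$ together with $\b_{(n'')}\to0$ gives, for $x>0$,
\[
\zeta_{n''}(x) = c^+ x^{-\a}\, M_{n''}^{(\a)} + \sum_{j=1}^{n''}\bigl[1-F_0(x/\b_{j,n''})-c^+(\b_{j,n''}/x)^\a\bigr].
\]
By \eqref{stabledomain}, for every $\delta>0$ one has $|1-F_0(y)-c^+y^{-\a}|<\delta y^{-\a}$ for $y$ sufficiently large; since $\b_{(n'')}\to0$, this bound eventually applies uniformly in $j$, and so the error sum is controlled by $\delta x^{-\a}\,M_{n''}^{(\a)}$. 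Letting $n''\to\infty$ and then $\delta\downarrow0$, and using $M_{n''}^{(\a)}\to M_\infty^{(\a)}$, one obtains $\zeta_{n''}(x)\to c^+ x^{-\a} M_\infty^{(\a)}$; the symmetric computation treats $x<0$. A parallel tail computation handles (ii): the classical consequence $\int_{|y|\leq R}y^2\,dF_0(y)\sim\frac{\a(c^++c^-)}{2-\a}R^{2-\a}$ of \eqref{stabledomain} yields $\s_{n''}^2(\eps)\to\frac{\a(c^++c^-)}{2-\a}\eps^{2-\a}M_\infty^{(\a)}$, which vanishes as $\eps\downarrow0$ because $\a<2$.

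For (iii), I treat $\a<1$ and $\a>1$ separately. When $\a<1$, an analogous tail estimate applied to $\int_{|y|\leq R}y\,dF_0(y)$ yields explicit convergence of $\eta_{n''}$ to a limit determined by $c^\pm$ and $M_\infty^{(\a)}$. When $\a>1$, the assumption $\E[X_0]=0$ rewrites the truncated mean as a pure tail integral, to which the same asymptotics from \eqref{stabledomain} apply, giving the convergence of $\eta_{n''}$. With (i)--(iii) in force, Gnedenko's theorem produces the convergence of the conditional characteristic function to the L\'evy--Khintchine exponential of the identified characteristic triple; direct identification using \eqref{chaSta} and \eqref{constant}, with the scale parameter $k$ replaced by $k M_\infty^{(\a)}(\omega)$, recovers exactly the right-hand side of \eqref{eq.char02}.

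The chief obstacle is the case $\a\in(1,2)$ in step (iii): the L\'evy--Khintchine centering implicit in the standard stable-law expression \eqref{chaSta} differs from the particular truncation used to define $\eta_{n''}$, and one must reconcile the two by using $\E[X_0]=0$ and the finite $p$-th moments of $X_0$ for $p<\a$ guaranteed by \eqref{stabledomain}. A secondary but pervasive technical point is the uniformity of the tail estimates over $j$: it is crucial that the threshold $Y=Y(\delta)$ appearing in (i)--(iii) depends only on $\delta$ and not on $j$, which is what makes the condition $\b_{(n'')}\to 0$ from \eqref{conMn} so central to every step.
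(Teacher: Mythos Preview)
Your proposal is correct and follows essentially the same strategy as the paper: invoke Lemma~\ref{lemma3} for the subsequence, then verify the hypotheses of the general central limit theorem (Chapter~17 of \cite{fristedgray}) conditionally on $\CB$ via the tail hypothesis \eqref{stabledomain}. The only notable difference is tactical: in step~(iii) the paper integrates by parts to express $\eta_{n''}$ as an integral of $\zeta_{n''}$ over $(-1,1)$ (for $\a<1$) or over $\{|x|>1\}$ (for $\a>1$, using $\E[X_0]=0$) and then applies dominated convergence with the pointwise limits already obtained in step~(i) and the envelope $|\zeta_{n''}(x)|\le K|x|^{-\a}\sup_{n}M_{n}^{(\a)}$, which is slightly more economical than your direct Karamata-type asymptotics for the truncated first moment.
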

\begin{proof}
  Let $(n'')$ and $\Omega_0$ be the same as in Lemma \ref{lemma3}.
  To prove \eqref{eq.char02}, 
  we apply the central limit theorem for every $\omega\in\Omega_0$
  to the conditional law of $W_{n''}$ given $\CB$. 

  For every $\omega$ in $\Omega_0$,
  we know that $F_{n''}$ is a convolution of probability distribution functions 
  satisfying the asymptotic negligibility assumption (\ref{UAN}).
  Here, we shall use the general version of the central limit theorem as presented
  e.g., in Theorem 30 in Section 16.9 and in Proposition 11 in Section 17.3 of \cite{fristedgray}.
  According to these results, the claim \eqref{eq.char02} follows if, 
  for every $\omega\in\Omega_0$, 
  \begin{align}
    \label{condition1-uno}
    \lim_{n'' \to +\infty } \z_{n''}(x) &= \frac{c^{+}M_\infty^{(\a)}}{x^\a}  \qquad (x>0), \\
    \label{condition1-due}
    \lim_{n'' \to +\infty} \z_{n''}(x) &= \frac{c^{-} M_\infty^{(\a)}}{|x|^\a} \qquad (x<0), \\
    \label{clt3condBis}
    \lim_{\eps \to 0^+}\limsup_{n'' \to +\infty} \s^{2}_{n''}(\eps) &= 0 , \\
    \label{condition3}
    \lim_{n'' \to +\infty} \eta_{n''} &= \frac{1}{1-\a} M_\infty^{(\a)}(c^+-c^-)
  \end{align}
  are simultaneously satisfied.

  In what follows we assume that $P\{L=0\}=P\{R=0\}=0$,
  which yields that $\b_{j,n}>0$ almost surely. 
  The general case can be treated with minor modifications. 

  In order to prove (\ref{condition1-uno}),
  fix some $x>0$, and observe that
  \[
  \z_{n''}(x)=\sum_{j=1}^{n''} [1-F_0(\b_{j,n''}^{-1}x)]
  = \sum_{j=1}^{n''} [1-F_0(\b_{j,n''}^{-1}x)] (\b_{j,n''}^{-1}x)^\a \frac{\b_{j,n''}^\a}{x^\a}.
  \]
  Since $\lim_{y \to +\infty}(1-F_0(y))y^\a=c^+$ by assumption \eqref{stabledomain}, 
  for every $\eps>0$ there exists a $Y=Y(\eps)$ such that if $y>Y$, 
  then $c^+-\eps \leq (1-F_0(y))y^\a \leq c^++\eps $. 
  Hence if $x >\b_{(n'')}Y$, then
  \begin{align}
    \label{eq.thirtyeightandonehalf}
    x^{-\alpha}(c^+-\eps) M_{n''}^{(\a)} \leq \sum_{j=1}^{n''}(1-F_0(\b_{j,n''}^{-1}x))\leq x^{-\alpha}(c^++\eps) M_{n''}^{(\a)}.
  \end{align}
  In view of \eqref{conMn}, the claim (\ref{condition1-uno}) follows immediately.
  Relation (\ref{condition1-due}) is proved in a completely analogous way. 

  In order to prove (\ref{clt3condBis}), 
  it is clearly sufficient to show that for every $\eps>0$
  \begin{align}
    \label{eq.sigma}
    \limsup_{n'' \to +\infty}
   \sum_{j=1}^{n''} \int_{(-\epsilon,+\epsilon]} x^2 dQ_{j,n''}(x) \leq C M_{\infty}^{(\a)}\epsilon^{2-\alpha} 
  \end{align}
  with some constant $C$ independent of $\epsilon$.
  Recalling the definition of $Q_{j,n}$,
  an integration by parts gives
  \begin{align*}
    \int_{(0,\epsilon]} x^2 dF_0\big( \b_{j,n}^{-1} x \big) 
    = - \epsilon^2\big[1-F_0\big( \b_{j,n}^{-1}\epsilon \big)\big] 
    + 2 \int_0^\epsilon x \big[1-F_0\big( \b_{j,n}^{-1} x \big)\big]\,dx ,
  \end{align*}
  and similarly for the integral from $-\epsilon$ to zero.
  With
  \begin{align}
    \label{eq.defK}
    K := \sup_{x>0} x^\alpha [1-F(x)] + \sup_{x<0} (-x)^\alpha F(x),
  \end{align}
  which is finite by hypothesis \eqref{stabledomain},
  it follows that
  \begin{align*}
    \int_{(-\epsilon,+\epsilon]} x^2 dF_0\big( \b_{j,n}^{-1} x \big) 
    \leq 2K\epsilon^{2}(\b_{j,n}^{-1}\epsilon)^{-\a} + 4K\b_{j,n}^\a \int_0^\epsilon x^{1-\a}\,dx
    \leq 2K\Big( 1 + \frac{2}{2-\a} \Big) \b_{j,n}^\a \epsilon^{2-\a} .
  \end{align*}
  To conclude \eqref{eq.sigma},
  it suffices to recall that $\sum_j \b_{j,n''}^\a = M_{n''}^{(\a)} \to M_\infty^{(\a)}$ by \eqref{conMn}.

  In order to prove (\ref{condition3}), 
  we need to distinguish if $0<\a<1$, or if $1<\a<2$.
  In the former case,
  integration by parts in the definition of $\eta_{n''}$ reveals
  \[
  \eta_{n''} = \int_{-1}^1 \z_{n''}(x) dx.
  \]
  Having already shown \eqref{condition1-uno} and \eqref{condition1-due},
  we know that the integrand converges pointwise with respect to $x$.
  The dominated convergence theorem applies since,
  by hypothesis \eqref{stabledomain},
  \begin{align}
    \label{eq.z}
    |\z_{n''}(x) |\leq K|x|^{-\a} \sup_{n''}M_{n''}^{(\a)}
  \end{align}
  with the constant $K$ defined in \eqref{eq.defK};
  observe that $|x|^{-\a}$ is integrable on $(-1,1]$ since we have assumed $0<\a<1$.
  Consequently,
  \begin{align*}
    \lim_{n''\to\infty} \eta_{n''}
    = c^{-}M_\infty^{(\a)} \int_{-1}^0 |x|^{-\a}\,dx + c^+ M_\infty^{(\a)} \int_0^1 |x|^{-\a}\,dx 
    = \frac{c^+ - c^-}{1-\alpha}M_\infty^{(\a)} .
  \end{align*}
  It remains to check (\ref{condition3}) for $1<\a<2$. 
  Since  $\int_\RE x\,dQ_{j,n''}(x)=0$, one can write
  \[
  \eta_{n''}=\eta_{n''}-\sum_{j=1}^{n''}\int_\RE x\,dQ_{j,n''}(x)
  = -\sum_{j=1}^{n''}\int_{(-\infty,-1]}(1+x)dQ_{j,n''}(x)
  -\sum_{j=1}^{n''}\int_{(1,+\infty)}(x-1)dQ_{j,n''}(x).
  \]
  Similar as for $0<\a<1$, integration by parts reveals that
  \begin{align}
    \label{eq.etafromzeta}
    \eta_{n''} = \int_{\{|x|>1\}} \z_{n''}(x)\,dx.
  \end{align}
  From this point on, the argument is the same as in the previous case:
  \eqref{condition1-uno} and \eqref{condition1-due} provide pointwise convergence of the integrand;
  hypothesis \eqref{stabledomain} leads to \eqref{eq.z}, 
  which guarantees that the dominated convergence theorem applies,
  since $|x|^{-\a}$ is integrable on the set $\{ |x| > 1 \}$.
  It is straightforward to verify that the integral of the pointwise limit
  indeed yields the right-hand  side of \eqref{condition3}.
\end{proof}

\begin{proof}[Proof of Theorem \ref{thm2}] 
  By Lemma \ref{lemma4_2} and the dominated convergence theorem,
  every divergent sequence $(n')$ of integer numbers 
  contains a divergent subsequence $(n'') \subset (n')$ 
  for which
  \begin{align}
    \label{eq.thm2limit}
    \lim_{n'' \to +\infty} \E[ \exp\{ i\xi W_{n''} \}]
    =\E\big[\exp\big\{-|\xi|^\a k  M_{\infty}^{(\a)}(1-i \eta \tan(\pi\a/2) \operatorname{sign}\xi)\big\}\big],
  \end{align}
  where the limit is pointwise in $\xi \in \RE$. 
  Since the limiting function is independent of the arbitrarily chosen sequence $(n')$,
  a classical argument shows that \eqref{eq.thm2limit} is true with $n\to+\infty$ in place of 
  $n''\to+\infty$.
  In view of Proposition \ref{Prop:probint}, the stated convergence follows. 
  
  By  Lemma \ref{Lemma2bis},
  the assertion about (non)-degeneracy of $V_\infty$ follows immediately 
  from the representation \eqref{characteristic}.
  To verify the claim about moments for $\gamma>\a$,
  observe that \eqref{characteristic} implies that
  \begin{equation}
    \label{eq.momcalc}
      \E[|V_\infty|^p] = \int_\RE |x|^p dF_\infty (x)
     = \E\big[\big(M^{(\a)}_\infty\big)^p\big] \int_\RE |u|^p dG_\a(u),
     \end{equation}
where $G_\a$ is the distribution function of the centered $\a$-stable law
with characteristic function $\hat g_\a$ defined in \eqref{chaSta}.

  The $p$-th moment of $M^{(\a)}_\infty $ is finite at least for all $p<\gamma$ by Lemma \ref{Lemma2bis}.
  On the other hand, the $p$-th moment of $G_\a$ is finite if and only if $p<\a$.
\end{proof}

The following lemma replaces Lemma \ref{lemma4_2} in the case $\a=2$.

\begin{lemma}\label{lemma4_5}
  Let the assumptions of Theorem \ref{thm3-bis} hold.
  Then for every divergent sequence $(n')$ of integer numbers,
  there exists a divergent subsequence $(n'') \subset (n')$ 
  and a set $\Omega_0$ of probability one
  such that 
  \[
  \lim_{n'' \to +\infty} \E[ e^{ i \xi W_{n''}}|\CB](\omega)   
  =e^{- \xi^2 \frac{\s^2}{2} M_{\infty}^{(2)} (\omega)} \quad (\xi \in \RE)
  \]
  for every $\omega$ in $\Omega_0$.
\end{lemma}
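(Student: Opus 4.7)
The plan is to mirror the proof of Lemma \ref{lemma4_2}, replacing the stable-law central limit theorem by the classical conditional Lindeberg--Feller theorem for triangular arrays, since the finite second moment of $X_0$ places us squarely in the Gaussian regime. First, by proceeding exactly as in Lemma \ref{lemma3}, I extract from the given divergent sequence $(n')$ a further divergent subsequence $(n'') \subset (n')$ and a set $\Omega_0$ of probability one on which $M_{n''}^{(2)}(\omega) \to M_\infty^{(2)}(\omega) < \infty$, $\b_{(n'')}(\omega) \to 0$, and the uniform asymptotic negligibility condition \eqref{UAN} holds. The objective is then to apply the Lindeberg--Feller theorem, for each fixed $\omega \in \Omega_0$, to the triangular array with row distributions $Q_{j,n''}(\cdot,\omega)=F_0(\b_{j,n''}(\omega)^{-1}\cdot)$. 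As in the proof of Lemma \ref{lemma4_2}, one may assume without loss of generality that $P\{L=0\}=P\{R=0\}=0$, so that $\b_{j,n''}>0$ almost surely; the general case only requires minor modifications.

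The verification of the Lindeberg--Feller hypotheses is simplified by the fact that $X_0$ has mean zero and variance $\s^2<\infty$. A direct change of variables shows
\[
\int_\RE x\,dQ_{j,n}(x) = \b_{j,n}\,\E[X_0] = 0, \qquad \int_\RE x^2\,dQ_{j,n}(x) = \b_{j,n}^2\,\s^2,
\]
so that the total conditional variance satisfies
\[
\sum_{j=1}^{n''} \int_\RE x^2\,dQ_{j,n''}(x) \;=\; \s^2 M_{n''}^{(2)} \;\longrightarrow\; \s^2 M_\infty^{(2)}
\]
on $\Omega_0$. For the Lindeberg tail condition, the same change of variables $y=x/\b_{j,n''}$ yields
\[
\sum_{j=1}^{n''} \int_{\{|x|>\eps\}} x^2\,dQ_{j,n''}(x) \;=\; \sum_{j=1}^{n''} \b_{j,n''}^2 \int_{\{|y|>\eps/\b_{j,n''}\}} y^2\,dF_0(y) \;\leq\; M_{n''}^{(2)} \int_{\{|y|>\eps/\b_{(n'')}\}} y^2\,dF_0(y),
\]
and the right-hand side tends to zero on $\Omega_0$, because $\b_{(n'')}\to 0$ and $\int y^2\,dF_0(y)=\s^2<\infty$ forces the tail integral to vanish by dominated convergence.

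With these ingredients in place, the conditional Lindeberg--Feller theorem, applied pointwise for each $\omega\in\Omega_0$ to the sum $W_{n''}=\sum_{j=1}^{n''}\b_{j,n''}(\omega)X_j$ viewed as a sum of independent centered random variables with variances $\b_{j,n''}(\omega)^2\s^2$, yields that the conditional law of $W_{n''}$ given $\CB$ converges weakly to a centered Gaussian with variance $\s^2 M_\infty^{(2)}(\omega)$, whose characteristic function is precisely $\exp\{-\xi^2\s^2 M_\infty^{(2)}(\omega)/2\}$, as required. I do not anticipate any significant obstacle here: the argument is strictly easier than that of Lemma \ref{lemma4_2}, because the finite second moment of $X_0$ bypasses the delicate truncation and tail-expansion analysis that was needed in the heavy-tailed setting.
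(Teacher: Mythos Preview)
Your proof is correct. Both your argument and the paper's rest on the same two computations: the total conditional variance $\sum_j\b_{j,n''}^2\s^2=\s^2 M_{n''}^{(2)}\to\s^2 M_\infty^{(2)}$, and the tail bound $\sum_j\b_{j,n''}^2\int_{\{|y|>\eps/\b_{j,n''}\}}y^2\,dF_0(y)\le M_{n''}^{(2)}\int_{\{|y|>\eps/\b_{(n'')}\}}y^2\,dF_0(y)\to0$. The difference is only in the packaging. The paper stays within the general CLT framework for infinitely divisible limits used in Lemma~\ref{lemma4_2}, and therefore separately verifies that the L\'evy-measure contribution vanishes (\,$\z_{n''}(x)\to0$\,), that the truncated variance $\s_{n''}^2(\eps)$ converges to $\s^2 M_\infty^{(2)}$, and that the centering $\eta_{n''}$ tends to zero. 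You instead invoke the Lindeberg--Feller theorem directly, which is the natural special case once $\E[X_0^2]<\infty$ and $\E[X_0]=0$: centering is automatic, the Lindeberg condition subsumes both the tail-measure and truncated-variance checks, and the result follows in fewer steps. The paper's route has the aesthetic advantage of treating the stable and Gaussian cases in a uniform way; yours is more elementary and to the point. One small remark: when $M_\infty^{(2)}(\omega)=0$ the limiting variance is zero, a degenerate case not always covered by textbook statements of Lindeberg--Feller, but then $\mathrm{Var}(W_{n''}\mid\CB)\to0$ gives convergence to $\delta_0$ directly, so the conclusion still holds.
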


\begin{proof} 
  Let $(n'')$ and $\Omega_0$ have the properties stated in Lemma \ref{lemma3}.
  The claim follows if for every $\omega$ in $\Omega_0$, 
  \begin{align}
    \label{condition1-uno-bis}
    \lim_{n'' \to +\infty} \z_{n''}(x) 
    &= 0 \qquad (x\not =0), \\
    \label{clt3condtris}
    \lim_{\eps \to 0^+}\lim_{n'' \to +\infty} \s^{2}_{n''}(\eps) &= \s^2 M_\infty^{(2)} , \\
    \label{condition3-bis}
    \lim_{n'' \to +\infty} \eta_{n''} &= 0
  \end{align}
  are simultaneously satisfied.
  
  First of all note that since
  \[
  y^2(1-F_0(y)) \leq \int_{(y,+\infty)}x^2dF_0(x) \quad \text{and}
  \quad y^2 F_0(-y) \leq   \int_{(-\infty,-y]}x^2dF_0(x) \quad (y>0)
  \]
  and $\int_\RE x^2dF_0(x)<+\infty$, 
  it follows that $\lim_{y \to +\infty } y^2(1-F_0(y)) = \lim_{y \to -\infty } y^2(F_0(y)) =0$. 
  Hence, given $\eps>0$, there exists a $Y=Y(\eps)$ 
  such that $y^2(1-F_0(y))<\eps$ for every $y>Y$. 
  Since
  \[
  \z_{n''}(x)=\sum_{j=1}^{n''}(\b_{j,n''}^{-1}x)^2
  (1-F_0(\b_{j,n''}^{-1}x)) \b_{j,n''}^2/x^2 \qquad (x>0),
  \]
  one gets
  \[
  \z_{n''}(x) \leq \eps M_{n''}^{(2)}\frac{1}{x^2}
  \]
  whenever $x>\beta_{(n'')}Y$. 
  In view of property \eqref{conMn},
  the first relation (\ref{condition1-uno-bis}) follows for $x>0$.
  The argument for $x<0$ is analogous.

  We turn to the proof of (\ref{clt3condtris}).
  A simple computation reveals
  \[
  s_{n''}^2(\eps)
  :=\sum_{j=1}^{n''} \int_{(-\eps,\eps]} x^2 dQ_{j,n''}(x)
  =\s^2M_{n''}^{(2)}- R_{n''} ,
  \]
  with the remainder term 
  \[ 
  R_{n''}
  :=\sum_{j=1}^{n''} \b_{j,n''}^2 \int_{|\b_{j,n''} x| >\eps} x^2 dF_0(x)
  \leq M_{n''}^{(2)} \int_{|\b_{(n'')} x| >\eps} x^2 dF_0(x).
  \]
  Invoking property \eqref{conMn} again, 
  it follows that $R_{n''}\to0$ as $n''\to\infty$; 
  recall that $F_0$ has finite second moment by hypothesis.
  Consequently,
  \begin{equation}\label{eq:con-gaus2-a}
    \lim_{n'' \to +\infty} s^{2}_{n''}(\eps) = \s^2 M_\infty^{(2)}
  \end{equation}
  for every $\eps$. 
  Since $\int_\RE x \,dQ_{j,n}(x)=0$,
  \[
  \sum_{j=1}^{n''} \Big(\int_{(-\eps,\eps]} x \,dQ_{j,n}(x)\Big )^2
  \leq \sum_{j=1}^{n''} \b_{j,n''}^2 \Big( \int_{|\b_{j,n''} x|\geq\eps} x dF_0(x) \Big)^2
  \leq \Big( \int_{|\b_{(n'')} x|\geq\eps} |x| dF_0(x) \Big)^2 \sum_{j=1}^{n''} \b_{j,n''}^2 ,
  \]
  which yields that
  \[
  \lim_{n'' \to +\infty} \sum_{j=1}^{n''} 
  \Big(\int_{(-\eps,\eps]} x \,dQ_{j,n}(x)\Big )^2=0.
  \]
  Combining this last fact with (\ref{eq:con-gaus2-a}) gives (\ref{clt3condtris}).

  Finally, in order to obtain (\ref{condition3-bis}),
  we use \eqref{condition1-uno-bis} and
  the dominated convergence theorem;
  the argument is the same as for \eqref{eq.etafromzeta}
  in the proof of Lemma \ref{lemma4_2}.
\end{proof}

\begin{proof}[Proof of Theorem \ref{thm3-bis}.]
  Use Lemma \ref{lemma4_5} and repeat the proof of Theorem \ref{thm2}.
  A trivial adaptation is needed in the calculation of moments if $\gamma>2$:
  consider \eqref{eq.momcalc} with $G_\a=G_2$, the distribution function of a 
  Gaussian law, and note that it posses finite moments of every order.
  Hence $\E[|V_\infty|^p]$ is finite if and only if $\E[(M^{(2)}_\infty)^p]$ is finite,
  which, by Lemma \ref{Lemma2bis}, is the case if and only if $\QQ(p)<0$.
\end{proof}

\subsection{Proof of convergence for $\alpha=1$ (Theorems \ref{thm1} and \ref{thm2-bis})}
Let us first prove Theorem \ref{thm2-bis}.
We shall apply the central limit theorem to the random variables
\begin{align*}
  W_n^* = \sum_{j=1}^n (\b_{j,n}X_j-q_{j,n})
\end{align*}
with $q_{j,n}$ defined in (\ref{centr-case1}).  
In what follows,
\begin{align*}
  Q_{j,n}(x) := F_0 \Big( \frac{x+q_{j,n}}{\b_{j,n}} \Big).
\end{align*}
The next Lemma is the analogue of Lemma \ref{lemma3} above.

\begin{lemma}
  \label{lemma3alfa=1} 
  Suppose the assumptions of Theorem 
  \ref{thm2-bis} are in force.
  Then, for every $\delta \in (0,1)$, 
  \begin{equation}\label{CENTRATURA}
    |q_{j,n}|= \Big| \int \sin(\beta_{j,n}s)dF_0(s) \Big| \leq C_{\delta}\beta_{j,n}^{1-\delta}
  \end{equation}
  with $C_\delta=\int_\RE |x|^{1-\d}dF_0(x) <+\infty$.
  Furthermore, for every divergent sequence $(n')$ of integer numbers,
  there exists a divergent subsequence
  $(n'') \subset (n')$ and a set $\Omega_0$ of probability one 
  such that for every $\omega$ in $\Omega_0$ and for every $\eps>0$,
  the properties \eqref{conMn} and \eqref{UAN} are verified.
\end{lemma}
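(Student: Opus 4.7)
My plan is to split the lemma into its two assertions --- the deterministic bound \eqref{CENTRATURA} and the extraction of a good subsequence for \eqref{conMn} and \eqref{UAN} --- and dispatch them in turn.

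For \eqref{CENTRATURA}, I would first record the elementary inequality $|\sin y|\leq|y|^{1-\d}$, valid for every $y\in\RE$ and every $\d\in(0,1)$: on $|y|\leq1$ one has $|\sin y|\leq|y|\leq|y|^{1-\d}$, while on $|y|\geq1$ one has $|\sin y|\leq 1\leq|y|^{1-\d}$. Moving the absolute value inside the integral defining $q_{j,n}$ and pulling out the factor $\b_{j,n}^{1-\d}$ would yield the claim, provided the constant $C_\d:=\int_\RE|s|^{1-\d}\,dF_0(s)$ is finite. Finiteness would follow from the hypothesis \eqref{stabledomain} with $\a=1$, which forces $1-F_0(x)+F_0(-x)=O(|x|^{-1})$ as $|x|\to+\infty$; an integration by parts then reduces matters to $\int^\infty s^{-1-\d}\,ds$, which converges for every $\d>0$.

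For the second part, I would extract the subsequence $(n'')\subset(n')$ and the set $\Omega_0$ of full measure satisfying \eqref{conMn} in exactly the same way as in the proof of Lemma \ref{lemma3}, relying only on Lemmata \ref{Lemma1} and \ref{Lemma2bis}. To verify \eqref{UAN} on $\Omega_0$, I would combine \eqref{CENTRATURA} with $\b_{j,n''}\leq\b_{(n'')}\to0$ to obtain, uniformly in $1\leq j\leq n''$ and for all $n''$ large enough,
\[
\frac{\eps+q_{j,n''}}{\b_{j,n''}}\;\geq\;\frac{1}{\b_{j,n''}}\bigl(\eps-C_\d\b_{(n'')}^{1-\d}\bigr)\;\geq\;\frac{\eps}{2\b_{(n'')}},
\]
together with the symmetric bound $(-\eps+q_{j,n''})/\b_{j,n''}\leq -\eps/(2\b_{(n'')})$. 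Monotonicity of $F_0$ would then give
\[
\max_{1\leq j\leq n''}\bigl\{1-Q_{j,n''}(\eps)+Q_{j,n''}(-\eps)\bigr\}\;\leq\;\bigl[1-F_0\bigl(\eps/(2\b_{(n'')})\bigr)\bigr]+F_0\bigl(-\eps/(2\b_{(n'')})\bigr),
\]
and the right-hand side vanishes as $n''\to+\infty$ by \eqref{conMn}.

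The only point that requires care is ensuring that the centering shifts $q_{j,n}$ do not destroy the uniform asymptotic negligibility of the laws $Q_{j,n}$, since $\b_{j,n}$ can be arbitrarily small and \emph{a priori} the ratios $q_{j,n}/\b_{j,n}$ could blow up. The bound \eqref{CENTRATURA} is precisely engineered to circumvent this: it shows that $q_{j,n}/\b_{j,n}=O(\b_{j,n}^{-\d})$, which, since $\d<1$ can be chosen arbitrarily, is of strictly smaller order than $\eps/\b_{j,n}$, so the shift is negligible relative to the scale $\eps$ on which the UAN condition is tested.
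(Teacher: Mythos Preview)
Your proposal is correct and follows essentially the same route as the paper: the paper also invokes $|\sin x|\leq|x|^{1-\delta}$ to obtain \eqref{CENTRATURA}, appeals to \eqref{stabledomain} for $C_\delta<+\infty$, extracts $(n'')$ and $\Omega_0$ from Lemmata \ref{Lemma1} and \ref{Lemma2bis}, and then uses \eqref{CENTRATURA} to show $(\eps+q_{j,n''})/\b_{j,n''}\geq\b_{(n'')}^{-1}(\eps-C_\delta\b_{(n'')}^{1-\delta})$ before bounding the UAN quantity by $1-F_0(\bar c\,\b_{(n'')}^{-1})+F_0(-\bar c\,\b_{(n'')}^{-1})$. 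Your version simply spells out a few steps (the proof of $|\sin y|\leq|y|^{1-\delta}$, the integration-by-parts for $C_\delta<\infty$, the explicit constant $\eps/2$ in place of $\bar c$) that the paper leaves implicit.
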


\begin{proof}
  First of all note that $C_\delta <+\infty$ for every $\delta\in(0,1)$
  because of hypothesis (\ref{stabledomain}).
  Using further that $ |\sin(x)| \leq |x|^{1-\delta}$ for $\delta\in(0,1)$,
  one immediately gets
  \[
  | q_{j,n}| = \left |
    \int_\RE \sin(\beta_{j,n}s) dF_0(s)
  \right | 
  \leq  \beta_{j,n}^{1-\delta } \int_\RE |s|^{1-\delta}dF_0(s).
  \]

  To prove \eqref{UAN} note that, as a consequence of \eqref{CENTRATURA},
  \begin{align*}
    \frac{\epsilon+q_{j,n}}{\beta_{j,n}} 
  \geq \beta_{(n)}^{-1}( \epsilon - C_\delta \beta_{(n)}^{1-\delta} ) .
  \end{align*}
  Clearly, the expression inside the bracket is positive for sufficiently small $\beta_{(n)}$.
  Defining $(n'')$ and $\Omega_0$ in accordance to Lemma \ref{lemma3},
  it thus follows
  \[
  \max_{1\leq j \leq n''} \Big(1-Q_{j,n''}(\eps)+Q_{j,n''}(-\eps)\Big )
  \leq 1-F_0 (\bar{c}\beta_{(n'')}^{-1}) + F_0 (-\bar{c}\beta_{(n'')}^{-1})
  \]
  for a suitable constant $ \bar{c}$ depending only on $\eps$, $\delta$ and $F_0$.
  An application of (\ref{conMn}) yields \eqref{UAN}.
\end{proof}

\begin{lemma}\label{lemma4_4} 
  Suppose the assumptions of Theorem 
  \ref{thm2-bis} are in force,
  then for every divergent sequence $(n')$ of integer numbers
  there exists a divergent subsequence $(n'') \subset (n')$ 
  and a measurable set $\Omega_0$ with $P(\Omega_0)=1$ such that
  \begin{align}
    \label{eq.limitalpha1}
    \lim_{n'' \to +\infty} \E[ e^{ i \xi W^*_{n''}}|\CB](\omega)=\exp\{ -|\xi|
    k_1 M_{\infty}^{(1)}(\omega) (1+i2 \eta \log|\xi|\operatorname{sign}\xi)   \}
    \quad (\xi \in \RE )
  \end{align}
  for every $\omega$ in $\Omega_0$.
\end{lemma}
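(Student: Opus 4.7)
The strategy is modeled on the proof of Lemma \ref{lemma4_2}: apply the general central limit theorem for triangular arrays to the conditional law of $W_{n''}^*$ given $\CB$, pointwise on a set $\Omega_0$ of full probability. First I would invoke Lemma \ref{lemma3alfa=1} to extract a subsequence $(n'')\subset(n')$ and the set $\Omega_0$ on which $M_{n''}^{(1)}(\omega)\to M_\infty^{(1)}(\omega)$, $\b_{(n'')}(\omega)\to 0$ and the UAN condition \eqref{UAN} hold. For fixed $\omega\in\Omega_0$, the conditional law of $W_{n''}^*$ given $\CB$ is the convolution of the distribution functions $Q_{j,n''}$ of $\b_{j,n''}X_j-q_{j,n''}$, so convergence of the conditional characteristic function to the right-hand side of \eqref{eq.limitalpha1} reduces to verifying the tail, truncated-variance and centering conditions of the general CLT, now with the $\a=1$ stable target (cf.\ Theorem 30, Section 16.9 in \cite{fristedgray}).

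The tail and variance conditions I would dispatch essentially as in Lemma \ref{lemma4_2}. Since $Q_{j,n''}(x)=F_0((x+q_{j,n''})/\b_{j,n''})$ with $|q_{j,n''}|\leq C_\delta\b_{j,n''}^{1-\delta}$ by \eqref{CENTRATURA}, the shift inside $F_0$ is of order strictly smaller than $\b_{j,n''}$, so hypothesis \eqref{stabledomain} gives $\z_{n''}(x)\to c^+M_\infty^{(1)}/x$ for $x>0$ and $\z_{n''}(x)\to c^-M_\infty^{(1)}/|x|$ for $x<0$, in direct analogy with \eqref{eq.thirtyeightandonehalf}. An integration by parts analogous to \eqref{eq.sigma} specialised to $\a=1$ gives $\s_{n''}^2(\eps)\leq CM_\infty^{(1)}\eps$; the factor $\eps$ is then absorbed when the truncated variance enters the centering condition.

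The genuine obstacle is the centering condition, which is precisely what distinguishes the $\a=1$ case from Lemma \ref{lemma4_2}: the target \eqref{eq.limitalpha1} carries a logarithmic factor $\log|\xi|$, and this emerges only after a delicate cancellation between the natural CLT centering $\sum_j\int_{|x|\leq 1}x\,dQ_{j,n''}(x)$ and the built-in shift $\sum_j q_{j,n''}$ subtracted inside $W_{n''}^*$. The choice $q_{j,n}=\int\sin(\b_{j,n}s)\,dF_0(s)=\mathrm{Im}\,\phi_0(\b_{j,n})$ in \eqref{centr-case1} is made precisely so that, after subtraction, the divergent contribution of order $\log\b_{(n'')}^{-1}$ in the CLT centering cancels, leaving the finite quantity $(2\eta k/\pi)M_\infty^{(1)}\log|\xi|\operatorname{sign}\xi$. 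Concretely, I would write, for $\xi>0$,
\[
\log\E[e^{i\xi W_{n''}^*}\mid\CB]=\sum_{j=1}^{n''}\bigl(\log\phi_0(\b_{j,n''}\xi)-i\xi q_{j,n''}\bigr),
\]
expand $\log\phi_0$ via the standard integral representation, and use the asymptotics $1-F_0(x)\sim c^+x^{-1}$ and $F_0(-x)\sim c^-x^{-1}$ from \eqref{stabledomain} together with $M_{n''}^{(1)}\to M_\infty^{(1)}$ to identify both the cancellation and the surviving $\log|\xi|$ term. This careful tracking of the logarithmic scales is the main technical burden and is exactly what dictates the trigonometric form of the centering in \eqref{centr-case1}.
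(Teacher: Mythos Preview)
Your overall framework---extract the subsequence via Lemma~\ref{lemma3alfa=1}, then verify the tail, truncated-variance and centering conditions of the general CLT for the conditional law of $W_{n''}^*$ given~$\CB$---matches the paper. The tail conditions are handled essentially as you say, with the shift $q_{j,n''}$ absorbed because $|q_{j,n''}|\leq C_\delta\b_{(n'')}^{1-\delta}\to 0$.

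Where you diverge is on the centering, and this is the crux of the lemma. You propose to bypass the CLT centering condition altogether and compute $\sum_j\bigl(\log\phi_0(\b_{j,n''}\xi)-i\xi q_{j,n''}\bigr)$ directly, tracking the logarithmic cancellation by hand. The paper instead stays inside the Fristedt--Gray framework and verifies the specific centering condition
\[
\lim_{n''\to\infty}\sum_{j=1}^{n''}\int_\RE\chi(t)\,dQ_{j,n''}(t)=M_\infty^{(1)}(c^+-c^-)\int_0^\infty\frac{\chi(t)-\sin t}{t^2}\,dt,
\]
with $\chi(t)=-\J\{t\leq-1\}+t\J\{|t|<1\}+\J\{t\geq1\}$. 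The key device is to first prove $\sum_j\int_\RE\sin(x)\,dQ_{j,n''}(x)\to0$: expanding $\sin(\b_{j,n''}t-q_{j,n''})$ by the addition formula, the leading term is $\cos(q_{j,n''})\int\sin(\b_{j,n''}t)\,dF_0(t)=\cos(q_{j,n''})\,q_{j,n''}$, and the remaining pieces are $O(|q_{j,n''}|^3)$ or $O(|q_{j,n''}|\b_{j,n''}^{3/4})$, all summing to something controlled by $\b_{(n'')}^{1/2}M_{n''}^{(1)}\to0$. This is exactly where the choice $q_{j,n}=\int\sin(\b_{j,n}s)\,dF_0(s)$ does its work, and it does so \emph{inside} the CLT conditions rather than through a direct characteristic-function expansion. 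Once $\sum\int\sin\to0$, the residual $\sum_j\int(\chi-\sin)\,dQ_{j,n''}$ is handled by showing the measures $\nu_{n''}(B)=\sum_j\int_B t^2\,dQ_{j,n''}(t)$ converge on compacta to $M_\infty^{(1)}(c^+\J\{t>0\}+c^-\J\{t<0\})\,dt$. Your direct route is plausible but would require care with zeros of $\phi_0$ and would essentially redo the $1$-stable CLT from scratch; the paper's route keeps the logarithmic bookkeeping packaged inside Proposition~11 of \cite{fristedgray}.

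One further point: the truncated variance does not follow from \eqref{eq.sigma} as cleanly as you suggest, because $Q_{j,n''}$ now carries the shift $q_{j,n''}$. The paper integrates by parts and splits $s_{n''}^2(\eps)$ into four pieces $A_{n''}+B_{n''}+C_{n''}+D_{n''}$, where the boundary terms $A_{n''},C_{n''}$ are handled by the already-proved tail conditions, and the interior pieces $B_{n''},D_{n''}$ require separating the contribution from $|w|\leq 2|q_{j,n''}|$ (bounded by $4|q_{j,n''}|^2\leq 4C_{1/4}^2\b_{j,n''}^{3/2}$) before the $K\b_{j,n''}/(w-q_{j,n''})$ tail bound can be applied.
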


\begin{proof}
  Define $(n'')$ and $\Omega_0$ according to Lemma \ref{lemma3alfa=1},
  implying the convergencies \eqref{conMn}, 
  and the UAN condition \eqref{UAN}.
  In the following, let $\omega\in\Omega_0$ be fixed.
  In view of Proposition 11 in Section 17.3 of \cite{fristedgray}
  the claim \eqref{eq.limitalpha1} follows if 
  \eqref{condition1-uno}, \eqref{condition1-due} and \eqref{clt3condBis} 
  are satisfied with $\alpha=1$, 
  and in addition
  \begin{equation}\label{condition3_bis}
    \lim_{n\to +\infty} \sum_{j=1}^{n}\int_\RE \chi(t) dQ_{j,n''}(t)
    = M_{\infty}^{(1)}(c^+-c^-)\int_0^\infty \frac{\chi(t)- \sin(t)}{t^2}dt
  \end{equation}
  with $\chi(t)=-\J\{t \leq -1 \}+t \J\{ -1 <t <1 \}+\J\{t \geq 1 \}$. 

  Let us verify \eqref{condition1-uno} for an arbitrary $x>0$.
  Given $\epsilon>0$, there exists some $Y=Y(\epsilon)$ such that
  \begin{align}
    \label{eq.yinc}
    c^+ -\eps \leq y(1-F_0(y)) \leq c^+  +\eps
  \end{align}
  for all $y\geq Y$ because of hypothesis \eqref{stabledomain}.
  Moreover, in view in Lemma \ref{lemma3alfa=1},
  \begin{align*}
    \hat y_{j,n''}:=\frac{x+q_{j,n''}}{\b_{j,n''}} \geq \frac{x-C_{1/2}\b_{(n'')}^{1/2}}{\b_{(n'')}},
  \end{align*}
  which clearly diverges to $+\infty$ as $n''\to\infty$ because of \eqref{conMn};
  in particular, $\hat y_{j,n''}\geq Y$ for $n''$ large enough.
  It follows by \eqref{eq.yinc} that for those $n''$,
  \begin{align}
    \label{eq.ybound}
    \frac{c^+ -\eps}{x+q_{j,n''}} \b_{j,n''} \leq 1-F(\hat y_{j,n''}) \leq \frac{c^+ +\eps}{x+q_{j,n''}} \b_{j,n''} .
  \end{align}
  Recalling that $\zeta_{n''}(x)=\sum_{j=1}^{n''} [1-F(\hat y_{j,n''})]$,
  summation of \eqref{eq.ybound} over $j=1,\ldots,n''$ gives
  \begin{align*}
    \frac{c^+ -\eps}{x+q_{j,n''}} M^{(1)}_{n''} \leq \zeta_{n''}(x) \leq 
   \frac{c^+ +\eps}{x+q_{j,n''}} M^{(1)}_{n''} .
  \end{align*}
  Finally, observe that $|q_{j,n''}|\leq C_{1/2}\b_{(n'')}^{1/2}\to0$ as $n''\to\infty$,
  and that $M^{(1)}_{n''}\to M^{(1)}_\infty$ by \eqref{conMn}.
  Since $\eps>0$ has been arbitrary, the claim \eqref{condition1-uno} follows.
  The proof of \eqref{condition1-due} for arbitrary $x<0$ is completely analogous.

  Concerning \eqref{clt3condBis}, it is obviously enough to prove that
   \begin{align}
    \label{eq.sconv}
    \lim_{\eps \to 0}\limsup_{n'' \to +\infty}s^2_{n''}(\eps)=0
  \end{align}
  where 
  \(
  s^2_{n''}(\eps):= \sum_{j=1}^{n''} \int_{(-\eps,\eps]}x^2 dQ_{j,n''}(x)
  \).
  We split the domain of integration in the definition of $s^2_{n''}$ at $x=0$, 
  and integrate by parts to get
  \begin{align*}
    s^2_{n''}(\eps)
    = & -\eps^2 \sum_{j=1}^{n''} Q_{j,n''}(-\eps) - \sum_{j=1}^{n''} \int_{(-\eps,0]}Q_{j,n''}(u)2u du  \\
    & -\eps^2  \sum_{j=1}^{n''}(1- Q_{j,n''}(\eps)) +\sum_{j=1}^{n''}\int_{(0,\eps]}(1- Q_{j,n''}(u))2u du  \\
    =:& A_{n''}(\eps)+B_{n''}(\eps)+C_{n''}(\eps)+D_{n''}(\eps). \\
  \end{align*}
  Having already proven (\ref{condition1-uno}) and (\ref{condition1-due}),
  we conclude
  \begin{equation}
    \label{due_bis}
    \lim_{\eps \to 0^+}\lim_{n'' \to +\infty}\{ |A_{n''}(\eps)| + |C_{n''}(\eps)|\}=0.
  \end{equation}
  Fix $\eps>0$; assume that $n''$ is sufficiently large to have $|q_{j,n''}|< \eps/2$ for $j=1,\dots,n''$. 
  Then
  \begin{align*}
    |B_n(\epsilon)| 
    \leq & \sum_{j=1}^{n''} 2 \int_0^\eps w F_0\left(\frac{-w+q_{j,n''}}{\b_{j,n''}}\right)dw \\
    \leq & \sum_{j=1}^{n''} \left\{ \int_0^{2|q_{j,n''}|} 2w\,dw 
      + 2 \int_{2|q_{j,n''}|}^{\eps} wF_0\left(\frac{-w+q_{j,n''}}{\b_{j,n''}}\right)dw\right\} \\
    \leq & \sum_{j=1}^{n''} \Big\{ 4 |q_{j,n''}|^2 
    + 2\int_{2|q_{j,n''}|}^{\eps} w \Big(\frac{K \b_{j,n''}}{w-q_{j,n''}}\Big)dw \Big\} \\
    \leq &  \sum_{j=1}^{n''} \Big\{ 4 C_{1/4}^2 \b_{j,n''}^{3/2} + \b_{j,n''} \int_0^\eps 4K\,dw \Big\}
    \leq \big( 4 C_{1/4}^2 \b_{(n'')}^{1/2} + 4K\eps \big) M^{(1)}_{n''} ,
  \end{align*}
  with the constant $K$ defined in \eqref{eq.defK}.
  In view of \eqref{conMn}, it follows
  \begin{equation}\label{tre_bis}
    \lim_{\eps \to 0^+}\limsup_{n'' \to +\infty}|B_{n''}(\eps)|=0
  \end{equation}
  as desired.
  A completely analogous reasoning applies to $D_{n''}$.
  At this stage we can conclude \eqref{eq.sconv},
  and thus also \eqref{clt3condBis}.

  In order to verify  (\ref{condition3_bis}), 
  let us first show that
  \begin{equation}
    \label{eq:0001}
    \lim_{n''\to\infty}\sum_{j=1}^{n''}\int_\RE \sin(x) \,dQ_{j,n''}(x) = 0.
  \end{equation}
  We find
  \begin{align*}
    & \Big |\sum_{j=1}^{n''}   \int_\RE \sin(x)dQ_{j,n''}(x)\Big | 
    = \Big | \sum_{j=1}^{n''} \int_\RE \sin(t\b_{j,n''}-q_{j,n''})dF_0(t) \Big  | \\
    & \qquad \leq \sum_{j=1}^{n''} \Big| \cos(q_{j,n''})\int_\RE \sin(t\b_{j,n''})dF_0(t)
    - \sin(q_{j,n''})\int_\RE \cos(t\b_{j,n''})dF_0(t) \Big| \\
    & \qquad = \sum_{j=1}^{n''} \Big| (\cos(q_{j,n''})-1)q_{j,n''} + (q_{j,n''}-\sin(q_{j,n''}))
    + \sin(q_{j,n''})\int_\RE (1-\cos(t\b_{j,n''}))dF_0(t) \Big  | \\
    & \qquad \leq \sum_{j=1}^{n''} \big( | I_1 | + | I_2 | + | I_3 | \big).
  \end{align*}
  The elementary inequalities $|\cos(x)-1|\leq x^2/2$ and $|x-\sin(x)| \leq x^3/6$
  provide the estimate
  \[
  \sum_{j=1}^{n''} \big( |I_1| + |I_2| \big) 
  \leq \sum_{j=1}^{n''} |q_{j,n''}|^3 \leq C_{1/2}^3 \b_{(n'')}^{1/2} M^{(1)}_{n''}.
  \]
  By \eqref{conMn}, the last expression converges to zero as $n''\to\infty$.
  In order to estimate $I_3$, observe that, since $|1-\cos(x)| \leq 2 x^{3/4}$
  for all $x\in\RE$,
  \[
  \int_\RE (1- \cos(t\b_{j,n''}))dF_0(t)
  \leq 2 \b_{j,n''}^{3/4} \int_\RE |t|^{3/4} dF_0(dt) 
  = 2 C_{1/4}\b_{j,n''}^{3/4} .
  \]
Consequently, applying Lemma \ref{lemma3alfa=1} once again,
  \begin{align*}
    \sum_{j=1}^{n''} | I_3 | 
    \leq \sum_{j=1}^{n''} |q_{j,n''}| \Big| \int_\RE (1-\cos(t\b_{j,n''}))dF_0(t) \Big|
    \leq 2C_{1/4}^2 \b_{(n'')}^{1/2} M^{(1)}_{n''} ,
  \end{align*}
  which converges to zero on grounds of \eqref{conMn}.

  Having proven \eqref{eq:0001}, 
  the condition \eqref{condition3_bis} becomes equivalent to 
  \begin{align}
    \label{condition3_tris}
    \sum_{j=1}^{n''}\int_\RE (\chi(t)-\sin(t))dQ_{j,n''}(t) \to 
    M_{\infty}^{(1)}(c^+-c^-)\int_{\Re^+} \frac{\chi(t)- \sin(t)}{t^2}dt .
  \end{align}
  The proof of this fact follows essentially the line 
  of the proof of Theorem 12 of \cite{fristedgray}.
  Let us first prove that, if $-\infty<x<0<y<+\infty$,
  \begin{equation}\label{step1}
    \lim_{n''\to +\infty} \int_{(x,y]} d\nu_{n''}(t) = M_\infty^{(1)}(c^+y-c^-x), 
  \end{equation}
  where the sequence $(\nu_n)$ of measures on $\RE$ is defined by
  \[
  \nu_n[B] = \sum_{j=1}^{n} \int_B t^2 dQ_{j,n}(t) 
  \]
  for Borel sets $B\subset\RE$.
  For fixed $\eps\in(0,y)$, one uses \eqref{condition1-uno} to conclude
  \begin{align*}
    \lim_{n''\to\infty} \int_{(\eps,y]} d\nu_{n''}(t) 
    & = \lim_{n''\to\infty} \bigg( t^2\sum_{j=1}^{n''}\big(1-Q_{j,n''}(t)\big)\Big|^\eps_y + 2\int_{(\eps,y]} t \sum_{j=1}^{n''}\big(1-Q_{j,n''}(t)\big) dt \bigg) \\
    & = \eps^2\frac{c^+M_\infty^{(1)}}{\eps} - y^2\frac{c^+M_\infty^{(1)}}{y} + 2 \int_{(\eps,y]} t \frac{c^+M_\infty^{(1)}}{t}dt \\
    & = (y-\epsilon)c^+M_\infty^{(1)} .
  \end{align*}
  Notice that we have used the dominated convergence theorem to pass to the limit
  under the integral;
  this is justified in view of the upper bound provided by \eqref{eq.thirtyeightandonehalf}.
  In a similar way, one shows for fixed $\eps\in(0,|x|)$ that
  \begin{align*}
    \lim_{n''\to\infty} \int_{(x,-\eps]} d\nu_{n''}(t) = ( |x| - \eps)c^-M_\infty^{(1)} .
  \end{align*}
  Combining this with \eqref{eq.sconv}, one concludes
  \begin{align*}
    & \limsup_{n''\to\infty} \int_{(x,y]} d\nu_{n''}(t) \\
    & \quad \leq \limsup_{\eps\to0}\limsup_{n''\to\infty} \Big( \int_{(x,-\eps]} d\nu_{n''}(t) 
    + \int_{(-\eps,\eps]} d\nu_{n''}(t) + \int_{(\eps,y]} d\nu_{n''}(t) \Big) \\
    & \quad \leq \lim_{\eps\to0}(y-\eps)c^+M_\infty^{(1)} - \lim_{\eps\to0}(x + \eps)c^-M_\infty^{(1)} + 
    \lim_{\eps\to0}\limsup_{n''\to\infty} \sum_{j=1}^{n''}\int_{(-\eps,\eps]} t^2dQ_{j,n''}(t) \\
    & \quad = (c^+y-c^-x)M_\infty^{(1)} .
  \end{align*}
  On the other hand, trivially,
  \begin{align*}
    \liminf_{n''\to\infty} \int_{(x,y]} d\nu_{n''}(t)
    \geq \liminf_{\eps\to0}\liminf_{n''\to\infty} \Big( \int_{(x,-\eps]} d\nu_{n''}(t) + \int_{(\eps,y]} d\nu_{n''}(t) \Big)
    = (c^+y-c^-x)M_\infty^{(1)} .
  \end{align*}
  This proves \eqref{step1}.
  Now fix $0<R<+\infty$, and note that \eqref{step1} yields that 
  for every bounded and continuous function $f:[-R,R]\to\RE$
  \begin{align}
    \label{step3}
    \lim_{n''\to\infty} \int_{[-R,R]}  f(t) d\nu_{n''}(t) = 
    M_\infty^{(1)} c^- \int_{-R}^0 f(t)dt + M_\infty^{(1)} c^+ \int_0^R f(t)dt 
  \end{align}
  holds true. 
  In particular, using $f(t) = (\chi(t)-\sin t)/t^2$, for every $0<R<+\infty$, one gets 
 \begin{equation}\label{step4}
  \begin{split}
        \lim_{n''\to\infty} \sum_{j=1}^{n''} &\int_{[-R,R]}  (\chi(t)-\sin(t))
     dQ_{j,n''}(t)\\ &= 
 M_\infty^{(1)} (c^+-c^-) \int_{0}^R \frac{(\chi(t)-\sin(t))}{t^2}  dt.
  \\
  \end{split}
  \end{equation}
  Moreover, since $|\chi(t)-\sin t|\leq 2$, 
  \[
  |\sum_{j=1}^{n''} \int_{[-R,R]^c}  (\chi(t)-\sin(t))
     dQ_{j,n''}(t)| \leq  2 [\z_{n''}(-R)+\z_{n''}(R)].
  \]
 Applying  \eqref{condition1-uno}
  and \eqref{condition1-due} one obtains
  \[
  \limsup_{n'' \to +\infty}|\sum_{j=1}^{n''} \int_{[-R,R]^c}  (\chi(t)-\sin(t))
     dQ_{j,n''}(t)| \leq 2 M_\infty^{(\a)}(c^+ + c^-) \frac{1}{R},
  \]
  which gives
   \begin{align}
    \label{step5}
 \limsup_{R \to + \infty} \limsup_{n'' \to + \infty} | \sum_{j=1}^{n''} 
  \int_{[-R,R]^c}  (\chi(t)-\sin(t))    dQ_{j,n''}(t)|=0.
\end{align}  
Combining  \eqref{step4} with \eqref{step5} one gets \eqref{condition3_tris}.
\end{proof}

\begin{proof}[Proof of Theorem \ref{thm2-bis}]
  Use Lemma \ref{lemma4_4} and repeat the proof of Theorem \ref{thm2}.
\end{proof}

\begin{proof}[Proof of Theorem \ref{thm1}]
  The theorem is a corollary of Theorem \ref{thm2-bis}. 
  Since $m_0=\int_{\RE} x\, dF_0(x)<\infty$ by hypothesis, it follows that $c^+=c^-=0$,
  and so $V^*_t$ converges to $0$ in probability.
  Now write
  \[
  V_t=m_0M_{\n_t}^{(1)} + V^*_t-R_{\n_t} ,
  \]
  with the remainder
  \[
  R_n:=\sum_{j=1}^n(q_{j,n}-\b_{j,n}m_0).
  \]
  Thanks to Lemma \ref{Lemma2bis}, 
  $m_0M_{\n_t}^{(1)}$ converges in distribution to $m_0M_\infty^{(1)}$. 
  It remains to prove that $R_{\nu_t}$ converges to $0$ in probability.
  Since
  \[
  \Big|\frac{\sin(x)}{x}-1\Big| \leq H(x) := 1/6 \big[x^2\J\{|x|<1\}+\J\{|x|\geq1\} \big] \leq 1/6,
  \]
  it follows that
  \begin{align*}
    |R_n| 
    &\leq \sum_{j=1}^n \b_{j,n} \int_\RE \Big|\frac{\sin(\b_{j,n}x)}{\b_{j,n}x}-1\Big||x|dF_0(x) \\
    & \leq \sum_{j=1}^n \b_{j,n} \int_\RE H(\b_{j,n}x)|x|dF_0(x) 
    \leq M_n^{(1)} \int_\RE H(\b_{(n)}x)|x|dF_0(x).
  \end{align*}
  Recall that $M_{n}^{(1)}$ converges a.s. to $M_\infty^{(1)}$ 
  and $\b_{(n)}$ converges in probability to $0$ by \eqref{conMn}. 
  By dominated convergence it follows that also $\int_\RE H(\b_{(n)}x)|x|dF_0(x)$
  converges in probability to $0$. 

  The (non-)degeneracy of $V_\infty$ and the (in)finiteness of its moments
  is an immediate consequence of Lemma \ref{Lemma2bis}.
\end{proof}

\subsection{Estimates in Wasserstein metric (Proposition \ref{PropW-2})} 

\begin{proof}[Proof of Proposition \ref{PropW-2}]
  The proof uses the techniques employed in Lemma \ref{Lemma2bis}.

  We shall assume that $\W_\gamma(X_0,V_\infty)<+\infty$, since otherwise the claim is trivial.
  Then, there exists an optimal pair $(X^*,Y^*)$ realizing the infimum
  in the definition of the Wasserstein distance,
  \begin{align}
    \label{eq.defdel}
    \Delta:=\W_{\gamma}^{\max(\gamma,1)}(X_0,V_{\infty})=\W_\gamma^{\max(\gamma,1)}(X^*,Y^*)=\E|X^* -Y^* |^\gamma.
  \end{align}
  Let $(X_j^*,Y_j^*)_{j\geq1}$ be a sequence of independent and identically distributed random variables
 with the same law of $(X^*,Y^*)$,
  which are further independent of $B_n=(\b_{1,1},\b_{1,2},\dots,\b_{n,n})$.
  Consequently, $\sum_{j=1}^n X_j^* \b_{j,n}$ has the same law of $W_n$,
  and $\sum_{j=1}^n Y_j^* \b_{j,n}$ has the same law of $V_\infty$.
  By definition of $\W_\gamma$,  
  \begin{align*}
    \W_\gamma^{\max(\gamma,1)}  (W_n,V_\infty) 
    & \leq \E \Big[ \Big |\sum_{j=1}^{n} X_{j}^*  \b_{j,n} - \sum_{j=1}^{n} Y_{j}^* \b_{j,n} \Big|^\gamma \Big] \\
    &=\E \Big[ \E \bigg[ \Big | \sum_{j=1}^{n} (X_{j}^*-Y_j^*)  \b_{j,n} \Big|^\gamma \bigg|B_n \bigg] \Big]. \\
  \end{align*}
  For further estimates, we distinguish two cases.
  In the first case, $0<\a<\gamma\leq1$, we apply the elementary inequality
  \begin{align*}
    \bigg| \sum_{j=1}^n z_j \bigg|^\gamma \leq \sum_{j=1}^n |z_j|^\gamma
  \end{align*}
  for real numbers $z_1,\ldots,z_n$ to obtain
  \begin{align*}
    \W_\gamma(W_n,V_\infty) 
    \leq \E\Big[ \E\bigg[ \sum_{j=1}^n \b_{j,n}^\gamma |X_j^*-Y_j^*|^\gamma \bigg| B_n \bigg] \Big] 
    = \E\Big[ \sum_{j=1}^n \b_{j,n}^\gamma \Big] \Delta ;
  \end{align*}
  where  $\Delta$ is defined in \eqref{eq.defdel}.
  In the second case, $1\leq \a < \gamma\leq2$, we 
  can apply the Bahr-Esseen inequality (\ref{bahr-essen2})
  since $E(X^*_j-Y_j^*)=E(X_1)-E(V_\infty)=0$ and $E|X^*_j-Y_j^*|^\g  = W_\g^\g(X_0,V_\infty)<+\infty$.
  Thus,
  \begin{align*}
    \W_\gamma^\gamma(W_n,V_\infty)
    \leq \E\Big[ \bigg[ 2 \sum_{j=1}^{n} \b_{j,n}^{\gamma} | X_{j}^* -Y_{j}^* |^\gamma \bigg| F_n \Big] 
    = 2 \E\big[ \sum_{j=1}^{n} \b_{j,n}^{\gamma} \Big] \Delta .
  \end{align*}
  By convexity of the Wasserstein metric, 
  \[
  \W_\gamma^{\max(\gamma,1)}  (V_t,V_{\infty}) \leq \sum_{n \geq 1} e^{-t}(1-e^{-t})^{n-1} \W_\gamma^{\max(\gamma,1)}(W_n,V_\infty) .
  \]
  Combining the previous estimates with Proposition \ref{PropConst},
  we obtain
  \begin{align*}
    \W_\gamma^{\max(\gamma,1)}  (V_t,V_{\infty}) 
    & \leq a \Delta \sum_{n \geq 1} e^{-t}(1-e^{-t})^{n-1} \E \Big[ \sum_{j=1}^{n} \b_{j,n}^{\gamma}\Big] \\
    & = a \Delta e^{t\QQ (\gamma)},
  \end{align*}
  with $a=1$ if $0<\a <\gamma\leq 1$ and $a=2$ if $1\leq \a < \gamma \leq 2$.
\end{proof}

Lemma \ref{lemma-tail2} is a corollary of the following.
\begin{lemma}
  \label{lemma-tail1}
  Let two random variables $X_1$ and $X_2$ be given,
  and assume that their distribution functions $F_1$ and $F_2$
  both satisfy the conditions \eqref{eq.Ftail} and \eqref{eq.Ftail2}
  with the same constants $\alpha>0$, $0<\eps<1$, $K$ and $c^+,c^-\geq0$.
  Then $\W_\gamma(X_1,X_2)<\infty$ for all $\gamma$ that satisfy $\alpha<\gamma<\frac{\alpha}{1-\epsilon}$.
\end{lemma}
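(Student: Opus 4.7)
The plan is to bound the Wasserstein distance via the monotone (quantile) coupling. On a probability space carrying a random variable $U$ uniform on $[0,1]$, the pair $(F_1^{-1}(U), F_2^{-1}(U))$, with $F_i^{-1}(u) := \inf\{x : F_i(x) \geq u\}$, is a valid coupling of $X_1$ and $X_2$, whence
\begin{equation*}
  \W_\gamma^{\max(\gamma,1)}(X_1,X_2) \leq \int_0^1 \bigl|F_1^{-1}(u) - F_2^{-1}(u)\bigr|^\gamma \, du.
\end{equation*}
It thus suffices to show that this Lebesgue integral is finite. On any compact sub-interval of $(0,1)$ both $F_1^{-1}$ and $F_2^{-1}$ are bounded, so the integrand is bounded there; the only possible issue is the blow-up as $u \to 0^+$ or $u \to 1^-$.

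I would focus on $u \to 1^-$, the other endpoint being completely symmetric via \eqref{eq.Ftail2}. Consider first the generic case $c^+ > 0$. Hypothesis \eqref{eq.Ftail} may be rewritten as $1-u = c^+ F_i^{-1}(u)^{-\alpha} + r_i(F_i^{-1}(u))$ with $|r_i(x)| \leq K x^{-(\alpha+\epsilon)}$ for $x>0$. A bootstrap argument (first deducing $F_i^{-1}(u) \asymp (1-u)^{-1/\alpha}$ from monotonicity, then feeding this into the error term) yields
\begin{equation*}
  F_i^{-1}(u) = \left(\frac{c^+}{1-u}\right)^{1/\alpha}\!\left(1 + O\bigl((1-u)^{\epsilon/\alpha}\bigr)\right) \quad\text{as } u \to 1^-.
\end{equation*}
The point is that the leading singular factor $(c^+/(1-u))^{1/\alpha}$ does not depend on $i\in\{1,2\}$, so it cancels exactly in the difference, leaving
\begin{equation*}
  \bigl|F_1^{-1}(u) - F_2^{-1}(u)\bigr| = O\bigl((1-u)^{(\epsilon-1)/\alpha}\bigr).
\end{equation*}

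Raising to the $\gamma$-th power, the integrand near $u=1$ is $O((1-u)^{\gamma(\epsilon-1)/\alpha})$, which is Lebesgue-integrable exactly when $\gamma(1-\epsilon)/\alpha < 1$, i.e., when $\gamma < \alpha/(1-\epsilon)$. The analogous analysis at $u \to 0^+$ using \eqref{eq.Ftail2} yields the same constraint, and in the degenerate case $c^+ = 0$ (or $c^-=0$) the individual bound $F_i^{-1}(u) = O((1-u)^{-1/(\alpha+\epsilon)})$ already suffices to handle the corresponding endpoint under the stated range of $\gamma$. The main technical point, and the only step where cheap bounds would fail, is the cancellation of the leading-order singularities: without exploiting that $F_1$ and $F_2$ share both the exponent $\alpha$ and the same constant $c^+$, one would only obtain $|F_1^{-1}(u)-F_2^{-1}(u)|\leq F_1^{-1}(u)+F_2^{-1}(u)=O((1-u)^{-1/\alpha})$, which is insufficient for any $\gamma\geq\alpha$. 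The bootstrap inversion that produces the asymptotic expansion of $F_i^{-1}$ has to be done carefully because $F_i$ need not be continuous or strictly monotone, but this is routine once one argues through the generalized inverse and monotonicity.
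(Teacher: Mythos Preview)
Your main argument is correct and essentially the same as the paper's: both use the quantile representation $\W_\gamma^{\max(\gamma,1)}(X_1,X_2)=\int_0^1|F_1^{-1}(u)-F_2^{-1}(u)|^\gamma\,du$ and exploit that the leading singularity $(c^+/(1-u))^{1/\alpha}$ of $F_i^{-1}(u)$ is common to $i=1,2$ and hence cancels in the difference, leaving a remainder of order $(1-u)^{(\eps-1)/\alpha}$. The paper organizes the inversion slightly differently --- it introduces the explicit reference functions $H(x)=1-c^+x^{-\alpha}$ (for $x>0$) and $H_\pm=H\pm K|x|^{-(\alpha+\eps)}$, changes variables $u=H(x)$, and sandwiches $F_i^{-1}(H(x))$ between the solutions $\check x,\hat x$ of $H_+(\check x)=H(x)=H_-(\hat x)$ to obtain $|F_i^{-1}(H(x))-x|\leq Cx^{1-\eps}$ --- but this is exactly your bootstrap carried out in the $x$-variable rather than in $u$, and the resulting integrability condition $\gamma(1-\eps)<\alpha$ is the same.

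One caveat: your dismissal of the degenerate case $c^+=0$ overclaims. The individual bound $F_i^{-1}(u)=O((1-u)^{-1/(\alpha+\eps)})$ gives integrability of $|F_1^{-1}-F_2^{-1}|^\gamma$ only for $\gamma<\alpha+\eps$, which is strictly smaller than $\alpha/(1-\eps)$ whenever $\alpha+\eps>1$. In that regime the lemma as stated can actually fail for $c^+=0$: take $F_1$ Pareto with tail index $\alpha+\eps$ and $F_2$ a point mass, so that $\W_\gamma(X_1,X_2)^{\max(\gamma,1)}=\E|X_1|^\gamma=+\infty$ for $\gamma\geq\alpha+\eps$. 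The paper's proof does not address this case either --- it tacitly assumes $c^+>0$ by setting $\kappa=K/c^+$ and using strict monotonicity of $H$ on $\RE_+$ --- so this is a gap shared with the original argument rather than a defect specific to your proposal.
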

\begin{proof}
  Define the auxiliary functions $H$, $H_+$ and $H_-$ on $\RE\setminus\{0\}$ by
  \begin{align*}
    H(x) = \J\{x>0\}(1-c^+x^{-\a}) + \J\{x<0\}c^-|x|^{-\a} , \quad
    H_±(x) = H(x) ± K |x|^{-(\a+\epsilon)},
  \end{align*}
  so that $H_-\leq F_i\leq H_+$ for $i=1,2$ by hypothesis.
  It is immediately seen that $H(x)$, $H_+(x)$ and $H_-(x)$ all tend to one  (to zero, 
  respectively)  when $x$ goes to $+\infty$ ($-\infty$, respectively).
  Moreover, evaluating the functions' derivatives,
  one verifies that $H$ and $H_-$ are strictly increasing on $\RE_+$,
  and that $H_+$ is strictly increasing on some interval $(R_+,+\infty)$.
  Let $\check R>0$ be such that $H(\check R)>H_+(R_+)$;
  then, for every $x>\check R$, the equation
  \begin{align}
    \label{eq.hhh}
    H_-(\hat x) = H(x) = H_+( \check x)
  \end{align}
  possesses precisely one solution pair $(\check x,\hat x)$ satisfying $R_+<\check x<x<\hat x$.
  Likewise, $H$ and $H_+$ are strictly increasing on $\RE_-$,
  and $H_-$ is strictly increasing on $(-\infty,-R_-)$.
  Choosing $\hat R>0$ such that $H(-\hat R)<H_-(-R_-)$,
  equation \eqref{eq.hhh} has exactly one solution $(\check x,\hat x)$ 
  with $\check x<x<\hat x<-R_-$ for every $x< -\hat R$.
  
  A well-known representation of the Wasserstein distance of measures on $\RE$ reads
  \begin{align*}
    \W_\gamma^{\max(\gamma,1)}(X_1,X_2) = \int_0^1 | F_1^{-1}(y) - F_2^{-1}(y) |^\gamma \,dy ,
  \end{align*}
  where $F_i^{-1}:(0,1)\to\RE$ denotes the pseudo-inverse function of $F_i$.
  We split the domain of integration $(0,1)$ into the 
  three intervals $(0,H(-\hat R))$, $[H(-\hat R),H(\check R)]$ and $(H(\check R),1)$,
  obtaining:
  \begin{align*}
    \W_\gamma(X_1,X_2)^{\max(\gamma,1)}
    &= \int_{-\infty}^{-\hat R} | F_1^{-1}(H(x))-F_2^{-1}(H(x))|^\gamma H'(x)\,dx \\
    & + \int_{H(-\hat R)}^{H(\check R)} | F_1^{-1}(y) - F_2^{-1}(y) |^\gamma \,dy \\
    & + \int_{\check R}^\infty | F_1^{-1}(H(x))-F_2^{-1}(H(x))|^\gamma H'(x)\,dx .
  \end{align*}
  The middle integral is obviously finite.
  To prove finiteness of the first and the last integral,
  we show that
  \begin{align*}
    \int_{\check R}^\infty | F_1^{-1}(H(x)) - x |^\gamma H'(x)\,dx < \infty ;
  \end{align*}
  the estimates for the remaining contributions are similar.
  Let some $x\geq \check R$ be given, 
  and let $\hat x>\check x>\check R$ satisfy \eqref{eq.hhh}.
  From $H_-<F_1<H_+$, it follows that
  \begin{align*}
    F_1(\check x) < H(x) < F_1(\hat x),
  \end{align*}
  which implies further that
  \begin{align}
    \label{eq.xhx}
    \check x - x < F_1^{-1}(H(x)) - x < \hat x - x .
  \end{align}
  From the definition of $H$, it follows that
  \begin{align*}
    x = \hat x (1+\kappa\hat x^{-\eps})^{-1/\alpha} ,
  \end{align*}
  with $\kappa=K/c^+$.
  Combining this with a Taylor expansion, and recalling that $\hat x>x>\check R>0$, 
  one obtains
  \begin{align}
    \label{eq.xhat}
    \hat x - x = x \big[(1+\kappa\hat x^{-\eps})^{1/ \a} - 1 \big] 
    < x \big[ (1+\kappa x^{-\eps})^{1/ \a}-1\big] < \hat C x^{1-\eps} ,
  \end{align}
  where $\hat C$ is defined in terms of $\alpha$, $\kappa$ and $\check R$.
  In an analogous manner,
  one concludes from
  \begin{align*}
    x = \check x (1-\kappa\check x^{-\eps})^{-1/\alpha} ,
  \end{align*}
  in combination with $0<R_+<\check x<x$ and $0<\eps<1$ that
  \begin{align}
    \label{eq.xcheck}
    \check x - x = \check x\big[1-(1-\kappa\check x^{-\eps})^{-1/ \a}\big]
    \geq \check x\big[1-(1+\check C \check x^{-\eps})\big] 
    = - \check C \check x^{1-\eps} > - \check C x^{1-\eps},
  \end{align}
  where $\check C$ only depends on $\a$, $\kappa$ and $R_+$.
  Substitution of \eqref{eq.xhat} and \eqref{eq.xcheck} into \eqref{eq.xhx} yields
  \begin{align*}
    \int_{\check R}^\infty | F_1^{-1}(H(x)) - x |^\gamma H'(x)\,dx 
    < \max(\hat{C},\check{C})^\gamma \int_{\check R}^\infty x^{\gamma(1-\eps)-\alpha-1}\,dx ,
  \end{align*}
  which is finite provided that $0<\gamma<\alpha/(1-\eps)$.
\end{proof}
\begin{proof}[Proof of Lemma \ref{lemma-tail2}]
  In view of Lemma \ref{lemma-tail1}, 
  it suffices to show that the distribution function $F_\infty$ of $V_\infty$ 
  satisfies \eqref{eq.Ftail} and \eqref{eq.Ftail2}
  with the same constants $c^+$ and $c^-$ as the initial condition $F_0$
  (possibly after diminishing $\eps$ and enlarging $K$).

  The proof is based on the representation of $F_\infty$ as a mixture of stable laws.
  More precisely, let $G_\a$ be the distribution function 
  whose characteristic function is $\hat g_\a$ as in \eqref{chaSta},
  then
  \begin{align*}
    F_\infty (x) = \E\Big[ G_\a \Big( \big(M_\infty^{(\alpha)}\big)^{-1/\alpha} x \Big) \Big] ,
  \end{align*}
  see \eqref{characteristic}.
  Since $\a<\gamma<2\a$, then 
  there exists a finite constant $K>0$ such that
  \begin{align*}
    | 1 - c_+x^{-\alpha} - G_\a(x) | \leq K x^{-\gamma}
  \end{align*}
  for $x>0$, and similarly for $x<0$; 
  see, e.g. Sections 2.4 and 2.5 of \cite{Zolotarev1986}).
  Using that $\E[M_\infty^{(\alpha)}]=1$ and $C:=\E[(M_\infty^{(\alpha)})^{\gamma/\a}]<\infty$ 
  (since $\QQ(\gamma)<0$) it follows further that
  \begin{align*}
    \big| 1 - c^+ x^{-\alpha} - F_\infty(x) \big|
    & = \big| 1 - c^+ \E\big[M_\infty^{(\alpha)}\big]x^{-\alpha} - \E \big[G_\a ((M_\infty^{(\alpha)})^{-1/\alpha}x) 
         \big] \big| \\
    & \leq \E \big[ \big| 1 - c^+ \big( (M_\infty^{(\alpha)})^{-1/\alpha} 
    x\big)^{-\alpha} -  G_\a ((M_\infty^{(\alpha)})^{-1/\alpha}x)\big| \big] \\
    & \leq \E \big[ K (M_\infty^{(\alpha)})^{\gamma/\a} x^{-\gamma} \big] = C K x^{-\gamma} .
  \end{align*}
  This proves \eqref{eq.Ftail} for $F_\infty$, with $\eps=\gamma-\a$ and $K'=CK$.
  A similar argument proves \eqref{eq.Ftail2}.
\end{proof}

\subsection{Proofs of strong convergence (Theorem \ref{thm.strongnew})}

We shall use the Wild sum representation of the solution to the Boltzmann equation,
see \eqref{eq.wildsum} and \eqref{eq.wildrec}.
The idea is to prove that certain $\xi$-pointwise a priori bounds 
on the characteristic functions $\qn$ are preserved by the collisional operator,
and hence are propagated from the initial condition to any later time.

A first intermediate result is
\begin{lemma}
  \label{lem.head} Under the hypotheses of Theorem \ref{thm.strongnew}, 
there exists a constant $\theta>0$ and a radius $\rho>0$, both independent of $n\geq0$, 
  such that $|\qn(\xi)|\leq(1+\theta|\xi|^\alpha)^{-1/r}$ for all $|\xi|\leq\rho$.
\end{lemma}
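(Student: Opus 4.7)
The plan is to establish $|\hat q_n(\xi)| \leq F(\xi) := (1+\theta|\xi|^\alpha)^{-1/r}$ by induction on $n$, exploiting the Wild recursion \eqref{eq.wildrec}; the constants $\theta > 0$ and $\rho > 0$ will be chosen so that both the base case and the inductive propagation hold uniformly in $n$.

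For the base case $n = 0$, one needs $|\phi_0(\xi)| \leq F(\xi)$. By the domain-of-normal-attraction hypothesis for $F_0$ invoked from Theorem \ref{thm2} or \ref{thm3-bis} (together with $c^-+c^+>0$ when $\alpha<2$), the characteristic function $\phi_0$ admits the asymptotic $|\phi_0(\xi)|^2 = 1 - c|\xi|^\alpha + o(|\xi|^\alpha)$ as $\xi \to 0$ with some $c > 0$. Since $F(\xi) = 1 - (\theta/r)|\xi|^\alpha + O(|\xi|^{2\alpha})$, the bound $|\phi_0| \leq F$ holds on a small interval $[-\rho_0, \rho_0]$ as soon as $\theta$ is chosen small enough (say $\theta < cr/2$).

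For the inductive step, if $|\hat q_j(\xi)| \leq F(\xi)$ for every $j < n$ and $|\xi| \leq \rho$, the triangle inequality applied to the Wild recursion gives
$$|\hat q_n(\xi)| \leq \frac{1}{n}\sum_{j=0}^{n-1} \E\bigl[|\hat q_j(L\xi)|\,|\hat q_{n-1-j}(R\xi)|\bigr] \leq \E[F(L\xi)\,F(R\xi)].$$
Consequently, the induction closes as soon as the functional inequality
$$\E[F(L\xi)\,F(R\xi)] \leq F(\xi) \qquad (|\xi| \leq \rho) \qquad (\star)$$
is established.

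Verifying $(\star)$ is the technical heart of the argument and is where the main obstacle lies. Writing
$$F(L\xi)\,F(R\xi) = \bigl(1 + \theta|\xi|^\alpha(L^\alpha + R^\alpha) + \theta^2|\xi|^{2\alpha}(LR)^\alpha\bigr)^{-1/r},$$
a second-order Taylor expansion in $\theta|\xi|^\alpha$, combined with the normalization $\E[L^\alpha + R^\alpha] = 1$ from \eqref{eq.defalpha}, shows that the zeroth- and first-order contributions in the expectation match those of $F(\xi)$ exactly. At second order, however, Jensen's inequality applied to the convex map $x \mapsto (1+x)^{-1/r}$ points in the wrong direction, so the needed margin must be recovered from the positive cross-term $\theta^2|\xi|^{2\alpha}(LR)^\alpha$. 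Here hypothesis (H1) enters decisively: the almost-sure bound $L^r + R^r \geq 1$ with $r<\alpha$ rules out the simultaneous vanishing of $L$ and $R$, and by the recursion \eqref{recursion} it propagates to the almost-sure estimate $M_n^{(r)} = \sum_j \beta_{j,n}^r \geq 1$ for every $n$. This rigidity supplies the quantitative handle needed to absorb the Jensen loss and to make the cross-term contribution dominate when the expectation is taken. After choosing $\theta$ small enough to balance the second-order correction and then $\rho$ small enough to dominate the higher-order remainder, $(\star)$ holds uniformly in $n$, and the induction closes. Hypothesis (H2) plays no role for Lemma \ref{lem.head} itself; it is used later in the proof of Theorem \ref{thm.strongnew} to upgrade the pointwise characteristic-function bound obtained here to strong $L^1$- and $L^2$-convergence of the densities.
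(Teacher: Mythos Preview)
Your inductive scheme has a genuine gap at the key functional inequality $(\star)$. The function $F(\xi)=(1+\theta|\xi|^\alpha)^{-1/r}$ is \emph{not} a supersolution of the smoothing operator in general: expanding both sides of $(\star)$ to second order in $u=\theta|\xi|^\alpha$, the first-order terms do match thanks to $\E[L^\alpha+R^\alpha]=1$, but the second-order comparison reduces to
\[
\E[(LR)^\alpha]\;\ge\;\tfrac12\bigl(\tfrac1r+1\bigr)\,\mathrm{Var}(L^\alpha+R^\alpha),
\]
which can fail under all the hypotheses of Theorem~\ref{thm.strongnew}. For a concrete instance, take $\alpha=2$ and $(L,R)=(\sqrt2,0)$ with probability $1/4$, $(L,R)=(1/\sqrt3,1/\sqrt3)$ with probability $3/4$: then $\E[L^2+R^2]=1$, $\QQ(3)<0$, hypothesis~(H1) holds for any sufficiently small $r$, yet $\mathrm{Var}(L^2+R^2)=1/3$ while $\E[(LR)^2]=1/12$, so the displayed inequality would force $1/r+1\le1/2$. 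Shrinking $\theta$ does not help, since both sides scale identically in $\theta$; nor does (H1) supply the needed margin---your invocation of $M_n^{(r)}\ge1$ concerns the $\beta_{j,n}$, not the single pair $(L,R)$ appearing in $(\star)$. There is also a structural issue with the induction itself: you state the bound only for $|\xi|\le\rho$, but since $L$ or $R$ may exceed $1$ (as in the example), $L\xi$ can leave $[-\rho,\rho]$ and the inductive hypothesis is unavailable for $\hat q_j(L\xi)$.

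The paper sidesteps both obstacles by a different choice of inductive envelope. It proves, for \emph{all} $\xi\in\RE$, the bound $|\hat q_\ell(\xi)|\le\Phi(\xi)+\kappa|\xi|^\alpha$, where $\Phi(\xi)=\E[\exp(-k|\xi|^\alpha M_\infty^{(\alpha)})]$ and $\kappa<k$. The crucial point is that $\Phi$ is an \emph{exact} fixed point, $\Phi(\xi)=\E[\Phi(L\xi)\Phi(R\xi)]$, so there is no Jensen loss at all; the linear correction $\kappa|\xi|^\alpha$ then propagates verbatim because $\E[L^\alpha+R^\alpha]=1$. The bound is global (the base case holds trivially for large $|\xi|$ since $\Phi(\xi)+\kappa|\xi|^\alpha\ge1$ there), so the induction closes without restriction on $L,R$. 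Only at the very end is $\Phi(\xi)+\kappa|\xi|^\alpha$ compared to $(1+\theta|\xi|^\alpha)^{-1/r}$ on a small ball $|\xi|\le\rho$. In particular, (H1) plays no role in this lemma; it enters only in the companion tail estimate, Lemma~\ref{lem.tail}.
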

\begin{proof}
  By the explicit representation 
  \eqref{characteristic} or \eqref{characteristic-gauss}, 
  respectively,
  we conclude that 
  \begin{align*}
    |\phi_\infty(\xi)| \leq \Phi(\xi) := \E[\exp(-|\xi|^\alpha k M^{(\alpha)}_\infty)] ,
  \end{align*}
  with the parameter $k$ from \eqref{constant}, or $k=\sigma^2/2$ if $\alpha=2$.
  Notice further that, by (\ref{ligget3bis}), $\Phi$ 
  satisfies
  \begin{align}
    \label{eq.Phistationary}
    \Phi(\xi) = \E[\Phi(L\xi)\Phi(R\xi)].
  \end{align}
  Moreover, since $M_\infty^{(\a)} \not = 0$, $\E[M_\infty^{(\a)}]=1$ and $\E[(M_\infty^{(\a)})^{\gamma/\a}]<+\infty$, 
  the function $\Phi$ is positive and strictly convex in $|\xi|^\alpha$,
  with $\Phi(\xi)=1-k|\xi|^\alpha+o( |\xi|^\alpha)$.
  It follows that for each $\kappa>0$ with $\kappa<k$,
  there exists exactly one point $\Xi_\kappa >0$ with $\Phi(\Xi_\kappa )+\kappa|\Xi_\kappa |^\alpha=1$,
  and $\Xi_\kappa$ decreases monotonically from $+\infty$ to zero as $\kappa$ increases from zero to $k$.

  Since $\hat q_0=\phi_0$ is the characteristic function of the initial datum,
  satisfying the condition \eqref{stabledomain},
  it follows by Theorem 2.6.5 of \cite{IbragimovLinnik1971} that
  \begin{align*}
    \hat q_0(\xi) = 1 - k|\xi|^\alpha(1-i\eta\tan(\pi\alpha/2)\operatorname{sign}\xi) + o( |\xi|^\alpha ),
  \end{align*}
  with the same $k$ as before, and $\eta$ determined by \eqref{constant}. 
  For $\a=2$, clearly $\hat q_0(\xi)= 1- \s^2\xi^2/2 + o( \xi^2 )$.
  By the aforementioned properties of $\Phi$, 
  there exists a $\kappa\in(0,k)$ such that
  \begin{align}
    \label{eq.headstart}
    |\hat q_0(\xi)| \leq \Phi(\xi) + \kappa |\xi|^\alpha 
  \end{align}
  for all $\xi\in\RE$.
  This is evident, since for small $\xi$,
  \begin{align*}
    |\hat q_0(\xi)|=|\phi_0(\xi)| = 1 - k|\xi|^\alpha + o( |\xi|^\alpha ),
  \end{align*}
  while inequality \eqref{eq.headstart} is trivially satisfied for $|\xi|\geq\Xi_k$,
  since $|\phi_0|\leq1$.

  Starting from \eqref{eq.headstart},
  we shall now prove inductively that
  \begin{align}
    \label{eq.headinduce}
    |\hat q_\ell(\xi)| \leq \Phi(\xi) + \kappa |\xi|^\alpha .
  \end{align}
  Fix $n\geq0$, and assume \eqref{eq.headinduce} holds for all $\ell\leq n$. 
  Choose $j\leq n$.
  Using the invariance property \eqref{eq.Phistationary} of $\Phi$,
  as well as the uniform bound of characteristic functions by one,
  it easily follows that
  \begin{align*}
    |\widehat{Q}^+[\qj,\qnj](\xi)| - \Phi(\xi)
    & \leq  \E\big[ |\qj(L\xi)| |\qnj(R\xi)| - \Phi(L\xi)\Phi(R\xi) \big] \\
    & \leq \E\big[ \big( |\qj(L\xi)| - \Phi(L\xi) \big) |\qnj(R\xi)| \big] 
    + \E\big[ \Phi(L\xi) \big( |\qnj(R\xi)| - \Phi(R\xi) \big) \big] \\
    & \leq \E\big[\kappa (L|\xi|)^\alpha\big] + \E\big[\kappa(R|\xi|)^\alpha\big] = \kappa |\xi|^\alpha .
  \end{align*}
  The final equality is a consequence of $\E[L^\alpha+R^\alpha]=1$.
  By \eqref{eq.wildrec}, 
  it is immediate to conclude \eqref{eq.headinduce} with $\ell=n+1$.

  The proof is finished by noting that, since $\kappa<k$,
  \begin{align*}
    \big(1+\theta |\xi|^\alpha\big)^{-1/r} \geq \Phi(\xi) + \kappa |\xi|^\alpha 
  \end{align*}
  holds for $|\xi|\leq\rho$, 
  provided that $\rho>0$ and $\theta>0$ are sufficiently small. 
\end{proof}
\begin{lemma}
  \label{lem.tail} Under the hypotheses of Theorem \ref{thm.strongnew}, 
  let $\rho>0$ be the radius introduced in Lemma \ref{lem.head} above.
  Then, there exists a constant $\lambda>0$, independent of $\ell\geq0$, 
  \begin{align}
    \label{eq.tailinduce}
    |\hat q_\ell(\xi)| \leq (1+\lambda|\xi|^r)^{-1/r} \quad \mbox{for all $|\xi|\geq\rho$}.
  \end{align}
\end{lemma}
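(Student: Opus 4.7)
The plan is to prove the stated bound by induction on $\ell\ge0$, establishing simultaneously a pointwise majorant $\Psi\colon\RE\to(0,1]$ that dominates every $|\hat q_\ell|$ and coincides with $(1+\lambda|\xi|^r)^{-1/r}$ on $|\xi|\ge\rho$. Recalling from Remark \ref{rem3} that $r<\alpha$, a natural candidate is the piecewise function
\[
\Psi(\zeta):=
\begin{cases}
(1+\theta|\zeta|^\alpha)^{-1/r}, & |\zeta|\le\rho,\\
(1+\lambda|\zeta|^r)^{-1/r}, & |\zeta|>\rho,
\end{cases}
\]
which is continuous at $|\zeta|=\rho$ via the matching relation $\lambda:=\theta\rho^{\alpha-r}$. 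Proving $|\hat q_\ell|\le\Psi$ for every $\ell\ge0$ immediately yields \eqref{eq.tailinduce}.

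For the base case $\ell=0$, I would exploit hypothesis (H2): writing $f_0=h^2$ with $h\in H^1(\RE)$ and integrating by parts in $\phi_0(\xi)=\int e^{i\xi x}h(x)^2\,dx$ yields the sharp decay $|\phi_0(\xi)|\le (2/|\xi|)\|h\|_2\|h'\|_2$. Combined with $|\phi_0|\le1$ and with the interior super-solution estimate \eqref{eq.headstart} from the proof of Lemma \ref{lem.head}, this provides $|\phi_0(\zeta)|\le\Psi(\zeta)$ for all $\zeta$, possibly after shrinking the constants $\theta$ and $\lambda$.

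The inductive step is driven by hypothesis (H1) through the algebraic identity
\[
(1+\lambda L^r|\xi|^r)(1+\lambda R^r|\xi|^r)=1+\lambda(L^r+R^r)|\xi|^r+\lambda^2 L^rR^r|\xi|^{2r}\ge 1+\lambda|\xi|^r\qquad\text{a.s.},
\]
which yields the almost-sure pointwise bound $\Psi(L\xi)\Psi(R\xi)\le\Psi(\xi)$ on the event where both $|L\xi|$ and $|R\xi|$ exceed $\rho$. Applied to the Wild recursion \eqref{eq.wildrec} this is exactly the mechanism that transports the $r$-type tail from $\hat q_j$ and $\hat q_{n-1-j}$ to $\hat q_n$.

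The main obstacle is the remaining events where at least one of $|L\xi|,|R\xi|$ falls below $\rho$, forcing the corresponding factor of $\Psi$ to be evaluated in its $\alpha$-exponent form. A direct pointwise comparison of the product $\Psi(L\xi)\Psi(R\xi)$ with $\Psi(\xi)$ can fail on these mixed events (as one sees on explicit $L,R$ satisfying $L^r+R^r=1$ with one of them very small), so the invariance has to be closed in expectation, balancing the $\alpha$- and $r$-contributions via the super-additivity $L^r+R^r\ge 1$ together with the positive cross term $\lambda^2L^rR^r|\xi|^{2r}$ in the identity above. The doubly small event $\{|L\xi|<\rho,\,|R\xi|<\rho\}$ is confined to the bounded transition band $\rho\le|\xi|\le 2^{1/r}\rho$, since (H1) forces $\max(L,R)\ge 2^{-1/r}$; on this compact range $\Psi(\xi)$ is bounded away from zero and the bound can be absorbed by a further fine-tuning of the constants $\theta,\lambda$ (and, if necessary, a slight enlargement of $\rho$).
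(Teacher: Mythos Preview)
Your skeleton is right --- induction on $\ell$, the base case via (H2), and the algebraic identity
\[
(1+\lambda L^r|\xi|^r)(1+\lambda R^r|\xi|^r)\ge 1+\lambda(L^r+R^r)|\xi|^r\ge 1+\lambda|\xi|^r
\]
on the event where both $|L\xi|,|R\xi|$ fall in the $r$-regime. But the handling of the mixed events is a genuine gap, not a matter of fine-tuning.

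By forcing the matching $\lambda=\theta\rho^{\alpha-r}$ you eliminate the degree of freedom that the argument actually needs. On a mixed event, say $|L\xi|\ge\rho$ and $|R\xi|<\rho$, the product $\Psi(L\xi)\Psi(R\xi)\le\Psi(\xi)$ is equivalent to
\[
\theta R^\alpha|\xi|^\alpha+\lambda\theta L^rR^\alpha|\xi|^{r+\alpha}\ge\lambda(1-L^r)|\xi|^r .
\]
Using $1-L^r\le R^r$ from (H1) and dropping the cross term, this would follow from $\theta(R|\xi|)^{\alpha-r}\ge\lambda$, i.e.\ $R|\xi|\ge(\lambda/\theta)^{1/(\alpha-r)}$. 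With your matching, this threshold equals $\rho$, which is exactly what is violated on the mixed event; and the cross term cannot save you, since it carries the same factor $R^\alpha$ and vanishes as $R\to0$. So the pointwise sub-invariance of $\Psi$ fails, and the bare inequality $L^r+R^r\ge1$ gives no slack to average away the deficit --- it may hold with equality a.s.

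The paper's proof supplies the missing mechanism. One takes $\lambda$ strictly smaller than $\theta\rho^{\alpha-r}$, so that the $\alpha$-bound from Lemma~\ref{lem.head} already implies the $r$-bound on a buffer zone $[\rho_\lambda,\rho)$ with $\rho_\lambda:=(\lambda/\theta)^{1/(\alpha-r)}<\rho$. The expectation is then split into three parts: a \emph{good} set $\{L^r+R^r\ge1+\delta^r,\ \min(L,R)\rho\ge\rho_\lambda\}$ of positive probability, on which one gains a multiplicative factor $1-c\lambda$; a \emph{bad} set $\{\min(L,R)|\xi|<\rho_\lambda\}$, on which one loses at most a factor $1+C\lambda\rho_\lambda^r$; and the remainder, on which the bound holds exactly. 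Since $\rho_\lambda^r\to0$ as $\lambda\to0$ while the good-set probability does not, the gain dominates the loss for $\lambda$ small enough, and the induction closes. The essential point you are missing is that one must exploit the \emph{strict} inequality $P\{L^r+R^r>1\}>0$ (available after possibly decreasing $r$, since the law of $(L,R)$ cannot concentrate on $\{(0,1),(1,0)\}$), not merely $L^r+R^r\ge1$.
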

\begin{proof}
  Since the density $f_0$ has finite Linnik--Fisher information by hypothesis (H2),
  it follows that
  \begin{align*}
    |\phi_0(\xi)| \leq \Big( \int_\RE |\zeta|^2 |\fh(\zeta)|^2\,d\zeta \Big)
    |\xi|^{-1} 
  \end{align*}
  for all $\xi\in\RE$, 
  where $h=\sqrt{f_0}$  and  $\fh$ is its Fourier transform.
  See Lemma 2.3 in \cite{CarlenGabettaToscani}. 
  For any sufficiently small $\lambda>0$, one concludes
  \begin{align}
    \label{eq.f0tail}
    |\phi_0(\xi)| \leq ( 1 + \lambda |\xi|^r)^{-1/ r}
  \end{align}
  for sufficiently large $|\xi|$.

  Next, recall that the modulus of the characteristic function 
  of a probability density is continuous and bounded away from one, 
  locally uniformly in $\xi$ on $\RE\setminus\{0\}$.
  Diminishing the $\lambda > 0 $ in \eqref{eq.f0tail} if necessary,
  this estimate actually holds for $|\xi|\geq\rho$.

  Thus, the claim \eqref{eq.tailinduce} is proven for $\ell=0$.
  To proceed by induction, 
  fix $n\geq0$ and assume that \eqref{eq.tailinduce} holds for all $\ell\leq n$.
  In the following, we shall conclude \eqref{eq.tailinduce} for $\ell=n+1$.

  Recall that $r<\alpha$ in hypothesis (H1); see Remark \ref{rem3}.
  Hence, defining
  \begin{align}
    \label{eq.rhofromlambda}
    \rho_\lambda=(\lambda/\theta)^{1/(\alpha-r)} ,
  \end{align}
  it follows that
  \begin{align*}
    (1+\theta|\xi|^\alpha)^{-1/r} \leq (1+\lambda|\xi|^r)^{-1/r}
  \end{align*}
  if $|\xi|\geq\rho_\lambda$.
  Taking into account Lemma \ref{lem.head},
  estimate \eqref{eq.f0tail} for $\ell\leq n$ extends to all $|\xi|\geq\rho_\lambda$.
  We assume $\rho_\lambda<\rho$ from now on,
  which is equivalent to saying that $0<\lambda<\lambda_0:=\theta\rho^{\alpha-r}$.

  With these notations at hand, 
  introduce the following ``good'' set:
  \begin{align*}
    M^G_{\lambda,\delta} := \big\{ \omega: L^r(\omega)+R^r(\omega) \geq 1+\delta^r \mbox{ and } 
    \min(L(\omega),R(\omega))\rho\geq\rho_\lambda \big\} ,
  \end{align*}
  depending on $\lambda$ and a parameter $\delta>0$.
  We are going to show that if $\delta>0$ and $\lambda>0$ are sufficiently small, 
  then $M^G_{\lambda,\delta}$ has positive probability.
  First observe that the law of $(L,R)$ cannot be concentrated in the two point set $\{(0,1),(1,0)\}$
  because $\QQ(\gamma)<0$ by the hypotheses of Theorem \ref{thm.strongnew}.
  Hence we can assume $P\{ L^r+R^r>1 \}>0$, possibly after diminishing $r>0$
  (recall that if (H1) holds for some $r>0$, then it also holds for all smaller $r'>0$ as well).
  Moreover, notice that $L^r+R^r>1$ and $L=0$ or $R=0$ implies $L^\a+R^\a>1$.
  But since $\E[L^\a+R^\a]=1$, it follows that $P\{ L>0, R>0, L^r+R^r>1\}>0$.
  In conclusion, the countable union of sets
  \begin{align*}
    \bigcup_{k=1}^\infty M^G_{\lambda_0/k,1/k} 
    = \big\{\omega : L^r(\omega)+R^r(\omega) > 1, \, L(\omega)>0,\,R(\omega)>0 \big\}
  \end{align*}
  has positive probability, and so has one of the components $M^G_{\lambda_0/k,1/k}$.

  Also, we introduce a ``bad'' set, that depends on $\lambda$ and $\xi$,
  \begin{align*}
    M^B_{\lambda,\xi} := \big\{ \omega: \min(L(\omega),R(\omega))|\xi| < \rho_\lambda \big\} .
  \end{align*}
  Notice that $M^G_{\lambda,\delta}$ and $M^B_{\lambda,\xi}$ are disjoint provided $|\xi|\geq\rho$.
  
  We are now ready to carry out the induction proof, for a given $\lambda$ small enough.
  Fix $j\leq n$ and some $|\xi|\geq\rho$.
  We prove that 
  \begin{align}
    \label{eq.ptwinduce}
    \widehat{Q}^+[\qj,\qnj](\xi) \leq \E[ |\qj(L\xi)| |\qnj(R\xi)| ] \leq (1+\lambda|\xi|^r)^{-1/r} .
  \end{align}
  We distinguish several cases.
  If $\omega$ does {\em not} belong to the bad set $M^B_{\lambda,\xi}$,
  then $L|\xi|\geq\rho_\lambda$ and $R|\xi|\geq\rho_\lambda$ so that by induction hypothesis
  \begin{align*}
    |\qj(L\xi)| |\qnj(R\xi)| & \leq \big( (1+\lambda L^r|\xi|^r)(q+\lambda R^r|\xi|^r) \big)^{-1/r} \\ 
    & \leq \big( 1+ \lambda(L^r+R^r)|\xi|^r \big)^{-1/r} \leq (1+\lambda|\xi|^r)^{-1/r};
  \end{align*}
  indeed, recall that $L^r+R^r\geq1$ because of (H1).
  In particular, if $\omega$ belongs to the good set $M^G_{\lambda,\delta}$, 
  then the previous estimate improves as follows,
  \begin{align*}
    |\qj(L\xi)| |\qnj(R\xi)| 
    \leq \big( 1+\lambda(1+\delta^r)|\xi|^r \big)^{-1/r} 
    \leq \Big( \frac{1+\lambda\rho^r}{1+\lambda(1+\delta^r)\rho^r} \Big)^{1/r} ( 1+\lambda|\xi|^r )^{-1/r} ,
  \end{align*}
  where we have used that $|\xi|\geq\rho$.
  Notice further that there exists some $c>0$ --- 
  depending on $\delta$, $\theta$, $\lm_0$, $\rho$ and $r$, but not on $\lambda$ ---
  such that for all sufficiently small $\lambda>0$,
  \begin{align*}
    \Big( \frac{1+\lambda\rho^r}{1+\lambda(1+\delta^r)\rho^r} \Big)^{1/r} \leq 1 - c\lambda .
  \end{align*}
  Finally, suppose that $\omega$ is a point in the bad set $M^B_{\lambda,\xi}$,
  and assume without loss of generality that $L\geq R$.
  Then $L^r|\xi|^r\geq(1-R^r)|\xi|^r\geq|\xi|^r-\rho_\lambda^r$, and so, 
  for sufficiently small $\lambda$ and for any $\xi \geq \rho$,  
  \begin{align*}
    |\qj(L\xi)| |\qnj(R\xi)|
    \leq (1+\lambda L^r|\xi|^r )^{-1/r} \leq ( 1+\lambda|\xi|^r -\lambda\rho_\lambda^r )^{-1/r} 
    \leq (1+\lambda\rho_\lambda^r)^{1/r} (1+\lambda|\xi|^r)^{-1/r} .
  \end{align*}
  Again, there exists a $\lambda$-independent constant $C$ such that,
  for all sufficiently small $\lambda>0$,
  \begin{align*}
    (1+\lambda\rho_\lambda^r)^{1/r} \leq 1 + C\lambda\rho_\lambda^r .
  \end{align*}
  Putting the estimates obtained in the three cases together,
  one obtains
  \begin{align*}
    & \E[ |\qj(L\xi)| |\qnj(R\xi)| ] \\
    & \quad \leq (1+\lambda|\xi|^r)^{-1/r} \big[ \big(1-P(M^G_{\lambda,\delta})-P(M^B_{\lambda,\xi})\big) + P(M^G_{\lambda,\delta})(1-c\lambda) + P(M^B_{\lambda,\xi})(1+C\lambda\rho_\lambda^r) \big] \\
    & \quad \leq (1+\lambda|\xi|^r)^{-1/r} \big[ 1 + \lambda(C\rho_\lambda^r - cP\big(M^G_{\lambda,\delta})\big) \big].
  \end{align*}
  Notice that we have used the trivial estimate $P(M^B_{\lambda,\xi})\leq1$ in the last step,
  which eliminates any dependence of the term in the square brackets on $\xi$.
  To conclude \eqref{eq.ptwinduce}, it sufficies to observe that
  as $\lambda$ decreases to zero, 
  $\rho_\lambda$ tends to zero monotonically by \eqref{eq.rhofromlambda},
  while the measure $P(M^G_{\lambda,\delta})$ is obviously non-decreasing and we have
  already proved that $P(M^G_{\lambda^*,\delta})>0$ for $\lambda^*$ and $\delta$ suitably chosen.
  Hence $C\rho_\lambda^r\leq cP(M^G_{\lambda,\delta})$ when $\lambda>0$ is small enough.
  From \eqref{eq.ptwinduce}, 
  it is immediate to conclude \eqref{eq.tailinduce}, 
  recalling the recursive definition of $\hat q_{n+1}$ in \eqref{eq.wildrec}.

  Thus, the induction is complete, and so is the proof of the lemma.
\end{proof}
\begin{proof}[Proof of Theorem \ref{thm.strongnew}]
  The key step is to prove convergence 
  of the characteristic functions $\phi(t)\to\phi_\infty$ in $L^2(\RE)$.
  To this end, observe that the uniform bound on $\qn$ obtained in Lemma \ref{lem.tail} above
  directly carries over to the Wild sum,
  \begin{align*}
    |\phi(t;\xi)| \leq e^{-t} \sum_{n=0}^\infty (1-e^{-t})^n 
  |\qn(\xi)| \leq (1+\lambda|\xi|^r)^{-1/r} \qquad (|\xi| \geq \rho).
  \end{align*}
  Moreover, since  $\lim_{t \to +\infty}\phi(t;\xi)=\phi_\infty(\xi)$   for every $\xi\in\RE$,
  also
  \begin{align*}
    |\phi_\infty(\xi)| \leq (1+\lambda|\xi|^r)^{-1/r} \qquad (|\xi| \geq \rho).
  \end{align*}
  Let $\epsilon>0$ be given.
  Then there exists a $\Xi \geq \rho $ such that
  \begin{align*}
    \int_{|\xi|\geq\Xi} |\phi(t;\xi)-\phi_\infty(\xi)|^2\,d\xi 
    &\leq 2 \int_{|\xi|\geq\Xi} \big( |\phi(t;\xi)|^2 + |\phi_\infty(\xi)|^2 \big) \,d\xi \\
    &\leq 4 \int_\Xi^\infty (1+\lambda|\xi|^r)^{-2/r}\,d\xi \leq \frac\epsilon2 .
  \end{align*}
  On the other hand, by weak convergence of $V_t$ to $V_\infty$, $\phi(t;\cdot)$
converges to $\phi_\infty$  uniformly on every compact set of $\RE$ as $t \to +\infty$, 
hence   there exists a time $T>0$ such that
  \begin{align*}
    |\phi(t;\xi)-\phi_\infty(\xi)|^2 \leq \frac{\epsilon}{4\Xi}
  \end{align*}
   for every $|\xi|\leq\Xi$ and $t\geq T$.
  In combination, it follows that
  \begin{align*}
    \| \phi(t) - \phi_\infty \|_{L^2}^2 \leq \epsilon 
  \end{align*}
  for all $t\geq T$.
  Since $\epsilon>0$ has been arbitrary,  convergence of $\phi(t)$ to $\phi_\infty$ in $L^2(\RE)$ 
   follows.
  By Plancherel's identity, this immediately implies strong convergence 
  of the densities $f(t)$ of $V_t$ to the density $f_\infty$ of $V_\infty$ in $L^2$.

  Convergence in $L^1(\RE)$ is obtained by interpolation between weak and $L^2(\RE)$ convergence.
  Let $\epsilon>0$ be given, and choose $M>0$ such that
  \begin{align*}
    \int_{|x|\geq M} f_\infty(x)\,dx < \frac\epsilon4.
  \end{align*}
  By weak convergence of $V_t$ to $V_\infty$ 
  there exists a $T>0$ such that
  \begin{align*}
    \int_{|x|\geq M} f(t;x)\,dx < \frac\epsilon2
  \end{align*}
  for all $t\geq T$.
  Now Hölder's inequality implies
  \begin{align*}
    \int_\RE |f(t;x)-f_\infty(x)|\,dx 
    &\leq (2M)^{1/2} \Big( \int_{|x| \leq M} |f(t;x)-f_\infty(x)|^2\,dx \Big)^{1/2} \\
    & \quad + \int_{|x|>M} \big(|f(t;x)|+|f_\infty(x)|\big) \,dx \\
    &< (2M)^{1/2} \| f(t)-f_\infty \|_{L^2} + \frac{3\epsilon}{4} 
  \end{align*}
  Increasing $T$ sufficiently, the last sum is less than $\epsilon$ for $t\geq T$.

  Finally, convergence in $L^p(\RE)$ with $1<p<2$ follows 
  by interpolation between convergence in $L^1(\RE)$ and in $L^2(\RE)$.
\end{proof}

 \section*{Acknowledgements} 
F.B.'s research was partially supported by Ministero 
dell'Istruzione, dell'Università e della Ricerca
(MIUR grant 2006/134526).
D.M. acknowledges support from the Italian MIUR, project
``Kinetic and hydrodynamic equations of complex collisional
systems'', and from the Deutsche Forschungsgemeinschaft,
grant JU 359/7. He thanks the Department of Mathematics of
the University of Pavia, where a part of this research has been
carried out, for the kind hospitality.

All three authors would like to thank E.Regazzini
for his time, comments and suggestions.

\end{document}